\documentclass[11pt,a4paper,reqno]{amsart}
\usepackage{amsmath}
\usepackage{amsfonts}
\usepackage[utf8]{inputenc}
\usepackage[english]{varioref}
\usepackage{dcolumn}
\usepackage[height=22cm , width = 16cm , top = 4cm , left = 3cm, a4paper]{geometry}
\usepackage[final]{graphicx}
\usepackage{epsfig}
\usepackage{pstricks}
\usepackage{psfrag}
\usepackage{booktabs}
\usepackage{delarray}
\usepackage{amssymb}
\usepackage{rotating}
\usepackage{subfigure}
\usepackage{layout}
\usepackage{amsthm}
\usepackage{hyperref}
\usepackage{xcolor}
\usepackage{setspace}
\usepackage{colortbl}
\usepackage[format=hang]{caption}
\usepackage{caption}
\usepackage{epsfig}
\usepackage{epstopdf}
\usepackage{booktabs}
\usepackage{siunitx}
\usepackage[noadjust]{cite}
\textwidth 18cm \textheight 24cm
\topmargin -0.50cm
\oddsidemargin -1.0cm
\evensidemargin -1.0cm
\setlength{\parindent}{1em}
\setlength{\parskip}{1em}

\newtheorem{theorem}{Theorem}
{
{\newcommand{\pd}{\partial}
{
{

\newcommand{\et}{\textit{et al.} }

\newtheorem{thm}{Theorem}[section]

\newtheorem{rem}[thm]{Remark}

\numberwithin{equation}{section}
\newtheorem{example}{Example}[section]
\parindent 0pt

\title[AHPETM for Pure Breakage and Smoluchowski's Coagulation Equation]{\textbf{Elzaki Transform Based Accelerated Homotopy Perturbation Method for Multi-dimensional Smoluchowski's Coagulation and Coupled Coagulation-fragmentation Equations}}

\author[Arora, Kumar and Mammeri]{Gourav Arora$^\dag$, Rajesh Kumar$^{\dag}$ and Youcef Mammeri$^{\dag\dag}$}
\thanks{$^\dag$Department of Mathematics,
Birla Institute of Technology and Science, Pilani, Rajasthan-333031, India (p20190421@pilani.bits-pilani.ac.in).\\
$^\ast$Corresponding author: Department of Mathematics, Birla Institute of Technology and Science, Pilani, Rajasthan-333031, India (rajesh.kumar@pilani.bits-pilani.ac.in).\\
$^{\dag\dag}$Institut Camille Jordan CNRS UMR 5208, Université Jean Monnet, 42100 Saint-Etienne, France 
 (youcef.mammeri@u-picardie.fr).}
\begin{document}
\maketitle
%
	\begin{quote}
		{\textit{ Abstract: 
This article aims to establish a semi-analytical approach based on the homotopy perturbation method (HPM) to find the closed form or approximated solutions for the population balance equations such as Smoluchowski's coagulation, fragmentation, coupled coagulation-fragmentation and bivariate coagulation equations. An accelerated form of the HPM is combined with the Elzaki transformation to improve the accuracy and efficiency of the method. One of the significant advantages of the technique lies over the classic numerical methods as it allows solving the linear and non-linear differential equations without discretization. Further, it has benefits over the existing semi-analytical techniques such as Adomian decomposition method (ADM), optimized decomposition method (ODM), and homotopy analysis method (HAM) in the sense that computation of Adomian polynomials and convergence parameters are not required. The novelty of the scheme is shown by comparing the numerical findings with the existing results obtained via ADM, HPM, HAM and ODM for non-linear coagulation equation. This motivates us to extend the scheme for solving the other models mentioned above. The supremacy of the proposed scheme is demonstrated by taking several numerical examples for each problem. The error between exact and series solutions provided in graphs and tables show the accuracy and applicability of the method. In addition to this, convergence of the series solution is also the key attraction of the work.}}
	\end{quote}
		\textit{Keywords: Population Balance Equation; Aggregation Equation; Semi-analytical Technique; Elzaki Transformation; Accelerated Homotopy Perturbation Method; Series Solution; Convergence Analysis. }
		\section{Introduction}
		Particulate processes have drawn much attention of researchers because of their technological applications in many engineering and natural science disciplines, including granulation, crystallization, activated sludge flocculation, and raindrop generation \cite{briesen2006simulation,majumder2012lattice,nopens2005pbm,ramkrishna2000population,rhodes2008introduction}. The particle size distribution, which represents the amount of a specific size within the system, affects the behavior during processing and the final product's performance. During processing, distinct mechanisms like nucleation, breakage (fragmentation), aggregation (coagulation) or growth may occur. Breakage refers the phenomenon in which a particle divides into two or more particles, while aggregation refers to two particles merging to form a more extensive particle. Thus, the total number of particles increases during the breakage process, whereas it decreases in the aggregation phenomenon as time passes, but the mass remains conserved in both the situations. The scope of the article is limited to the pure breakage, aggregation in single and multi-dimensions as well as coupled aggregation-breakage equations.
The mathematical formulation of pure fragmentation equation \cite{batycky1997theoretical} is given by
\begin{align}\label{breakage_eqn}
\dfrac{\pd u(x,t)}{\pd t}=\int_{x}^{\infty}B(x,y)S(y)u(y,t)dy-S(x)u(x,t),
\end{align}
and the non-linear Smoluchowski's coagulation equation in $1-D$ is provided by, see \cite{yin2014numerical},
\begin{align}\label{agg_eqn}
\frac{\pd u(x,t)}{\pd t}= \frac{1}{2} \int_{0}^{x} K(x-y,y) u(x-y,t) u(y,t) dy-\int_{0}^{\infty} K(x,y) u(x,t) u(y,t) dy,
\end{align}
with the initial condition 
\begin{align}
u(x,0)= f(x).
\end{align}
Here, $u(x,t) \in (0,\infty) \times [0,T]$ represents the number of particles of size $x$ at time $t$, $B(x,y)$ gives the breakage function, i.e., the rate at which the particles of size $y$ break into particles of size $x$ and the rate at which a particle size $y$ is chosen to break is shown by the selection function $S(y)$. Further, the term  $K(x-y,y)$ denotes the rate at which particles of sizes $x-y$ and $y$ merge to form a particle of size $x$. In equations \eqref{breakage_eqn} and \eqref{agg_eqn}, the first integral terms provide the birth of a particle of size $x$ during the process of breakage and aggregation, respectively, while the second terms in both models indicate the death of particle size $x$.       
\par Along with the number density $u(x,t)$, some integral properties, such as moments, grab the attention because of their physical interpretation. The moments for the number density are defined as 
\begin{align*}
\mu_j(t)= \int_{0}^{\infty}x^j u(x,t) dx, \quad j=0,1,2,\cdots.
\end{align*}
The zeroth moment $\mu_0(t)$ defines the total number of particles in the system at time $t$, first moment $\mu_1(t)$ gives the total mass in system and $\mu_2(t)$ gives the energy dissipated by the system.
\par In solid processing, e.g., in foods and pharmaceuticals, product quality is characterized by multiple particle properties,
for example, the volume and composition of aggregating particles. To model such phenomenon, more then one dimensional is required. Therefore, in the following, the bivariate case is considered, i.e., particles (or individual objects) are characterized by two properties, named $x$ and $y$. The two dimensional aggregation is governed by
\begin{align}\label{2DPBE}
		\dfrac{\pd u(x,y,t)}{\pd t}=& \frac{1}{2} \int_{0}^{x}\int_{0}^{y}K(x-x',y-y',x',y')u(x-x',y-y',t)u(x',y',t)dx'dy' \nonumber \\
		-&\int_{0}^{\infty}\int_{0}^{\infty} K(x,x',y,y')u(x,y,t)u(x',y',t)dx'dy',
\end{align}
with the initial condition
\begin{align}
c(x,y,0)=c_0(x,y) \geq 0.
\end{align}
 Due to complexity of these models and unavailability of the analytical solutions (except some simple cases), several numerical and semi-analytical techniques are applied to solve these problems approximately. Numerical schemes to solve breakage equation \eqref{breakage_eqn} and/or coagulation model \eqref{agg_eqn} includes finite element method \cite{bie2018coupling}, quadrature method of moments \cite{marchisio2003quadrature,su2007solution}, finite volume scheme \cite{singh2019new,singh2022finite,filbet2004mass,kumar2014convergence}, fixed pivot element \cite{giri2013convergence}, fast Fourier transformation method \cite{ahrens2018fft}, cell average technique \cite{kumar2006improved}  and reference therein. The drawbacks of the schemes are shown in the potential reliance of these numerical techniques on non-physical assumptions such as discretization, linearization, sets of basis functions, and many others. Recently, several authors have developed interest in semi-analytical approaches to overcome these shortcomings. These series solution techniques offer results without making such assumptions. Some of the available strategies are Taylor polynomial and radial basis functions \cite{ranjbar2010numerical}, Laplace-variational iteration method \cite{hammouch2010laplace}, ADM \cite{singh2015adomian}, HPM  \cite{kaur2019analytical}, optimal homotopy asymptotic method (OHAM) \cite{dutta2018population}, HAM \cite{kaur2022approximate} and ODM \cite{kaushik2022novel}. Interestingly, some of the algorithm provided the closed form series solutions of coagulation equation \eqref{agg_eqn} for the aggregation kernels
 $$K(x,y)=1,x+y  ,xy\text{ and } x^{\frac{2}{3}}+y^{\frac{2}{3}}, $$
with exponential initial condition ($u(x,0)=e^{-x}, e^{-x}/x$), see \cite{singh2015adomian,kaur2019analytical,kaur2022approximate} for more detailed computations. They also dealt with the breakage equation \eqref{breakage_eqn} with the breakage rate $$b(x,y)=\frac{\alpha}{y}\left(\frac{x}{y}\right)^{\alpha-2} \quad \forall\quad  1\leq \alpha \leq 2 \text{ with selection rate } S(x)=x^\alpha$$
having the exponential ($e^{-x}$) and mono disperse $(\delta(x-a))$ being the two different initial conditions. Hammouch and Mekkaoui in \cite{hammouch2010laplace} developed the Laplace-variational iteration method for solving the coagulation equation \eqref{agg_eqn} only for two cases of aggregation kernels, constant ($K(x,y)=1$) and product ($K(x,y)=xy$). Moreover, Hasseine \et in \cite{hasseine2015two} employed ADM and HPM to solve the breakage equation for the kernel $B(x,y)=\frac{12(y-x)}{x^3}$ with selection function $S(x)=x$. Very recently in \cite{kaushik2022novel}, ODM is implemented to solve the coagulation equation using the parameters $$K(x,y)=1,x+y \text{ and } xy \text{ with } u(x,0)=e^{-x}.$$  
\par In the literature, it was observed that ADM, HPM, and HAM provide the closed form solutions, but some drawbacks are observed in these techniques. In \cite{ganji2006application,dehghan2015convection}, it was found that a large number of iterations are required to obtain a more accurate approximation. When dealing with chaotic systems, Chowdhury and Hashimstill \cite{he2005application} found that time, time step, and the number of terms must be handled with extreme caution. Further, Obidat \cite{odibat2020optimized} has drawn attention to various drawbacks of ADM, including its delayed convergence \cite{jiao2002aftertreatment} and inability to handle boundary conditions \cite{he1997new} for solving non-linear PDEs. These shortcoming were avoided by Obidat in \cite{odibat2020optimized}. To overcome these issues, recently, ODM \cite{kaushik2022novel} has been implemented to solve the model, but the accuracy is still maintained only for a small period of time. Recently, HPM is accelerated by approximating the nonlinear term and incorporating the Elzaki transformation for differential equations \cite{jasrotia2022accelerated} in order to improve the accuracy of the truncated solution. Thus, the first aim of this article is to obtain more accurate solutions to the pure breakage and Smoluchowski's coagulation equations by applying the accelerated homotopy perturbation Elzaki transformation method \cite{jasrotia2022accelerated}.
\par For the second task of this work, combined aggregation-breakage equation is considered which is an intriguing issue for academics. The problem was resolved using a class of numerical or stochastic methods. Lee and Matsoukas \cite{lee2000simultaneous} employed a stochastic process, namely the constant-N Monte Carlo method, to solve the aggregation with a binary breakage equation. In 2002, Mahoney \et used the finite element method for aggregation, growth, and nucleation equations \cite{mahoney2002efficient}. Further,  number density and moments were computed with the help of the method of moments by Madras \et in \cite{madras2004reversible}. The model \eqref{breakage_eqn} and \eqref{agg_eqn} was also solved by implementing the finite volume method for several test cases in \cite{kumar2009comparison,kumar2013moment,kumar2015development,bourgade2008convergence}. Since, numerical schemes have some limitations and till date, there is no literature on semi-analytical schemes for coupled aggregation-breakage model, here we implement the AHPETM for solving the combined equation for two test cases. Moving further, the analytical solutions for the bivariate aggregation equation are available for limited cases \cite{fernandez2007exact,gelbard1980simulation,leyvraz2003scaling,lushnikov2004sol}. Several numerical methods, such as moving sectional \cite{kim1990simulation}, finite difference \cite{mantzaris2001numerical}, Monte Carlo \cite{matsoukas2009bicomponent}, sectional quadrature \cite{attarakih2010multivariate}, dual quadrature \cite{favero2012dual}, finite volume schemes \cite{kaur2017weighted,singh2016improved}, and many more \cite{vale2005solution,kumar1995general,chaudhury2013extended,kumar2008cell}, are considered to solve the equation. Therefore, our third aim here is to fill this gap of series solution for finding the approximate results for bivariate aggregation PBE.

\par The article is organized as follows: Section 2 discusses a brief outline of the Elzaki transformation. In Section 3, the general methodology of HPM and AHPETM are presented. In Section 4, AHPETM is developed for aforementioned population balance equations. Further, Section 5 gives a detailed convergence analysis of the proposed iterative scheme. In Section 6,  the developed formulations are adopted to demonstrate solutions for several kernels and the supremacy of the scheme over HPM, ADM, HAM, and ODM solutions are shown by means of numerical simulations.

		\section{Elzaki Transformation and Its Properties}
		
Tarig Elzaki developed the Elzaki transformation in 2011 \cite{elzaki2011new,elzaki2011application,elzaki2011connections,elzaki2011elzaki}, which is the modification of the general Laplace and Sumudu transformations  to solve the differential equation in the time domain. In \cite{elzaki2011new,elzaki2011application}, authors show the efficiency and accuracy of the Elzaki transformation on a large class of differential and integral equations. To understand the definition of the transformation, consider a set
$$A=\left\{f(t):\exists M, k_1,k_2>0,|f(t)|<M e^{\frac{|t|}{k_j}}, \text{ if } t \in (-1)^j\times [0,\infty)\right\}$$
then the Elzaki transformation is defined as 
$$E[f(t)]= T[v]= v \int_{0}^{\infty} f(t) e^{-\frac{t}{v}}dt, \quad t >0,$$ 
and the inverse of Elzaki transformation \cite{elzaki2012new} is defined as 
$$E^{-1}[T[v]]=\frac{1}{2 \pi i} \int_{0}^{\infty} e^{tv}T\left[\frac{1}{v}\right]v dv.$$
Some of the Elzaki transformation for standard functions are listed in TABLE \ref{p}.
\begin{table}[h]
\begin{center}
\caption{Properties of Elzaki transformation}
\label{p}
\begin{tabular}{c  c} \toprule
    {$f(t)$} & {$E[f(t)]$} \\\midrule
    1 & {$v^2$}\\
    &\\
    {$t^n$} & {$n! v^{n+2}$}\\
        &\\
    {$e^{at}$} & {$\dfrac{v^2}{1-a v}$}\\
        &\\
    {$E[f(t)+g(t)]$} & {$E[f(t)]+E[g(t)]$}\\
        &\\
    {$E[f^n(t)]$} & {$\quad \quad \frac{T[v]}{v^n}-\sum_{k=0}^{n-1} v^{2-n+k}f^k(0), \quad n \geq 1 $}\\
    
      \bottomrule
\end{tabular}
\end{center}
\end{table}
		\section{Methodology}
		In this section, we review the basics of HPM and AHPETM for solving general differential equations. Then the schemes are applied to solve multi-dimensional coagulation and coupled coagulation-fragmentation equations.
\subsection{Review of HPM}\label{sec31}
Let us consider the general differential equation
\begin{align}\label{gde}
D(c)-h(x)=0, \quad x \in \Omega
\end{align}
with the boundary conditions
\begin{align}
B\left(c,\frac{\pd c}{ \pd n}\right)=0, r \in \pd \Omega,
\end{align}
where $D$ and $B$ are the differential and boundary operators, respectively.  One can usually decompose the differential operator into linear ($L$) and non-linear ($N$) operators, implying that equation \eqref{gde} becomes
\begin{align}
L(c)+N(c)-h(x)=0.
\end{align}  
Now, according to HPM, a homotopy $H:\Omega\times [0,1]\rightarrow \mathbb{R}$ is constructed that satisfies
\begin{align}\label{homotopy}
H[v(x,p)]= (1-p)[L[v(r,p)]-L[(c_0)]]+p[D[v(r,p)]-h(x)]=0,
\end{align}
where $c_0$ is the initial guess for the equation \eqref{gde} and $p$ is the embedding parameter that increases monotonically from 0 to 1.
According to the HPM, we can write the solution of the equation \eqref{gde} in the form of series as 
\begin{align}\label{series_solution}
v= \sum_{k=0}^{\infty} p^k v_k= v_0+p v_1+p^2 v_2+\cdots.
\end{align}
Substituting equation \eqref{series_solution} in \eqref{homotopy} and letting $p \rightarrow 1$, the solution is obtained as follows
\begin{align}
c=\lim\limits_{p \rightarrow 1} v= \sum_{k=0}^{\infty} v_k.
\end{align}
\subsection{Accelerated Homotopy Perturbation Elzaki Transformation Method (AHPETM)}\label{sec32}
Consider a non-linear differential equation 
\begin{align}\label{gde1}
\frac{\pd^n c}{\pd t^n}+L[c(x,t)]+N[c(x,t)]= b(x)
\end{align}
with the initial conditions $ c^i(x,0)=g_i(x), \quad i=0,1,2,\cdots,n-1$, where $c^i(x,t)$ denotes the $i^{th}$ order derivative of $c(x,t)$ with respect to $t$.
Taking Elzaki transformation  and using its properties on equation \eqref{gde1} provide, by following \cite{jasrotia2022accelerated},
\begin{align}\label{transformed}
E[c(x,t)]=\sum_{k=0}^{n-1}v^{k+2}c^k(x,0)+ v^n E [b(x)-L[c(x,t)]-N[c(x,t)]].
\end{align}
Now, applying the homotopy perturbation method to the equation \eqref{transformed}, we get
\begin{align}\label{2}
(1-p) (E[c(x,t)]-E[c(x,0)])+p\left(E[c(x,t)]-\sum_{k=0}^{n-1}v^{k+2}c^k(x,0)- v^n E [b(x)-L[c(x,t)]-N[c(x,t)]]\right)=0.
\end{align}
Let the unknown function $c(x,t)$ and non-linear operator $N[c(x,t)]$ can be written in series form as
\begin{align}\label{series_solution1}
c(x,t)&=\sum_{n=0}^{\infty} v_n p^n
\end{align}
and 
\begin{align}\label{non_linear}
N[c(x,t)]=\sum_{n=0}^{\infty} H_n p^n
\end{align}
where $H_n$ represents the accelerated He's polynomial with
\begin{align}
H_n(x,t)=N(\sum_{i=0}^{n} v_i)-\sum_{i=0}^{n-1}H_i, \text{ for } n\geq 1 \text{ and } H_0= N(v_0).
\end{align}
Substituting the values of $c(x,t)$ and $N[c(x,t)]$ from the equations \eqref{series_solution1} and \eqref{non_linear} into equation \eqref{2} give
\begin{align*}
E[\sum_{n=0}^{\infty} v_n p^n]= \sum_{k=0}^{n-1}v^{k+2}c^k(x,0)+p\left\{v^nE\left[g(x)-L[\sum_{n=0}^{\infty}v_n p^k]+\sum_{n=0}^{\infty}H_n p^n\right]\right\}.
\end{align*}
Applying inverse Elzaki transformation and comparing the coefficients of powers of $p$, the components of series solution, i.e., $v_i's$ are given in TABLE \ref{t2} 
\begin{table}[h]
\begin{center}
\caption{Components of series solution}
\label{t2}
\begin{tabular}{c|  c} \toprule
    {$v_0$} & {$c(x,0)$} \\
        &\\
    {$v_1$} & {$\quad \sum_{k=1}^{n-1}\frac{t^k}{k!}c^k(x,0)+E^{-1}\{v^nE[b(x)-L[v_0]+H_0]\}$}\\
        &\\
    {$v_2$} & {$-E^{-1}\{v^nE[L[v_1]+H_1]\}$}\\
    
    {$\vdots$} & {$\vdots$}\\

    {$v_n$} & {$-E^{-1}\{v^nE[L[v_{n-1}]+H_{n-1}]\} $}\\
    
      \bottomrule
\end{tabular}
\end{center}
\end{table}  
and hence the solution of the equation \eqref{gde1} is obtained by taking $p\rightarrow 1$ in the equation \eqref{series_solution1}.
		\section{Development of Mathematical formation using AHPETM}
		In this AHPETM is extended to solve the Smoluchowski's coagulation, pure fragmentation, coupled coagulation-fragmentation and bivariate coagulation equations.
\subsection{Smoluchowski's Coagulation Equation (SCE)}
Consider the non-linear aggregation equation \eqref{agg_eqn} with initial condition $u(x,0)=u_0(x)$. Applying Elzaki transformation, an integral form is obtained as
\begin{align}\label{Eagg}
E[u(x,t)]= v^2 u(x,0)+ v E\left[\frac{1}{2}\int_{0}^{x}K(x-y,y) u(x-y,t) u(y,t)dy-\int_{0}^{\infty} K(x,y) u(x,t) u(y,t)dy\right].
\end{align}
In order to apply the scheme, compare equation \eqref{Eagg} with the transformed equation \eqref{transformed}, which provides $L[u]=0, \quad b(x)=0 $ and 
\begin{align}
 N[u]= -\frac{1}{2}\int_{0}^{x}K(x-y,y) u(x-y,t) u(y,t)dy+\int_{0}^{\infty} K(x,y) u(x,t) u(y,t)dy.
\end{align}
Now, applying the HPM on equation \eqref{Eagg} as defined in equation \eqref{2}, we get
\begin{align}\label{3}
(1-p) (E[c(x,t)]-E[c(x,0)])+p\bigg(E[c(x,t)]-v^2c(x,0)- v E \bigg[ \frac{1}{2}\int_{0}^{x}K(x-y,y) u(x-y,t) u(y,t)dy\nonumber\\-\int_{0}^{\infty} K(x,y) u(x,t) u(y,t)dy\bigg]\bigg)=0.
\end{align}
According to the methodology defined in Section \ref{sec32},  $c(x,t)=\sum_{n=0}^{\infty} v_n p^n $ and the non-linear operator  $N[u]=\sum_{n=0}^{\infty} H_np^n$, where $H_n$ for SCE is given by 
\begin{align}\label{H}
H_n=\frac{1}{2} \int_{0}^{x} K(x-y,y) \sum_{i=0}^{n} v_i(x-y,t) \sum_{i=0}^{n} v_i(y,t)dy-\int_{0}^{\infty} K(x,y) \sum_{i=0}^{n} v_i(x,t) \sum_{i=0}^{n} v_i(y,t)dy -\sum_{i=0}^{n-1}H_i, n \geq 1
\end{align}
with $H_0= N[v_0].$
Using the above defined decomposition in equation \eqref{3} and comparing the powers of $p$, the $n^{th}$ component of the series solution is
\begin{align}\label{agg_iterations}
v_{n+1}(x,t)= E^{-1}\bigg\{vE\bigg(\frac{1}{2}\int_{0}^{x}K(x-y,y)\sum_{i=0}^{n}v_i(x-y,t)\sum_{i=0}^{n}v_i(y,t)dy\nonumber\\-\int_{0}^{\infty}K(x,y)\sum_{i=0}^{n}v_i(x,t)\sum_{i=0}^{n}v_i(y,t)dy\bigg)-\sum_{i=0}^{n}H_i\bigg\}
\end{align}
where $v_0(x,t)=u(x,0)$ and hence, the $n$ term truncated series solution is calculated by 
\begin{align}\label{agg_trn}
\Psi_n^{CE}(x,t):= \sum_{j=0}^{n}v_j(x,t).
\end{align}
\subsection{Fragmentation Equation (FE)}
Considering the pure fragmentation equation \eqref{breakage_eqn} and applying Elzaki transformation, the following integral operator form is achieved
\begin{align}\label{breakage_transformed}
E[u(x,t)]=v^2 u(x,0)+E\left(\int_{x}^{\infty} B(x,y) S(y) u(y,t)dy-S(x)u(x,t)\right).
\end{align}
Next, equation \eqref{breakage_transformed} is compared with the equation \eqref{transformed} for the implementation of AHPETM. It is observed that for the case of pure breakage equation $N[u(x,t)]=b(x)=0$ and
$$L[u(x,t)]=-\int_{x}^{\infty} B(x,y) S(y) u(y,t)dy+S(x)u(x,t).$$ 
By following the steps discussed in the previous Section \ref{sec32}, a homotopy is generated as follows
\begin{align}\label{4}
(1-p)\{E[u(x,t)]-E[u(x,0)]\}+p\left(E[u(x,t)]-v^2u(x,0)-vE\left[\int_{x}^{\infty} B(x,y)S(y)u(y,t)dy-S(x)u(x,t)\right]\right).
\end{align}
According to the proposed method, AHPETM introduces the solution of unknown function $u(x,t)$ in the form of infinite series as $u(x,t)=\sum_{j=0}^{\infty}v_j(x,t)$. Substituting this into equation \eqref{4} and comparing the coefficients of the power of $p$, provide the iterations for the solution as follows
\begin{align}\label{breakage_iterations}
v_{n+1}(x,t)= E^{-1}\left\{vE\left[\int_{x}^{\infty}B(x,y)S(y)v_n(x,t)dy-S(x)v_n(x,t)\right]\right\}
\end{align}
where $v_0(x,t)=u(x,0)$ and the $n$ term truncated solution will be provided as 
\begin{align}\label{FE_trn}
\Psi_n^{FE}(x,t):= \sum_{j=0}^{n}v_j(x,t).
\end{align}
\subsection{Coupled Coagulation-fragmentation Equation (CCFE)} The CCFE is governed by 
\begin{align}\label{agg_break_eqn}
\frac{\pd u(x,t)}{\pd t}= \frac{1}{2} \int_{0}^{x} K(x-y,y) u(x-y,t) u(y,t) dy-\int_{0}^{\infty} K(x,y) u(x,t) u(y,t) dy\nonumber\\+\int_{x}^{\infty}B(x,y)S(y)u(y,t)dy-S(x)u(x,t).
\end{align}
Applying Elzaki transformation on both sides leads to
\begin{align}\label{Eaggbrk}
E[u(x,t)]= v^2 u(x,0)+ v E\bigg[\frac{1}{2}\int_{0}^{x}K(x-y,y) u(x-y,t) u(y,t)dy-\int_{0}^{\infty} K(x,y) u(x,t) u(y,t)dy\nonumber\\+\int_{x}^{\infty} B(x,y) S(y) u(y,t)dy-S(x)u(x,t)\bigg].
\end{align}
For the implementation of AHPETM, expression \eqref{Eaggbrk} is compared with \eqref{transformed} and the following observations are made
$$b(x)=0, \quad L[u]=-\int_{x}^{\infty} B(x,y) S(y) u(y,t)dy+S(x)u(x,t),$$ and 
\begin{align*}
 N[u]= -\frac{1}{2}\int_{0}^{x}K(x-y,y) u(x-y,t) u(y,t)+\int_{0}^{\infty} K(x,y) u(x,t) u(y,t).
\end{align*}
Following the procedure defined in Section \ref{sec32}, the iterations to solve the equation \eqref{agg_break_eqn} are as follows
\begin{align}\label{agg_break_iterations}
v_{n+1}(x,t)=& E^{-1}\bigg\{vE\bigg(\frac{1}{2}\int_{0}^{x}K(x-y,y)\sum_{i=0}^{n}v_i(x-y,t)\sum_{i=0}^{n}v_i(y,t)dy\nonumber\\-\int_{0}^{\infty}K(x,y)&\sum_{i=0}^{n}v_i(x,t)\sum_{i=0}^{n}v_i(y,t)dy-\sum_{i=0}^{n}H_i+\int_{x}^{\infty}B(x,y)S(y)v_n(x,t)dy-S(x)v_n(x,t)\bigg)\bigg\},
\end{align}
where $v_0(x,t)=u(x,0).$ Let us denote the $n$ term approximated series solution for CCFE as 
\begin{align}\label{agg_break_sol}
\Psi_n^{CCFE}(x,t):= \sum_{j=0}^{n}v_j(x,t).
\end{align}
\subsection{Bivariate Smoluchowski's Coagulation Equation (BSCE)}
Consider 2D aggregation equation \eqref{2DPBE} with initial condition $u(x,y,0)=u_0(x,y)$ and applying Elzaki transformation, leads to form as
\begin{align}\label{E2dagg}
E[u(x,y,t)]=& v^2 u(x,y,0)+ v E\bigg[\frac{1}{2}\int_{0}^{x}\int_{0}^{y}K(x-x',y-y',x',y') u(x-x',y-y',t) u(x',y',t)dy'dx'\nonumber\\&-\int_{0}^{\infty}\int_{0}^{\infty} K(x,x',y,y') u(x,y,t) u(x',y',t)dy'dx'\bigg].
\end{align}
In order to apply the AHPETM, equation \eqref{E2dagg} is compared with the transformed equation \eqref{transformed}, implying that $L[u]=0, \quad b(x)=0 $ and 
\begin{align}
 N[u]= & -\frac{1}{2} \int_{0}^{x}\int_{0}^{y}K(x-x',y-y',x',y')u(x-x',y-y',t)u(x',y',t)dy'dx' \nonumber \\
 		+&\int_{0}^{\infty}\int_{0}^{\infty} K(x,x',y,y')u(x,y,t)u(x',y',t)dy'dx'.
\end{align}
Thanks to equation \eqref{2}, applying the HPM on equation \eqref{E2dagg} enables us to have
\begin{align}\label{5}
&(1-p) (E[c(x,t)]-E[c(x,0)])+p\bigg(E[c(x,y,t)]-v^2c(x,y,0)- v E \bigg[ \frac{1}{2}\int_{0}^{x}\int_{0}^{y}K(x-x',y-y',x',y')\nonumber\\& u(x-x',y-y',t) u(x',y',t)dy'dx'-\int_{0}^{\infty}\int_{0}^{\infty}K(x,x',y,y')u(x,y,t)u(x',y',t)dy'dx'\bigg]\bigg)=0.
\end{align}
Again, following the idea of Section \ref{sec32},  $u(x,y,t)=\sum_{n=0}^{\infty} v_n p^n $ and non-linear operator $N[u]=\sum_{n=0}^{\infty} H_np^n$ where $H_n$ is being given by 
\begin{align}
H_n=&\frac{1}{2} \int_{0}^{x} K(x-x',y-y',x',y') \sum_{i=0}^{n} v_i(x-x',y-y',t) \sum_{i=0}^{n} v_i(x',y',t)dy'dx'\nonumber\\&-\int_{0}^{\infty} K(x,x',y,y') \sum_{i=0}^{n} v_i(x,y,t) \sum_{i=0}^{n} v_i(x',y',t)dy'dx' -\sum_{i=0}^{n-1}H_i \text{ with } H_0=N[v_0].
\end{align}
Using the above defined decomposition in equation \eqref{5} and comparing the powers of $p$, we get the $n^{th}$ component of the series solution as follows
\begin{align}\label{2dagg_iterations}
v_{n+1}(x,y,t)=& E^{-1}\bigg\{vE\bigg(\frac{1}{2}\int_{0}^{x}\int_{0}^{y}K(x-x',y-y',x',y')\sum_{i=0}^{n}v_i(x-x',y-y',t)\sum_{i=0}^{n}v_i(x',y',t)dy'dx'\nonumber\\&-\int_{0}^{\infty}\int_{0}^{\infty}K(x,x',y,y')\sum_{i=0}^{n}v_i(x,y,t)\sum_{i=0}^{n}v_i(x',y',t)dy'dx'\bigg)\bigg\}
\end{align}
where $v_0(x,y,t)=u(x,y,0).$ Let us denote the $n$ term truncated solution by 
\begin{align}\label{2dagg_sol}
\Psi_n^{BSCE}(x,y,t):= \sum_{j=0}^{n}v_j(x,y,t).
\end{align}
		\section{Convergence Analysis}
		\subsection{Smoluchowski's Coagulation Equation}
Consider a Banach space $\mathbb{X}=\mathbb{C}( [0,T]:\mathbb{L}^1[0,\infty),\|.\|)$ over the norm defined as 
$$\|u\|=\sup_{s \in [0,t_0]}\int_{0}^{\infty} |u(x,s)|dx< \infty.$$
Let us use equation \eqref{Eagg} in the operator form as
\begin{align*}
u(x,t)=\tilde{\mathcal{N}}[u]
\end{align*}
where
\begin{align}\label{tN}
\tilde{\mathcal{N}}[u]=u(x,0)+E^{-1}\{vE[N[u]]\}
\end{align}
and $N[u]$ is given by
\begin{align*}
N[u]=\frac{1}{2}\int_{0}^{x} K(x-y,y) u(x-y,t) u(y,t)dy-\int_{0}^{\infty} K(x,y) u(x,t) u(y,t)dy.
\end{align*}
\begin{theorem}\label{tm0}
   Let us consider the coagulation equation \eqref{agg_eqn} with kernel $K(x,y)=1$ for all $x,y \in (0,\infty)$. If $v_i^s $ are the components of the series solution computed using \eqref{agg_iterations} and $\Psi_n^{CE}$ being the $n$ term truncated solution provided in equation \eqref{agg_trn}, then  $\Psi_n^{CE}$ converges to the exact solution $u$ with the error bound $$\|u-\Psi_m^{CE}\|\leq \dfrac{\Delta^m }{1-\Delta}\|v_1\|$$ where  $\Delta= t_0^2 e^{2 t_0 L}(\|u_0\|+2 t_0 L^2 + 2 t_0 L) < 1 \text{ and }  L= \|u_0\| (T+1)$.
\end{theorem}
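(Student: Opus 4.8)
The plan is to recognize the scheme as a Picard iteration for a contraction mapping and then to read off the geometric error bound. First I would observe that the operator $E^{-1}\{vE[\cdot]\}$ is nothing but integration in time: since the derivative property in TABLE~\ref{p} gives $E[\int_0^t f\,ds]=vE[f]$, one has $\tilde{\mathcal{N}}[u](x,t)=u(x,0)+\int_0^t N[u](x,s)\,ds$. Because the accelerated He polynomials telescope, $\sum_{i=0}^n H_i=N[\Psi_n^{CE}]$, the recursion \eqref{agg_iterations} rewrites as $v_{n+1}=E^{-1}\{vE[N[\Psi_n^{CE}]-N[\Psi_{n-1}^{CE}]]\}$, so the partial sums are exactly the successive iterates $\Psi_{n+1}^{CE}=\tilde{\mathcal{N}}[\Psi_n^{CE}]$ and $v_{n+1}=\tilde{\mathcal{N}}[\Psi_n^{CE}]-\tilde{\mathcal{N}}[\Psi_{n-1}^{CE}]$. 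Everything then reduces to showing that $\tilde{\mathcal{N}}$ is a contraction with constant $\Delta$, from which $\|v_{n+1}\|\le\Delta\|v_n\|$ follows at once.

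Next I would set up the two analytic ingredients behind $\Delta$. For $K\equiv1$ the basic tool is the $L^1$ convolution inequality: for the gain term $\int_0^\infty\big|\int_0^x f(x-y)g(y)\,dy\big|\,dx\le\|f\|\,\|g\|$, and for the loss term $\int_0^\infty\big|u(x)\int_0^\infty u(y)\,dy\big|\,dx\le\|u\|^2$, which together give $\|N[u]\|\le\tfrac32\|u\|^2$. Feeding this into $\|u(t)\|\le\|u_0\|+\int_0^t\|N[u](s)\|\,ds$ and applying a Gronwall argument on $[0,t_0]$ produces the exponential factor $e^{2t_0L}$ and justifies the crude a priori radius $L=\|u_0\|(T+1)$, on which the iterates $\Psi_n^{CE}$ are shown to remain bounded.

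The contraction estimate itself rests on the bilinearity of $N$. Writing $u*u-w*w=(u-w)*u+w*(u-w)$ for the gain part and $u\,\mu_0[u]-w\,\mu_0[w]=(u-w)\mu_0[u]+w(\mu_0[u]-\mu_0[w])$ for the loss part, together with $|\mu_0[u]-\mu_0[w]|\le\|u-w\|$, yields $\|N[u]-N[w]\|\le C(\|u\|,\|w\|)\,\|u-w\|$ with a constant linear in the norms. Bounding those norms by the a priori estimate, integrating once in time (a factor $t_0$) and combining with the exponential bound gives
\begin{align*}
\|\tilde{\mathcal{N}}[u]-\tilde{\mathcal{N}}[w]\|\le t_0^2\,e^{2t_0L}\big(\|u_0\|+2t_0L^2+2t_0L\big)\,\|u-w\|=\Delta\,\|u-w\|,
\end{align*}
which is a genuine contraction precisely under the standing hypothesis $\Delta<1$.

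Finally, the error bound is the standard geometric tail: from $\|v_{n+1}\|\le\Delta\|v_n\|$ one gets $\|v_n\|\le\Delta^{\,n-1}\|v_1\|$, hence $\|u-\Psi_m^{CE}\|=\big\|\sum_{j=m+1}^\infty v_j\big\|\le\sum_{j=m+1}^\infty\Delta^{\,j-1}\|v_1\|=\tfrac{\Delta^m}{1-\Delta}\|v_1\|$; the same estimate shows $\{\Psi_m^{CE}\}$ is Cauchy, so it converges to the unique fixed point $u=\tilde{\mathcal{N}}[u]$, i.e. the solution of \eqref{agg_eqn}. The hard part will not be the soft fixed-point structure but the constant bookkeeping: coaxing the Gronwall step to deliver exactly the factor $e^{2t_0L}$ and tracking the bilinear constants so that they assemble into the stated $\|u_0\|+2t_0L^2+2t_0L$, all while keeping the estimates uniform in $t\in[0,t_0]$ so that the supremum norm is controlled throughout.
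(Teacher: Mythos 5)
Your proposal has the same skeleton as the paper's proof --- identify $\tilde{\mathcal{N}}[u]=u_0+E^{-1}\{vE[N[u]]\}$ as a time-integrated Picard map, use the telescoping of the accelerated He polynomials to show $\Psi_{n}^{CE}=\tilde{\mathcal{N}}[\Psi_{n-1}^{CE}]$, and finish with the standard geometric tail --- but it replaces the paper's key lemma by a different one. The paper does not prove contractivity of $N$ directly: it passes to the equivalent exponential reformulation $\partial_t\bigl[u\,e^{H[x,t,u]}\bigr]=\tfrac12 e^{H}\int_0^x u(x-y)u(y)\,dy$ with $H=\int_0^t\int_0^\infty Ku\,dy\,ds$ and simply cites Singh et al.\ for the estimate $\|Nu-Nu^*\|\le\delta\|u-u^*\|$ with $\delta=t_0e^{2t_0L}(\|u_0\|+2t_0L^2+2t_0L)$; the Elzaki inversion step then contributes the second factor of $t_0$, giving $\Delta=\delta t_0$. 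You instead propose a self-contained bilinear estimate ($u*u-w*w=(u-w)*u+w*(u-w)$, plus the $L^1$ convolution inequality) together with a Gronwall a priori bound. That is a perfectly sound and arguably more transparent way to get a contraction and hence the convergence claim. The caveat is that your route, as sketched, will not land on the \emph{stated} constant: a direct Lipschitz bound on $N$ over the ball $\|u\|\le 2L$ gives a constant linear in $L$ with a single time integration, i.e.\ one power of $t_0$ and no exponential factor, whereas the specific combination $t_0^2e^{2t_0L}(\|u_0\|+2t_0L^2+2t_0L)$ is an artifact of the exponential reformulation (which is where $e^{2t_0L}$ and the first $t_0$ originate) composed with the Elzaki step (the second $t_0$). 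So you would prove the theorem with a different --- likely cleaner --- $\Delta$, and if you are required to reproduce the paper's $\Delta$ exactly you would have to redo the exponential-form computation rather than treat it as bookkeeping. The telescoping identity $\sum_{i=0}^{n}H_i=N[\Psi_n^{CE}]$ and the final chain $\|\Psi_n^{CE}-\Psi_m^{CE}\|\le\frac{\Delta^m}{1-\Delta}\|v_1\|$ are exactly as in the paper.
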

\begin{proof}
Two separate phases complete the theorem's proof. The contractive nature of the non-linear operator $\tilde{\mathcal{N}}$ is initially demonstrated. Then convergence of the truncated solution towards the exact one is established.\\
\textbf{Step 1:} As presented in \cite{singh2015adomian}, equation \eqref{tN} can be written in the equivalent form as
\begin{align*}
\frac{\pd }{\pd t}[u(x,t)\exp[H[x,t,u]]]=\frac{1}{2} \exp[H[x,t,u]]\int_{0}^{x} K(x-y,y)u(x-y,t)u(y,t)dy
\end{align*}
where $H[x,t,u]=\int_{0}^{t}\int_{0}^{\infty}K(x,y)u(y,s)dyds$. Thus the equivalent operator $\tilde{N}$ is given by
\begin{align*}
\tilde{N}[u]=u(x,0)\exp[-H(x,t,u)]+\frac{1}{2}\int_{0}^{t} \exp[H(x,s,u)-H(x,t,u)]\int_{0}^{\infty} K(x,y) u(x-y,s) u(y,s)dyds.
\end{align*}
Since $\tilde{N}$ is contractive (Singh \et established in  \cite{singh2015adomian}) and equivalent to $N[u]$, the non-linear operator $N[u]$ is also contractive, i.e.,
\begin{align}\label{cont_tN}
\|Nu-Nu^*\| \leq \delta \|u-u^*\| 
\end{align}
 where $\delta:= t_0e^{2 t_0 L}(\|u_0\|+ 2t_0 L^2+2 t_0 L) <1$  (for suitably chosen $t_0$) and $L= \|u_0\| (T+1)$.\\
Now, using the definition and basic properties of Elzaki and Laplace transformations as well as employing \eqref{cont_tN}, we get
\begin{align*}
\|\tilde{\mathcal{N}}u-\tilde{\mathcal{N}}u^*\|&= \|E^{-1}\{vE(N(u))\}-E^{-1}\{vE(N(u^*))\}\|\\
&= \left\|\frac{1}{2 \pi} \int_{0}^{\infty} \left(\frac{1}{v^2}\int_{0}^{\infty} (Nu-Nu^*)e^{-tv}dt\right)e^{tv}v dv\right\|\\ 
&\leq  \frac{1}{2 \pi} \int_{0}^{\infty} \left(\frac{1}{v}\int_{0}^{\infty} \delta\|u-u^*\|e^{-tv}dt\right)e^{tv} dv\\
&= \frac{1}{2 \pi}\int_{0}^{\infty} \frac{1}{v} \mathcal{L}(\delta \|u-u^*\|)e^{tv}dv\\
&= \mathcal{L}^{-1}\left\{\frac{1}{v^2}\mathcal{L}(\delta\|u-u^*\|)\right\}\leq\delta t_0\|u-u^*\| \text{ for a suitable } t_0.
\end{align*}
\textbf{Step 2:} Now, in this phase, an $n$ term truncated solution is computed using the iterations defined in \eqref{agg_iterations} and then error is estimated. Given that,
\begin{align*}
\Psi_n^{CE}=&\sum_{i=0}^{n}v_i(x,t)\\
=& u(x,0)+E^{-1}\{vE(N(u_0))\}+E^{-1}\{vE(N(u_0+u_1)-H_0)\}+\cdots+E^{-1}\{vE(N(\sum_{j=0}^{n-1}u_j)-\sum_{j=0}^{n-2}H_i)\}\\
=& u(x,0)+ E^{-1}\{v E(N[v_0]+N[v_0+v_1]+\cdots+N[v_0+v_1+\cdots+v_{n-1}]-\\&(H_0+(H_0+H_1)+\cdots+(H_0+H_1+\cdots+H_{n-2})))\}\\
=& u(x,0)+E^{-1}\{vE(N(\Psi_{n-1}^{CE}))\}= \mathcal{\tilde{N}}[\Psi_{n-1}^{CE}].
\end{align*}
Using the contractive mapping of $\mathcal{\tilde{N}}$ leads to
$$\|\Psi_{n+1}^{CE}-\Psi_n^{CE}\| \leq \Delta \|\Psi_n^{CE}-\Psi_{n-1}^{CE}\|.$$
and thus, we have
\begin{align*}
\|\Psi_{n+1}^{CE}-\Psi_n^{CE}\| & \leq \Delta\|\Psi_n^{CE}-\Psi_{n-1}^{CE}\| \leq \Delta^n\|\Psi_{1}^{CE}-\Psi_0^{CE}\|.
\end{align*}
Using the triangle inequality for all $n,m \in \mathbb{N}$ with $n>m$, we have 
\begin{align*}
\|\Psi_n^{CE}-\Psi_m^{CE}\| &\leq \|\Psi_n^{CE}-\Psi_{n-1}^{CE}\|+\|\Psi_{n-1}^{CE}-\Psi_{n-2}^{CE}\|+\cdots+\|\Psi_{m+1}^{CE}-\Psi_m^{CE}\|\\ & \leq (\Delta^{n-1}+\Delta^{n-2}+\cdots+\Delta^{m})\|\Psi_1^{CE}-\Psi_0^{CE}\|\\ &= \dfrac{\Delta^m (1-\Delta^{n-m})}{1-\Delta}\|u_1\| \leq \dfrac{\Delta^m}{1-\Delta}\|u_1\|,
\end{align*}
which converges to zero as $m\rightarrow \infty$, implies that there exists a $\Psi$ such that $\lim\limits_{n\rightarrow \infty}\Psi_n^{CE}=\Psi.$ Therefore,
\begin{align*}
u(x,t)=\sum_{i=0}^{\infty} v_i=\lim\limits_{n\rightarrow \infty}\Psi_n^{CE}=\Psi,
\end{align*}
which is the exact solution of the coagulation equation \eqref{agg_eqn}. The theoretical error is obtained by fixing $m$ and letting $n \rightarrow \infty$ in the above formulation.
\end{proof}
\subsection{Pure Breakage Equation}
Let $\mathbb{X}= \mathbb{C}([0,T]: \mathbb{L}^{1}[0,\infty),\|.\|])$ be a Banach space with the norm 
\begin{align}
\|u\|=\sup_{t \in [0,t_0]} \int_{0}^{\infty} e^{\lambda x} |u(x,t)| dx, \text{ where } \lambda >0.
\end{align}
Now, equation \eqref{breakage_eqn} can be rewritten in the operator form as 
$$u=\mathcal{\tilde{L}}[u]=u(x,0)+E^{-1}{vE(L[u])}$$ 
with $L[u]$ being the right-hand side of equation \eqref{breakage_eqn}.
\begin{theorem}
Let $\Psi_n^{FE}$ be the $n$ term truncated series solution of the fragmentation problem defined in equation \eqref{breakage_eqn}. Then $\Psi_n^{FE}$ converges to the exact solution and provides the error estimates
\begin{align}\label{ee}
\|u-\Psi_m^{FE}\|\leq \dfrac{\vartheta^m}{1-\vartheta}\|v_1\|,
\end{align}
if the following conditions hold
\begin{itemize}
\item[B1.] $B(x,y)=c\dfrac{x^{r-1}}{y^r}$ where $r=1,2,3,\cdots$ and $c$ is a positive constant satisfying $\int_{0}^{y} x B(x,y)dx=y,$
\item[B2.] $S(x)\leq x^k, \text{ where } k=1,2,3,\cdots,$ 
\item[B3.]$\lambda$ is chosen such that $e^{\lambda y}-1 <1,$
\item[B4.] $\vartheta:= \dfrac{k! (t_0)^2}{ \lambda^{k+1}}<1$ for suitable  $t_0$.
\end{itemize} 
\end{theorem}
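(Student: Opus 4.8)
The plan is to follow the two-step template of Theorem \ref{tm0}: first establish that the solution operator $\tilde{\mathcal{L}}$ is a contraction on the weighted space $\mathbb{X}$ with modulus $\vartheta$, and then deduce convergence of $\Psi_n^{FE}$ together with the error bound \eqref{ee} from a telescoping geometric-series argument. The structural simplification here is that $L$ is linear, so no accelerated He's polynomials enter and the bookkeeping of Step~2 is considerably lighter than in the coagulation case.

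For Step~1 I would use linearity to write $\tilde{\mathcal{L}}u-\tilde{\mathcal{L}}u^{*}=E^{-1}\{vE(L[w])\}$ with $w=u-u^{*}$, and then convert the Elzaki operator into a time integration exactly as in the proof of Theorem \ref{tm0}; concretely $E^{-1}\{vE(f)\}=\int_{0}^{t}f(\cdot,s)\,ds$, so the estimate reduces to controlling $\sup_{t\le t_{0}}\int_{0}^{t}(\int_{0}^{\infty}e^{\lambda x}|L[w](x,s)|\,dx)\,ds$. The crux is the weighted spatial bound on $L[w]$, which I would split into its birth and death parts. For the birth term I would apply Fubini over the triangle $\{0<x<y\}$, insert the explicit kernel of B1, and use the mass normalization $\int_{0}^{y}xB(x,y)\,dx=y$ (which fixes $c=r+1$) together with B3 to bound the inner integral $\int_{0}^{y}e^{\lambda x}x^{r-1}\,dx$. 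For the death term I would use B2 in the form $S(x)\le x^{k}$ together with the Gamma-type identity $\int_{0}^{\infty}x^{k}e^{-\lambda x}\,dx=k!/\lambda^{k+1}$, which is the origin of the factor $k!/\lambda^{k+1}$ appearing in $\vartheta$. Collecting the spatial constant with the powers of $t_{0}$ produced by the time integration (two factors of $t_{0}$, as in Theorem \ref{tm0}) then yields $\|\tilde{\mathcal{L}}u-\tilde{\mathcal{L}}u^{*}\|\le\vartheta\|u-u^{*}\|$ with $\vartheta=k!t_{0}^{2}/\lambda^{k+1}$, which is strictly less than $1$ precisely by B4.

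For Step~2 I would first check the fixed-point identity $\Psi_{n}^{FE}=\tilde{\mathcal{L}}[\Psi_{n-1}^{FE}]$. This drops out of summing the iteration \eqref{breakage_iterations}: since both $L$ and $E^{-1}\{vE(\cdot)\}$ are linear, the partial sum $\sum_{j=0}^{n-1}v_{j}=\Psi_{n-1}^{FE}$ can be pulled inside, giving $\Psi_{n}^{FE}=u(x,0)+E^{-1}\{vE(L[\Psi_{n-1}^{FE}])\}=\tilde{\mathcal{L}}[\Psi_{n-1}^{FE}]$. The contraction of Step~1 then gives $\|\Psi_{n+1}^{FE}-\Psi_{n}^{FE}\|\le\vartheta\|\Psi_{n}^{FE}-\Psi_{n-1}^{FE}\|\le\vartheta^{n}\|v_{1}\|$, and a triangle-inequality estimate of $\|\Psi_{n}^{FE}-\Psi_{m}^{FE}\|$ for $n>m$, summed as a geometric series, shows the sequence is Cauchy, hence convergent to a limit $\Psi=u$. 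Fixing $m$ and letting $n\to\infty$ produces the stated bound \eqref{ee}.

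I expect the main obstacle to be the weighted spatial estimate in Step~1, specifically reconciling the unbounded growth of the selection rate $S(x)\le x^{k}$ with the exponential weight $e^{\lambda x}$ built into the norm. This is exactly the point where all four hypotheses must conspire: B1 and the normalization control the birth term after the order swap, B3 tames the weight factor $e^{\lambda y}$ generated there, B2 together with the Gamma integral supplies the factor $k!/\lambda^{k+1}$ for the death term, and B4 is the final smallness condition forcing $\vartheta<1$. By contrast, once the contraction is secured, Step~2 is routine Banach fixed-point reasoning and requires no further structural input.
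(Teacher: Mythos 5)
Your proposal follows the paper's proof almost exactly: the same two-step contraction-plus-telescoping structure, the same inverse-Elzaki/Laplace manipulation to pass from the contractivity of $L$ to that of $\tilde{\mathcal{L}}$, the same fixed-point identity $\Psi_n^{FE}=\tilde{\mathcal{L}}[\Psi_{n-1}^{FE}]$ obtained from linearity, and the same geometric-series error bound. The only divergence is in Step 1: where you propose to derive the weighted contraction estimate for $L$ from scratch (Fubini over the triangle, $c=r+1$ from the normalization, B3 to tame the weight in the birth term, and the Gamma-type integral for the death term), the paper simply cites Theorem 2.1 of \cite{singh2015adomian} for $\|L[u]-L[u^*]\|\le \rho\|u-u^*\|$ with $\rho=k!\,t_0/\lambda^{k+1}$ and then picks up the second factor of $t_0$ from the inverse-Elzaki step. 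One bookkeeping caution for your self-contained version: the cited $\rho$ already carries one power of $t_0$ because Singh et al.\ estimate the time-integrated operator, so if your direct derivation yields a purely spatial constant $k!/\lambda^{k+1}$, the single time integration supplied by $E^{-1}\{vE(\cdot)\}$ contributes only one factor of $t_0$, not two; you would either end up with the sharper modulus $k!\,t_0/\lambda^{k+1}$ (and should restate B4 accordingly) or must reproduce the paper's accounting to land on $\vartheta=k!\,t_0^2/\lambda^{k+1}$. This discrepancy concerns only the precise form of the constant and does not affect Step 2, which is identical in both treatments.
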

\begin{proof}
Let us begin with the proof that the operator $\mathcal{\tilde{L}}$ is contractive. In order to do so, we use the fact that the operator $L[u]$ is a contractive operator under the assumptions mentioned in B1-B3 i.e.,, $\|L[u]-L[u^*]\|\leq \rho \|u-u^*\|$ where $\rho= \dfrac{k! t_0}{\lambda^{k+1}}< 1$ by following (\cite{singh2015adomian} Theorem 2.1). Now, thanks to Elzaki and Laplace transformations, one can write
\begin{align*}
\|\mathcal{L}[u]-\mathcal{L}[u^*]\|&= \|E^{-1}\{vE(L[u])\}-E^{-1}\{vE(L[u^*])\}\|\\
&=\left\|\frac{1}{2 \pi} \int_{0}^{\infty} \left(\frac{1}{v^2}\int_{0}^{\infty} (Lu-Lu^*)e^{-tv}dt\right)e^{tv}v dv\right\|\\ 
&\leq \frac{1}{2 \pi} \int_{0}^{\infty} \left(\frac{1}{v}\int_{0}^{\infty} \rho\|u-u^*\|e^{-tv}dt\right)e^{tv} dv\\
&= \frac{1}{2 \pi}\int_{0}^{\infty} \frac{1}{v} \mathcal{L}(\rho \|u-u^*\|)e^{tv}dv\\
&= \mathcal{L}^{-1}\left\{\frac{1}{v}\mathcal{L}(\rho\|u-u^*\|)\right\}\leq \vartheta\|u-u^*\| \text{ where } \vartheta= \rho t_0.
\end{align*}
 We proceed further to obtain the estimate \eqref{ee}. By using the iteration formula \eqref{breakage_iterations}, the $n$-term truncated solution is computed as
 \begin{align*}
 \Psi_n^{FE}=& E^{-1}\{vE[v_0]\}+E^{-1}\{vE[v_1]\}+\cdots+E^{-1}\{vE[v_{n-1}]\}\\
 =& E^{-1}\{vE[v_0+v_1+\cdots+v_{n-1}]\}= E^{-1}\{vE[\Psi_{n-1}^{FE}]\}.
 \end{align*}
Therefore, we have
 \begin{align*}
 \|\Psi_{n+1}^{FE}-\Psi_n^{FE}\| \leq \vartheta\|\Psi_n^{FE}-\Psi_{n-1}^{FE}\| \leq \vartheta^n \|\Psi_{1}^{FE}-\Psi_{0}^{FE}\|.
 \end{align*}
 The above can be used to establish the following, for all $n,m \in \mathbb{N}$ with $n>m$,
\begin{align*}
\|\Psi_n^{FE}-\Psi_m^{FE}\| &\leq \|\Psi_n^{FE}-\Psi_{n-1}^{FE}\|+\|\Psi_{n-1}^{FE}-\Psi_{n-2}^{FE}\|+\cdots+\|\Psi_{m+1}^{FE}-\Psi_m^{FE}\|\\& \leq (\vartheta^{n-1}+\vartheta^{n-2}+\cdots+\vartheta^{m})\|\Psi_1^{FE}-\Psi_0^{FE}\|\\ &= \dfrac{\vartheta^m (1-\vartheta^{n-m})}{1-\vartheta}\|v_1\| \leq \dfrac{\vartheta^m}{1-\vartheta}\|v_1\|.
\end{align*}
Thanks for hypothesis B4, the above tend to zero as $m\rightarrow \infty$ which means that there exists a $\Psi$ such that $\lim\limits_{n\rightarrow \infty}\Psi_n^{FE}=\Psi$. Thus, we obtain the exact solution of the breakage equation \eqref{breakage_eqn} as
\begin{align*}
u(x,t)=\sum_{i=0}^{\infty} v_i=\lim\limits_{n\rightarrow \infty}\Psi_n^{FE}=\Psi.
\end{align*}
\end{proof}
\subsection{Bivariate Smoluchowski's Coagulation Equation}
Consider a Banach space $X= \mathbb{C}([0,T]:L^1[0,\infty)\times L^1[0,\infty),\|.\|)$ with the enduced norm 
$$\|u\|=\sup_{s \in [0,t_0]} \int_{0}^{\infty}\int_{0}^{\infty}|u(x,y,s)|dx dy<\infty.$$
To demonstrate the convergence analysis, let us write the operator form of the equation \eqref{E2dagg} as
\begin{align}
u= \tilde{Q}[u],
\end{align}
where $\tilde{Q}$ is given by
\begin{align}\label{tQ}
\tilde{Q}[u]= u_0(x,y)+E^{-1}[v E[Q[u]]]
\end{align}
and 
\begin{align*}
Q[u]=&-\frac{1}{2} \int_{0}^{x}\int_{0}^{y}K(x-x',y-y',x',y')u(x-x',y-y',t)u(x',y',t)dy'dx'\nonumber\\&+\int_{0}^{\infty}\int_{0}^{\infty} K(x,x',y,y')u(x,y,t)u(x',y',t)dy'dx'.
\end{align*}
The iterative scheme's convergence concept is splitted into two components, firstly, we establish that the operator $\tilde{Q}$ is contractive (Theorem 3) and then proceed further to discuss the worst case upper bound for error (Theorem 4) below. To show the operator $\tilde{Q}$ contractive, initially we prove that $Q$ is contractive. To do so, an equivalent form of the equation \eqref{2DPBE} is taken as
\begin{align}
  \frac{\pd }{\pd t}[u(x,y,t)\exp[R(x,y,t,c)]]=\frac{1}{2} \exp[R(x,y,t,c)]\int_{0}^{x}\int_{0}^{y} K(x-x',x',y-y',y')u(x-x',y-y',t)u(x',y',t)dy'dx',
  \end{align} 
  where $R(x,y,t,c)=\int_{0}^{t}\int_{0}^{\infty}\int_{0}^{\infty}K(x,x',y,y')u(x',y',t)dx'dy'dt$.
  Thus the equivalent operator $\mathcal{N}$ is given by
  \begin{align}\label{2dequivalent}
  \mathcal{N}[u]=u(x,y,0)\exp[-R(x,y,t,u)]+\frac{1}{2}\int_{0}^{t}\exp[R(x,y,s,u)-R(x,y,t,u)]\nonumber\\\int_{0}^{x}\int_{0}^{y} K(x-x',x',y-y',y') u(x-x'y-y',s) u(x',y',s)dy'dx'ds.
  \end{align}
  Since, $\mathcal{N}$ and $Q$ are equivalent, it is sufficient to show that $\mathcal{N}$ is contractive.
  \begin{theorem}\label{tm1}
 The operator $\tilde{Q}$, defined in equation \eqref{tQ} is contractive for all $u, u^* \in \mathbb{X}$ if the following conditions
 \begin{itemize}
 \item $K(x,x',y,y')=1 \quad \forall x,x',y,y' \in (0,\infty)$ and 
 \item $\delta = 2t_0^2 e^{2 t_0 L}(\|u\|+ 2 t_0 L^2 + 2 t_0 L)<1$ where $L=\|u_0\| (T+1)$ hold.
 \end{itemize}
  \end{theorem}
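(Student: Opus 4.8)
The plan is to follow the two-stage template already used in Step~1 of Theorem~\ref{tm0}, adapted to the bivariate setting. The excerpt has reduced the task to showing that the operator $\mathcal{N}$ of \eqref{2dequivalent} (which is equivalent to $Q$) is contractive; once that is in hand, the inverse Elzaki/Laplace representation transfers the contraction to $\tilde{Q}$ of \eqref{tQ}. So first I would establish a bound $\|\mathcal{N}u - \mathcal{N}u^*\| \leq \delta_0 \|u - u^*\|$ for a suitable constant $\delta_0$, and then push it through the transform machinery to pick up one additional factor of $t_0$, so that the final constant is $\delta = \delta_0 t_0$.

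For the contraction of $\mathcal{N}$, I would exploit that $K \equiv 1$ makes the weight $R(x,y,t,u)=\int_0^t\int_0^\infty\int_0^\infty u(x',y',s)\,dx'dy'ds$ independent of $(x,y)$ and bounded by $t_0\|u\| \leq t_0 L$. Writing $\mathcal{N}u - \mathcal{N}u^*$ as the sum of the free-term difference $u_0(x,y)\bigl[e^{-R(u)} - e^{-R(u^*)}\bigr]$ and the difference of the two weighted convolution integrals, the estimates I would use are: the Lipschitz bound $|e^{-a} - e^{-b}| \leq |a-b|$ for $a,b \geq 0$ together with $|R(u) - R(u^*)| \leq t_0\|u - u^*\|$; the uniform control of the exponential factors by $e^{2t_0 L}$; and the bilinear splitting $uu - u^*u^* = (u - u^*)u + u^*(u - u^*)$ to linearise the quadratic terms. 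Bounding the two-dimensional convolution $\int_0^x\int_0^y u(x-x',y-y')u(x',y')\,dy'dx'$ in $L^1(\mathbb{R}_+^2)$ by a product of norms via Fubini and Young's inequality then yields $\|\mathcal{N}u - \mathcal{N}u^*\| \leq \delta_0\|u - u^*\|$ with $\delta_0 = 2t_0 e^{2t_0 L}(\|u\| + 2t_0 L^2 + 2t_0 L)$, the factor $2$ reflecting the simultaneous accounting of the birth (convolution) and death contributions through the two occurrences of the bilinear form.

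For the transfer to $\tilde{Q}$, I would reproduce the computation of Step~1 of Theorem~\ref{tm0} verbatim: starting from $\tilde{Q}u - \tilde{Q}u^* = E^{-1}\{vE(Q(u) - Q(u^*))\}$, I would unfold the inverse Elzaki transform, move the norm inside using the contraction of $Q$ just established, recognise the inner integral as a Laplace transform, and collapse the composition $\mathcal{L}^{-1}\{v^{-2}\mathcal{L}(\cdot)\}$, which contributes the extra factor $t_0$. This gives $\|\tilde{Q}u - \tilde{Q}u^*\| \leq \delta_0 t_0\|u - u^*\| = \delta\|u - u^*\|$ with $\delta = 2t_0^2 e^{2t_0 L}(\|u\| + 2t_0 L^2 + 2t_0 L)$, so the hypothesis $\delta < 1$ makes $\tilde{Q}$ a contraction.

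The hard part will be the bivariate convolution estimate. Unlike the one-dimensional case, whose contraction one can cite directly from \cite{singh2015adomian}, here the birth integral over the rectangle $[0,x]\times[0,y]$ must be bounded in the $L^1(\mathbb{R}_+^2)$ norm while carrying the exponential weights $e^{R(s,u)-R(t,u)}$, and the linearisation of $uu - u^*u^*$ has to be controlled uniformly in $s\in[0,t_0]$. Checking that Fubini applies and that the product-of-norms bound assembles into \emph{exactly} the claimed constant, in particular the factor $2$ and the joint dependence on $\|u\|$, $L$ and $L^2$, is the delicate bookkeeping; everything after that is the routine transform manipulation inherited from Theorem~\ref{tm0}.
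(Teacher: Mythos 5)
Your plan is essentially the paper's own proof: it establishes the contraction of the equivalent operator $\mathcal{N}$ of \eqref{2dequivalent} by bounding the difference of exponential weights (the paper's auxiliary $H[x,y,s,t]$, your Lipschitz estimate on $e^{-R}$ with $|R(u)-R(u^*)|\leq t_0\|u-u^*\|$), linearising the quadratic term via $uu-u^*u^*=(u-u^*)u+u^*(u-u^*)$, controlling the birth convolution in $L^1$ on the set $\|u\|\leq 2L$, and then transferring the bound to $\tilde{Q}$ through the inverse Elzaki/Laplace computation, which contributes the extra factor $t_0$. The only discrepancy is bookkeeping: the paper's derivation yields $\Delta=t_0e^{2t_0L}\left(\|u_0\|+2t_0L^2+2t_0L\right)$ for $\mathcal{N}$ with no leading factor of $2$, so your attribution of that $2$ to separate birth and death contributions does not reflect how the constant arises there --- the death term is already absorbed into the weight $R$, and the leading $2$ in the theorem's stated $\delta$ is an inconsistency of the paper rather than something your argument needs to produce.
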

  \begin{proof}
  Consider $u,u^* \in \mathbb{X},$ then
  \begin{align*}
  \mathcal{N}[u]-\mathcal{N}[u^*]=& u(x,y,0)\exp[-R(x,y,t,u)]-u^*(x,y,0)\exp[-R(x,y,t,u^*)]\\  &+\frac{1}{2}\int_{0}^{t}\exp[R(x,y,s,u)-R(x,y,t,u)]\int_{0}^{x}\int_{0}^{y}u(x-x',y-y',s) u(x',y',s)dy'dx'ds\\
     &-\frac{1}{2}\int_{0}^{t}\exp[R(x,y,s,c^*)-R(x,y,t,c^*)]\int_{0}^{x}\int_{0}^{y} u^*(x-x'y-y',s) u^*(x',y',s)dy'dx'ds.  
  \end{align*}
  Let us define an another operator $$H[x,y,s,t]= \exp\{R[x,y,s,u]-R[x,y,t,u]\}- \exp\{R[x,y,s,u^*]-R[x,y,t,u^*]\}.$$
  It can be easily proven that 
  $$|H[x,y,s,t]|\leq (t-s) \exp\{(t-s)B\}\|u-u^*\|\leq B_1 \|u-u^*\|, $$
  where $B_1= t e^{tB}$ and $B=\max\{\|u\|,\|u^*\|\}.$ 
  Further,
  \begin{align}\label{6}
   \mathcal{N}[u]-\mathcal{N}[u^*]=& u_0(x,y)H(x,y,0,t)+\frac{1}{2}\int_{0}^{t}H(x,y,s,t)\int_{0}^{x}\int_{0}^{y}u(x-x',y-y',s)u(x',y',s)dy'dx'ds \nonumber\\
       &+\frac{1}{2}\int_{0}^{t}\exp[R(x,y,s,u^*)-H(x,y,t,u^*)] \nonumber\\
       &\bigg[\int_{0}^{x}\int_{0}^{y} u^*(x-x'y-y',s)\{u(x',y',s)-u^*(x',y',s)\}dy'dx'\nonumber \\
       &+ \int_{0}^{x}\int_{0}^{y} u(x',y',s)\{u(x-x',y-y',s)-u^*(x-x',y-y',s)\}dx'dy'\bigg]ds.   
  \end{align}
 To show the non-linear operator $\mathcal{N}$ contractive, a set $D$ is defined such as $D= \{u\in \mathbb{X}: \|u\|\leq 2 L\}.$ Taking norm on both sides of \eqref{6} provides
\begin{align*}
 \| \mathcal{N}[u]-\mathcal{N}[u^*] \|&\leq B_1\|u-u^*\| \|u_0\|+B_1\|u-u^*\|\int_{0}^{t}\left[\frac{1}{2}\|u\|^2\right]ds +\int_{0}^{t}B_1\left[\frac{1}{2}(\|u\|+\|u^*\|)\|u-u^*\|\right]ds \nonumber\\
 & \leq B_1\left[\|u_0\|+\frac{1}{2}t\|u\|^2+\frac{1}{2}t(\|u\|+\|u^*\|)\right]\|u-u^*\| \nonumber \\
 & \leq t_0e^{2t_0L}\left[\|u_0\|+2t_0L^2+\frac{1}{2}t_0(2L+2L) \right]\|u-u^*\| \nonumber \\
 & = \Delta \|u-u^*\|,
\end{align*}
 for a suitable choice of $t_0$. So, the operator $\mathcal{N}$ is contractive if $\Delta =t_0e^{2t_0L}\left[\|u_0\|+2t_0L^2+2t_0L\right]<1$, hence $Q$ is contractive.
Now we are in position to demonstrate that $\tilde{\mathcal{Q}}$ is contractive. Consider,
 \begin{align*}
 \|\tilde{Q}u-\tilde{Q}u^*\|&= \|E^{-1}(vE\left[Qu\right])-E^{-1}(vE\left[Qu^*\right])\|\\
  &=\| \frac{1}{2 \pi} \int_{0}^{\infty} \left(\frac{1}{v^2}\int_{0}^{\infty} (Qv-Qv^*)e^{-vt}dt\right)e^{vt}v dv\|\\ 
  &\leq  \frac{1}{2 \pi} \int_{0}^{\infty} \left(\frac{1}{v}\int_{0}^{\infty} \|Qu-Qu^*\|e^{-vt}dt\right)e^{vt} dp\\
  & \leq \frac{1}{2 \pi} \int_{0}^{\infty} \left(\frac{1}{v}\int_{0}^{\infty} \delta\|u-u^*\|e^{-vt}dt\right)e^{vt} dv\\
  &= \frac{1}{2 \pi}\int_{0}^{\infty} \frac{1}{v} \mathcal{L}(\delta \|u-u^*\|)e^{vt}dv\\
  = \mathcal{L}^{-1}\left\{\frac{1}{v}\mathcal{L}(\delta\|u-u^*\|)\right\}=\delta t_0\|u-u^*\|= \Delta\|u-u^*\|.
 \end{align*}
 Which accomplishes the contractive nature of $\tilde{Q}$ by following the assumption.
  \end{proof}
  \begin{theorem}
  Assuming that the criteria of Theorem \eqref{tm1} hold and $v_i's$ are the elements of the series solution calculated by equation \eqref{2dagg_iterations}. Then the series solution converges to the exact solution with the error bound
  \begin{align*}
  \|u-\Psi_n^{FE}\|= \dfrac{\Delta^n}{1-\Delta}\|v_1\|,
  \end{align*}
  whenever $\Delta<1$ and $\|v_1\|<\infty.$
  \end{theorem}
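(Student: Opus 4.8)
The plan is to reuse the two-step template that proved Theorem \ref{tm0}, since the hard analytic work has already been done: Theorem \ref{tm1} supplies the contractivity of $\tilde{Q}$ on the Banach space $\mathbb{X}$ with constant $\Delta<1$. Hence only the convergence half remains, and it reduces to a standard Banach fixed-point / geometric-series estimate once the truncated sum is identified as an iterate of $\tilde{Q}$. The algebraic fact I would record first is the telescoping property of the accelerated He's polynomials: from $H_n = Q\big(\sum_{i=0}^{n} v_i\big)-\sum_{i=0}^{n-1}H_i$ with $H_0=Q[v_0]$, an easy induction gives $\sum_{i=0}^{n} H_i = Q\big(\sum_{i=0}^{n} v_i\big) = Q[\Psi_n^{BSCE}]$, exactly as in the one-dimensional computation of Step~2 of Theorem \ref{tm0}.

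First I would show that $\Psi_n^{BSCE} = \tilde{Q}[\Psi_{n-1}^{BSCE}]$. Expanding $\Psi_n^{BSCE}=\sum_{j=0}^{n} v_j$ with $v_0=u_0(x,y)$ and each higher iterate supplied by \eqref{2dagg_iterations}, the iterates collect into $u_0(x,y)+E^{-1}\{vE(\sum_{i=0}^{n-1} H_i)\}$; the telescoping identity then converts the partial sum of He's polynomials into $Q[\Psi_{n-1}^{BSCE}]$, so that $\Psi_n^{BSCE}=u_0(x,y)+E^{-1}\{vE(Q[\Psi_{n-1}^{BSCE}])\}=\tilde{Q}[\Psi_{n-1}^{BSCE}]$ by the definition \eqref{tQ}. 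I expect this to be the main obstacle: the double-integral structure of $Q$ and the bookkeeping of the partial sums of the bivariate He's polynomials must be matched term-by-term with the iteration, precisely mirroring the scalar computation performed for $\Psi_n^{CE}$.

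With that identity in hand, the contraction estimate of Theorem \ref{tm1} gives
\[
\|\Psi_{n+1}^{BSCE}-\Psi_n^{BSCE}\|=\|\tilde{Q}[\Psi_n^{BSCE}]-\tilde{Q}[\Psi_{n-1}^{BSCE}]\|\leq \Delta\,\|\Psi_n^{BSCE}-\Psi_{n-1}^{BSCE}\|,
\]
and iterating down to the base case yields $\|\Psi_{n+1}^{BSCE}-\Psi_n^{BSCE}\|\leq \Delta^{n}\|\Psi_1^{BSCE}-\Psi_0^{BSCE}\|=\Delta^{n}\|v_1\|$. Then, for $n>m$, the triangle inequality combined with the geometric series (valid because $\Delta<1$) gives
\[
\|\Psi_n^{BSCE}-\Psi_m^{BSCE}\|\leq \sum_{j=m}^{n-1}\Delta^{j}\|v_1\|=\frac{\Delta^{m}(1-\Delta^{n-m})}{1-\Delta}\|v_1\|\leq \frac{\Delta^{m}}{1-\Delta}\|v_1\|.
\]

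Finally, since $\|v_1\|<\infty$ and $\Delta<1$, the right-hand side tends to zero as $m\to\infty$, so $\{\Psi_n^{BSCE}\}$ is Cauchy in the complete space $\mathbb{X}$ and converges to a limit $\Psi$. Passing to the limit in $\Psi_n^{BSCE}=\tilde{Q}[\Psi_{n-1}^{BSCE}]$ and using the continuity of $\tilde{Q}$ identifies $\Psi$ as the fixed point $u=\tilde{Q}[u]$, that is, the exact solution of \eqref{2DPBE}. Fixing $m$ and letting $n\to\infty$ in the displayed Cauchy estimate then delivers the stated error bound $\|u-\Psi_m^{BSCE}\|\leq \frac{\Delta^{m}}{1-\Delta}\|v_1\|$, which completes the argument.
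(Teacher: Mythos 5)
Your proposal is correct and follows exactly the route the paper intends: the paper omits this proof entirely, stating only that it is ``similar to Theorem \ref{tm0},'' and your argument is precisely that two-step template (telescoping of the accelerated He's polynomials to get $\Psi_n^{BSCE}=\tilde{Q}[\Psi_{n-1}^{BSCE}]$, then the contraction constant $\Delta$ from Theorem \ref{tm1} feeding a geometric-series Cauchy estimate). In effect you have supplied the details the authors chose not to write out, with no substantive deviation from their approach.
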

  \begin{proof}
  The proof is similar to the Theorem \ref{tm0}, hence it is omitted here.
  \end{proof}
\begin{rem}
It is worth mentioning that the iterations and hence the finite term series solutions, computed using the HAM \cite{kaur2022approximate}, HPM  \cite{kaur2019analytical}, ADM \cite{singh2015adomian} and ODM \cite{kaushik2022novel} are identical to the iterations obtained using the AHPETM for the breakage equation which is linear. As a result, we have omitted the numerical implementations for pure breakage equation. So the  main focus of all the approaches is on approximating the non-linearity, which has no bearing on the linearity in the equations. 
\end{rem}
			\section{Numerical Results and Discussion}
				This section verifies numerically the effectiveness of the suggested approach for coagulation, combined fragmentation-coagulation, and bivariate aggregation equations.  Three physical test cases are considered and results for the number density and moments are compared with the precise solution as well as established and recently developed methods (ADM, HPM, HAM, ODM) for SCE. Due to the improved and significant results noticed in SCE, the numerical implementation is made for solving the coupled CFE and BSCE. Two test cases of CFE and one example of BSCE are taken into account to justify the effectiveness of our scheme. 
\subsection{Smoluchowski's Coagulation Equation}
\begin{example}
Consider the case of constant aggregation kernel $K(x,y)=1$ with the exponential initial data $u(x,0)=e^{-x}$ and for this the exact solution
$$u(x,t)=\frac{4}{(2+t)^2}e^{-\frac{2x}{2+t}},$$  is discussed in \cite{ranjbar2010numerical}.
\end{example}
Employing the equations \eqref{agg_iterations} and \eqref{agg_trn}, first three components of the series solutions are given as follows
$$v_0(x,t)= e^{-x},\quad
v_1(x,t)= \frac{1}{2} t e^{-x} (x-2),$$
\begin{align*}
v_2(x,t)=  t^3 \left(\frac{x^3}{144}-\frac{x^2}{12}+\frac{x}{4}-\frac{1}{6}\right) e^{-x}+t^2 \left(\frac{x^2}{8}-\frac{3 x}{4}+\frac{3}{4}\right) e^{-x},
\end{align*}
\begin{align*}
v_3(x,t)=& \frac{1}{40642560}t^3 e^{-x} \bigg(t^4 x^7+14 t^3 (7-4 t) x^6+588 (t-2) t^2 (2 t-3) x^5-2940 t (t (t (4 t-21)+36)-24) x^4\\& +11760 (5 (t-4) t ((t-3) t+6)+48) x^3-35280 (t (t (t (4 t-35)+120)-240)+192) x^2\\& +70560 (t (t (t (2 t-21)+90)-240)+288) x-10080 (t (t (t (4 t-49)+252)-840)+1344)\bigg).
\end{align*}
It is essential to mention here that the components $v_i$ are quite complicated and due to the complexity of the terms, it is hard to find a closed-form solution. Therefore, a three-term truncated solution is considered. However, thanks to "MATHEMATICA", one can compute the higher order terms using equation \eqref{agg_iterations}.\\
	\begin{figure}[h]
 		\centering
 	\subfigure[AHPETM $(n=3)$]{\includegraphics[width=0.45\textwidth,height=0.35\textwidth]{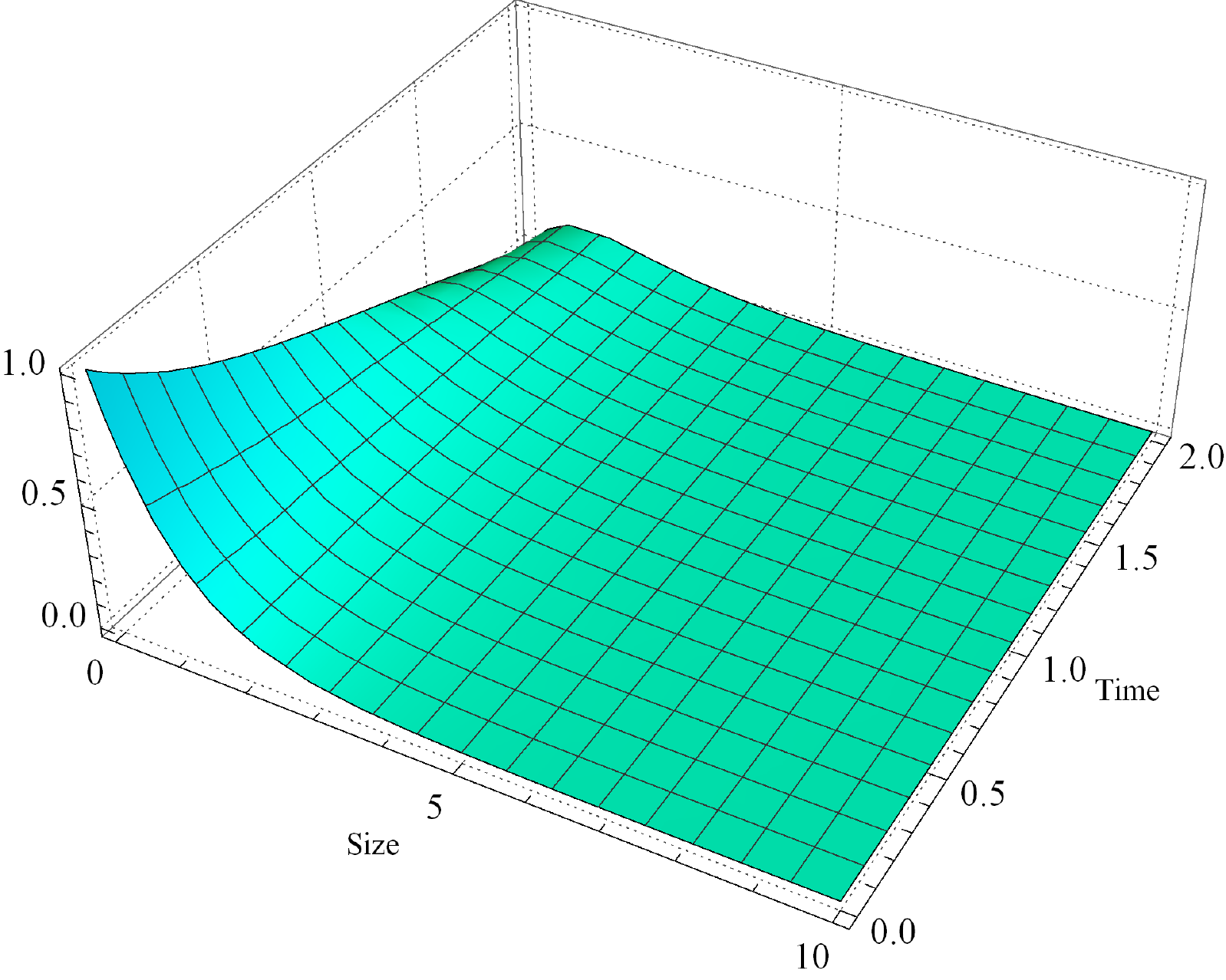}}
 	\subfigure[Exact]{\includegraphics[width=0.45\textwidth,height=0.35\textwidth]{{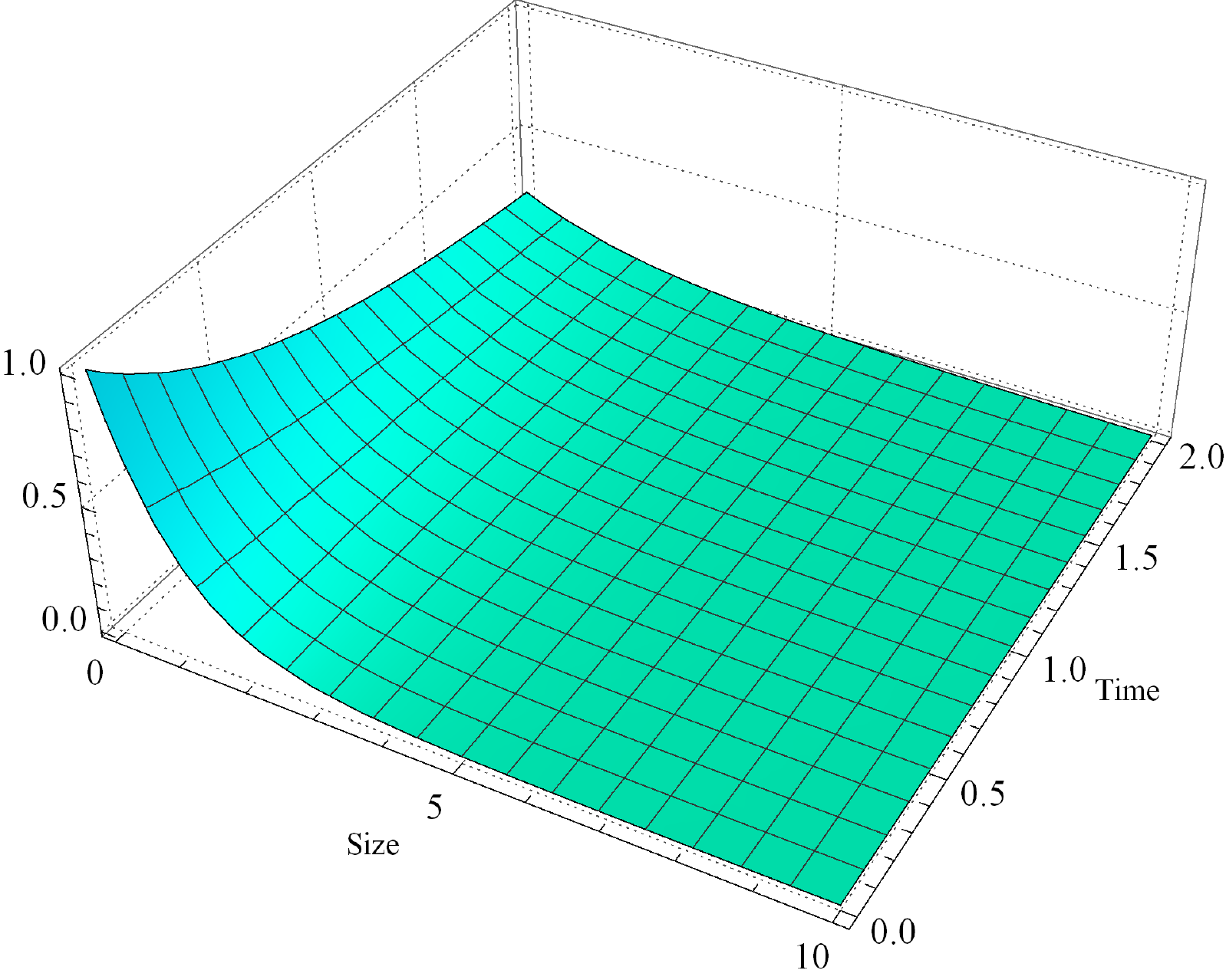}}}
\caption{Number density for AHPETM and exact solutions for Example 6.1}
\label{fig0}
\end{figure}
To see the accuracy of our proposed method, the approximated three-term and exact solutions are plotted in Figures \ref{fig0}(a) and \ref{fig0}(b). One can scrutinize that the AHPETM solution shows a remarkable agreement with the exact one.
	\begin{figure}[htb!]
 		\centering
 	\subfigure[AHPETM error $(n=3)$]{\includegraphics[width=0.45\textwidth,height=0.35\textwidth]{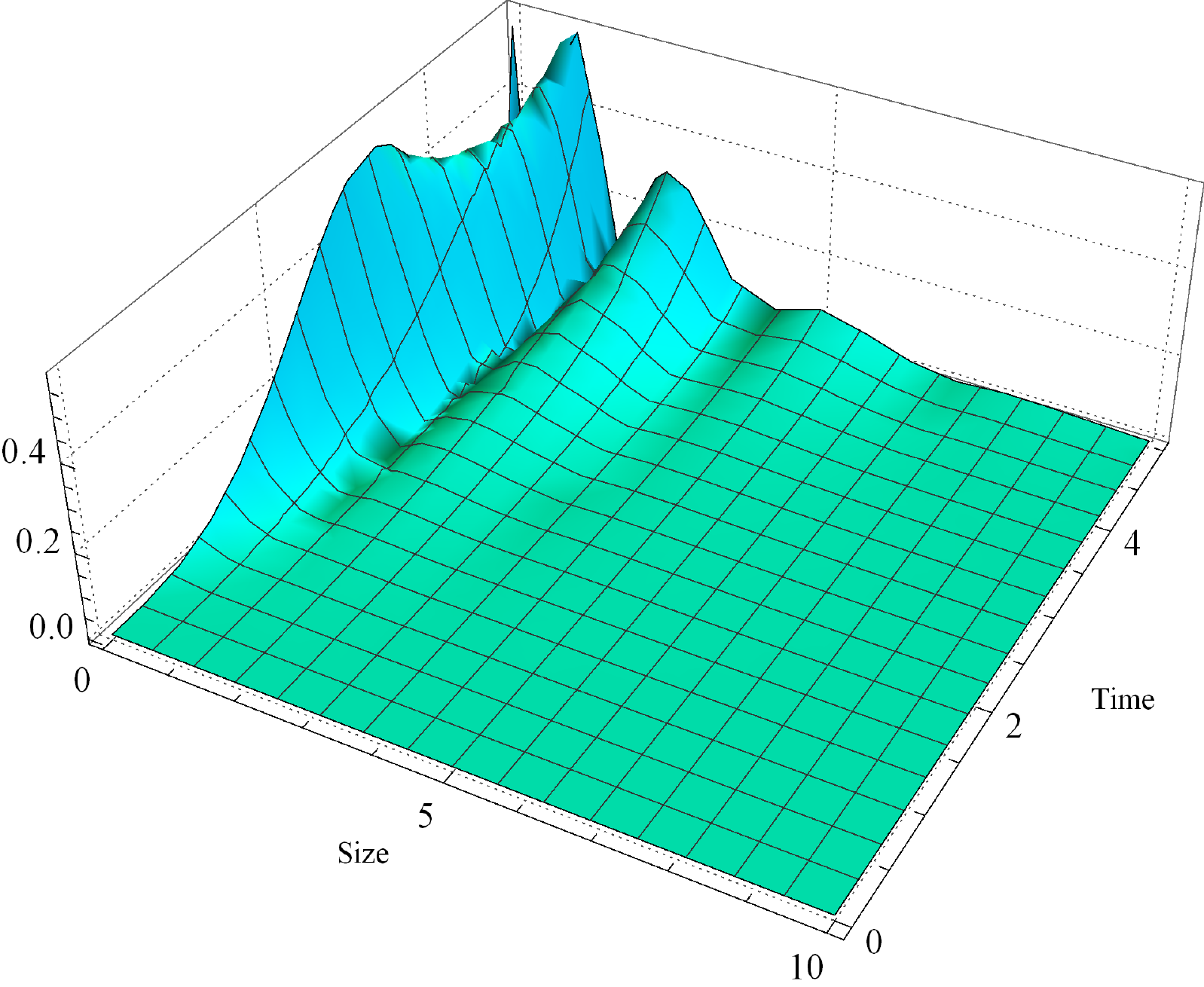}}
 	\subfigure[HPM error $(n=3)$]{\includegraphics[width=0.45\textwidth,height=0.35\textwidth]{{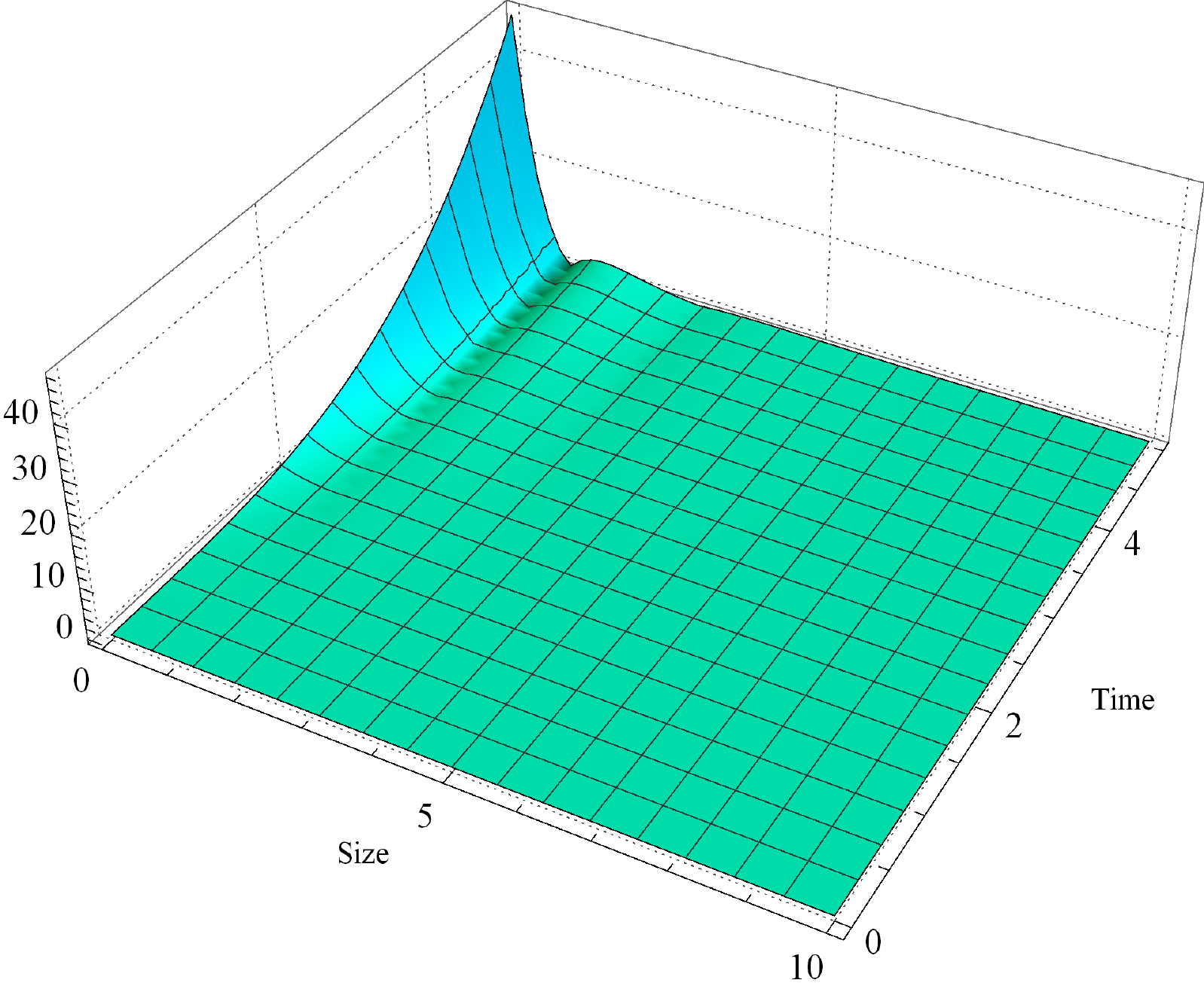}}}
 		\subfigure[ODM error $(n=3)$]{\includegraphics[width=0.45\textwidth,height=0.35\textwidth]{{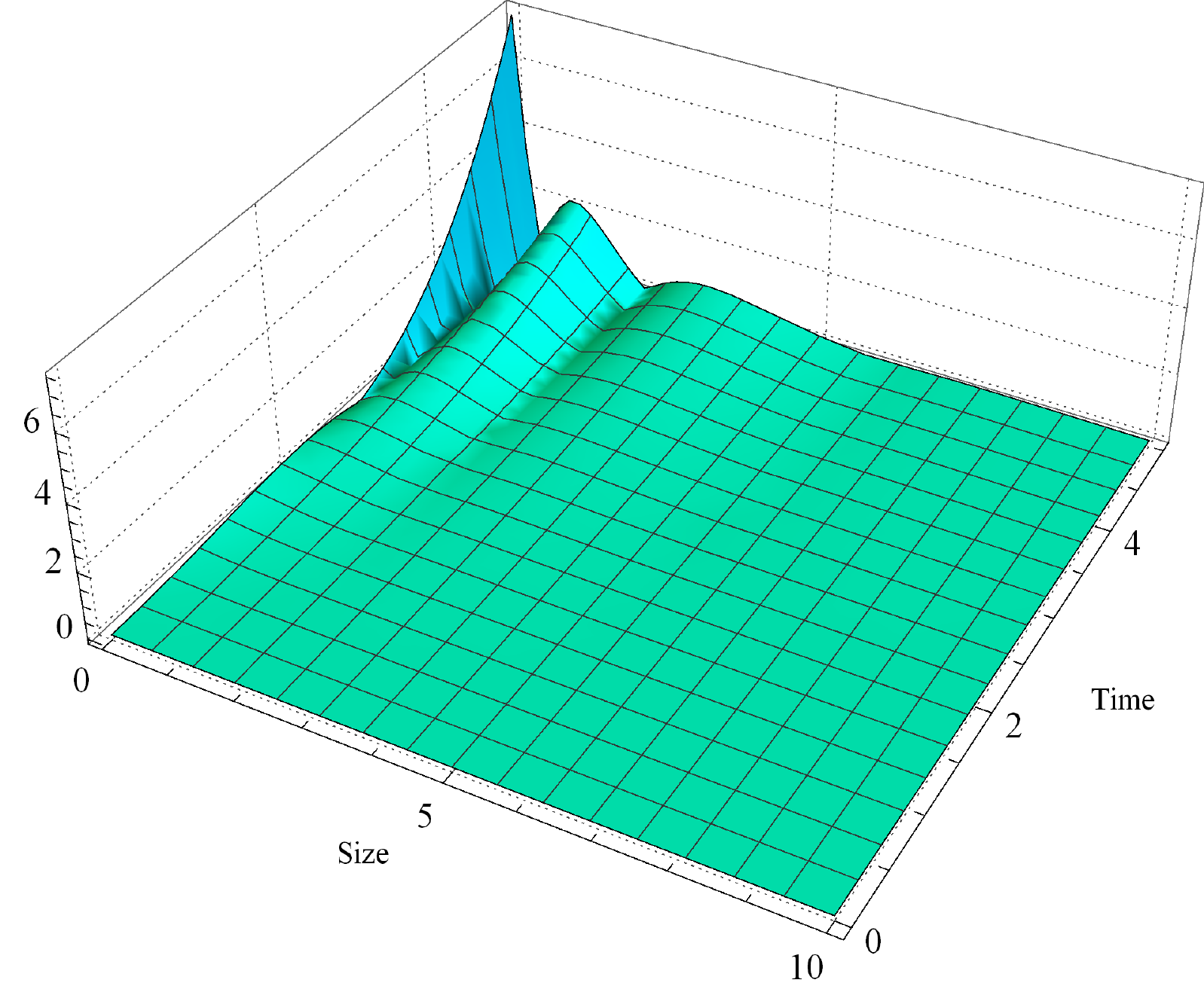}}}
\caption{AHPETM, HPM/ADM/HAM \& ODM errors}
\label{fig1}
\end{figure}
\begin{table*}[h]\centering
\caption{Numerical errors at $t=0.5,1,1.5 \text{ and } 2$ for $n=3,4,5,6$.}
\begin{tabular}{p{0.5cm}| p{2.5cm} p{2.5cm} p{2.5cm} p{2.5cm}}\toprule
$n$ & $t=0.5$ & $t=1$ & $t=1.5$ & $t=2$\\
\hline \hline
&&&&\\
3 & 0.0014 & 0.0153 & 0.0543 & 0.1239 \\
&&&&\\
4 & 1.366$\times 10^{-4}$ & 2.656$\times 10^{-3}$& 1.294$\times 10^{-2}$ & 3.632$\times 10^{-2}$\\
&&&&\\
5&$1.072\times 10^{-5}$ & 3.7972$\times 10^{-4}$ & 2.5718$\times 10^{-3}$& 9.0682$\times 10^{-3}$\\
&&&&\\
6&$7.154\times 10^{-7}$ & 4.6146$\times 10^{-5}$ & 4.3241$\times 10^{-4}$& 1.8931$\times 10^{-3}$\\
\bottomrule 
\end{tabular}
\label{error}
\end{table*}
Further, to see the beauty of our algorithm, errors between exact and AHPETM solutions are compared with the errors between exact and other well-established approximated solutions obtained via HPM/ADM/HAM and ODM in Figure \ref{fig1}. It is important to point out here that the HAM  \cite{kaur2022approximate}/ADM \cite{singh2015adomian}/HPM \cite{kaur2019analytical} provide the same iterations and hence the identical finite term series solution for the considered model. From Figure \ref{fig1}, it is observed that HAM is very badly approximated in comparison to ODM and AHPETM further improves the error of ODM very significantly. Table \ref{error} depicts the numerical errors of AHPETM at different time levels for $n=3,4,5$ and 6 using the formula provided in \cite{singh2015adomian}. As one can notice, the inaccuracy grows as time increases for a fixed number of terms and error decreases when more terms in the approximated solutions are taken into account.\\
 \begin{figure}[htb!]
 		\centering
	\subfigure[Number density at $t=2$]{\includegraphics[width=0.45\textwidth,height=0.35\textwidth]{{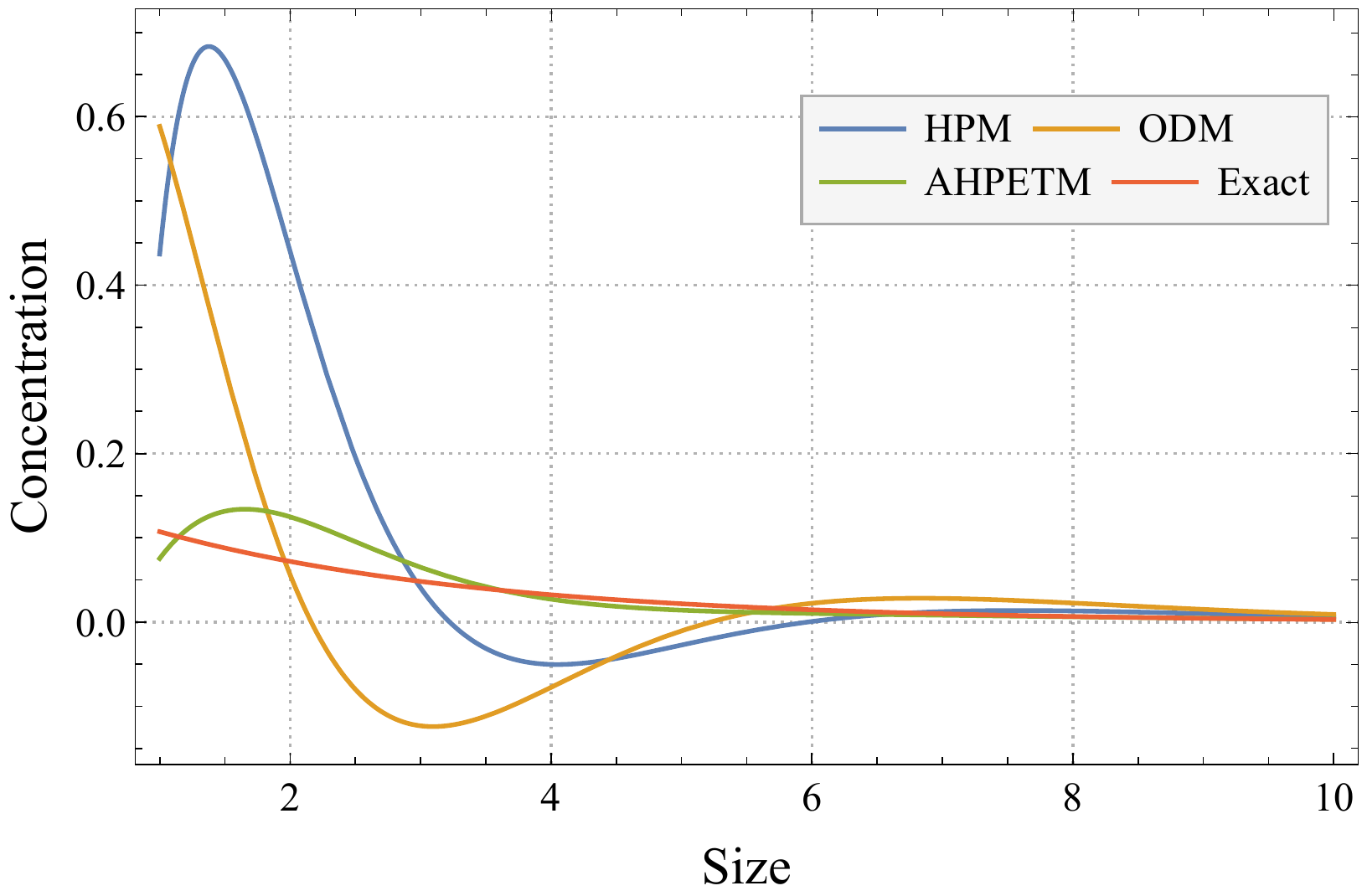}}}
 		\subfigure[Error at $x=5$]{\includegraphics[width=0.45\textwidth,height=0.35\textwidth]{{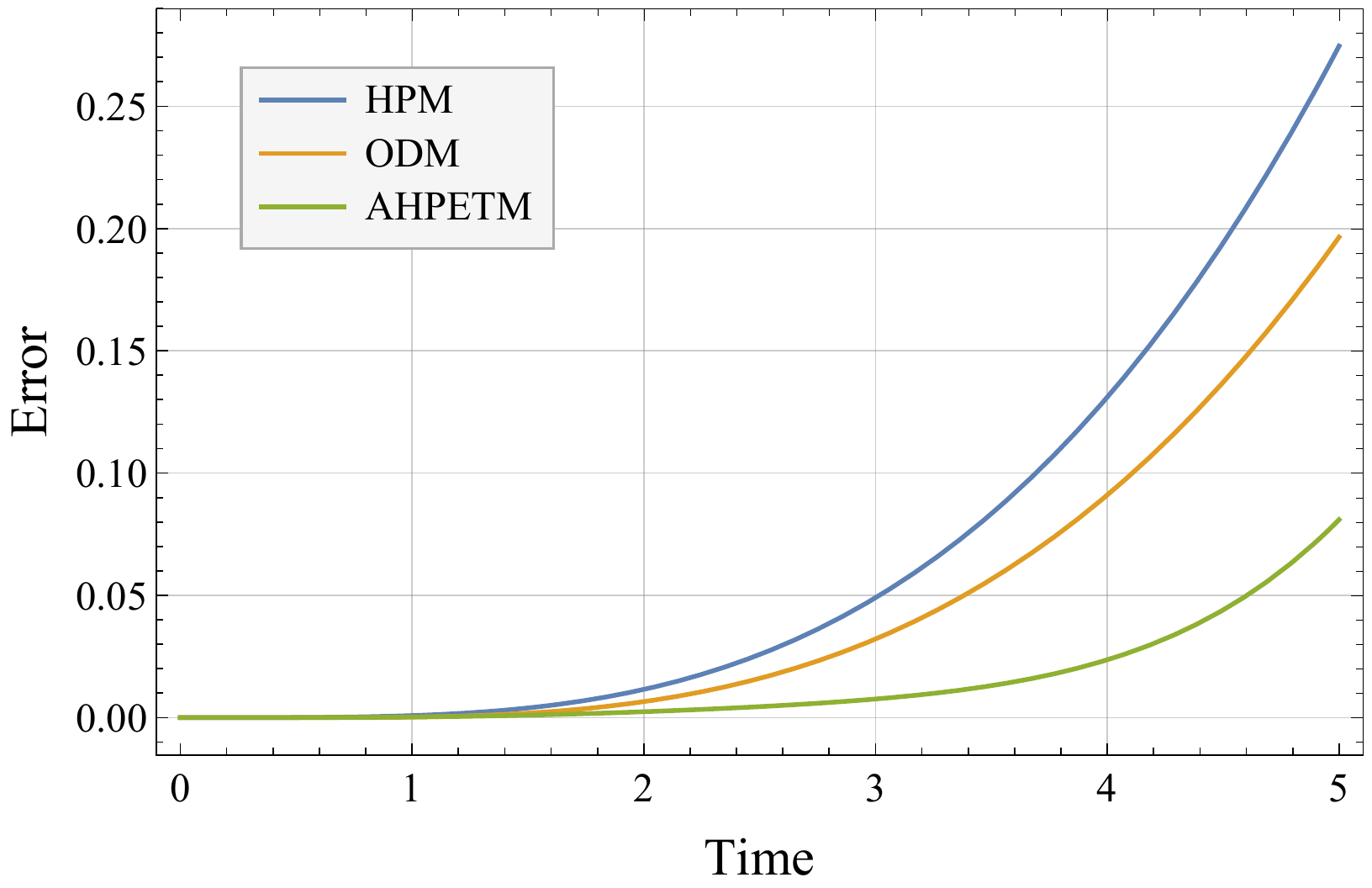}}}
\caption{Number density and error}
\label{fig2}
\end{figure}
 The superiority of AHPETM over HAM and ODM is also demonstrated in Figure \ref{fig2}. Figure \ref{fig2}(a) represents the concentration of particles at time $t=2$ and the solutions obtained using HAM and ODM blow up where the AHPETM solution matches well with the exact solution. Results for error from  Figure \ref{fig2}(b) indicate that all the schemes are quite efficient for a short time where as for a significant time, the errors due to HAM and ODM are relatively very high compared to AHPETM.
\begin{figure}[htb!]
 		\centering
	\subfigure[Zeroth moments]{\includegraphics[width=0.45\textwidth,height=0.35\textwidth]{{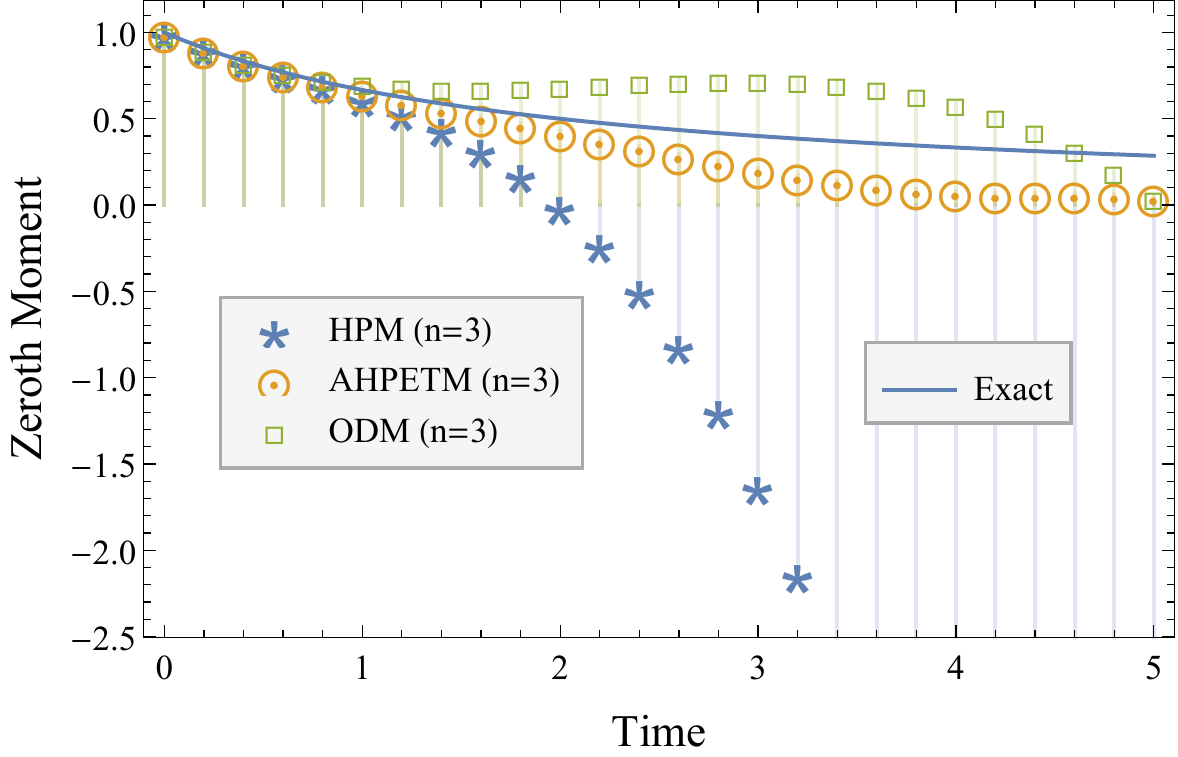}}}
 		\subfigure[Second moments]{\includegraphics[width=0.45\textwidth,height=0.35\textwidth]{{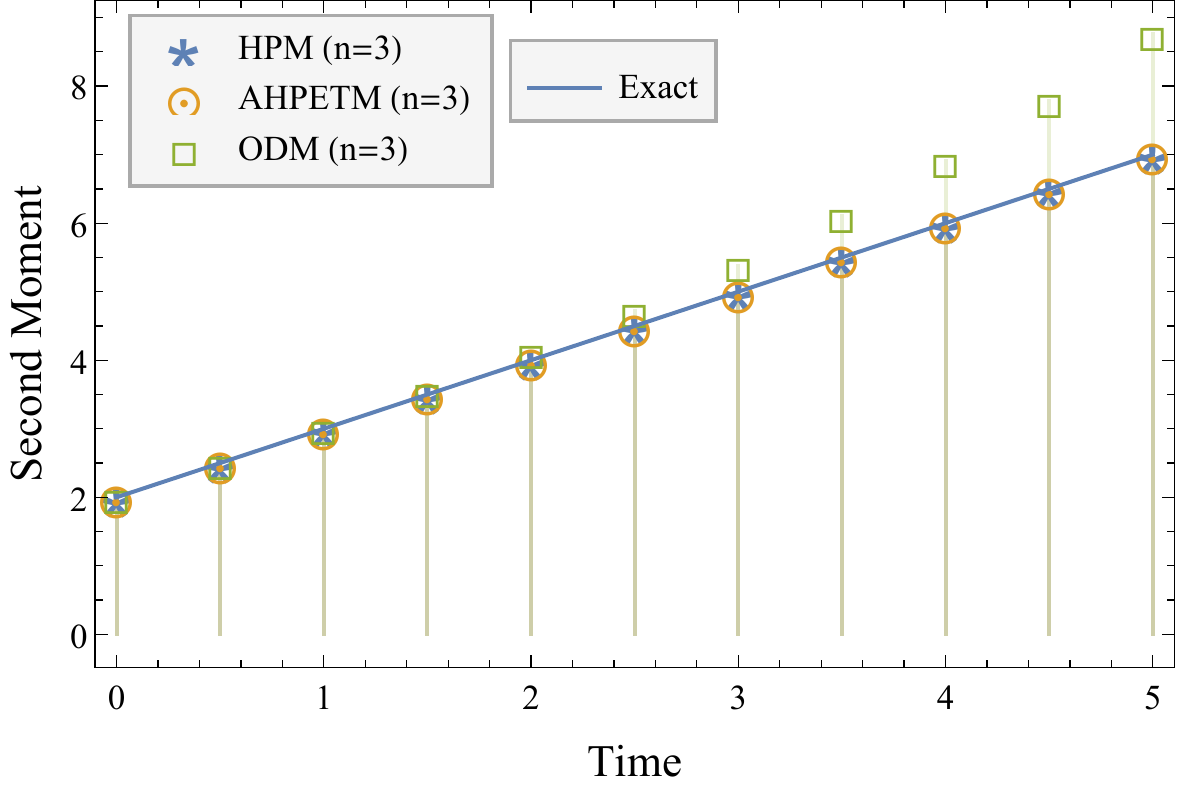}}}

\caption{Zeroth and second moments}
\label{fig3}
\end{figure}
Further, integral properties associated with number density are plotted in Figure \ref{fig3}. The zeroth (total number of particles) and second (energy dispersed by the system) moments are displayed and comparison are made with the precise moments. In Figure \ref{fig3}(a), AHPETM offers superior approximations in the zeroth moment while HAM under predicts the result and deviates almost exponentially from the exact one. ODM shows much better approximation than HAM but still suffers fluctuations. As shown in Figure \ref{fig3}(b), AHPETM continues to be the best option as the second moment produced by AHPETM and HAM coincides with the exact ones but ODM does not offer a decent estimate.\\
\begin{example}
Let us take aggregation kernel $K(x,y)=x+y$ with the exponential initial condition $u(x,0)=e^{-x}$. The exact number density is provided in \cite{scott1968analytic} as
$$u(x,t)=\frac{e^{\left(e^{-t}-2\right) x-t} I_1\left(2 \sqrt{1-e^{-t}} x\right)}{\sqrt{1-e^{-t}} x},$$ where $I_1$ is the Bessel function of the first kind.
\end{example}
Using the equations \eqref{agg_iterations} and \eqref{agg_trn}, first few components of the series solution are determined as
$$v_0(x,t)=e^{-x}, \quad v_1(x,t)= \frac{1}{2} t e^{-x} \left( x^2-2 x-2\right) ,$$
\begin{align*}
v_2(x,t)= \frac{1}{720} t^2 e^{-x} \left(t x (x^5-10  x^4-20  x^3+240  x^2-120 x-240)+60 x^4-360 x^3-180 x^2+1080 x+360\right).
\end{align*}
Continuing in a similar fashion, it is easy to compute the higher order components to find better-approximated results. A four-term truncated solution is considered here and the results are compared with the HAM and ODM solutions using the same number of terms.
		\begin{figure}[htb!]
	 		\centering
	 	\subfigure[AHPETM error $(n=4)$]{\includegraphics[width=0.45\textwidth,height=0.35\textwidth]{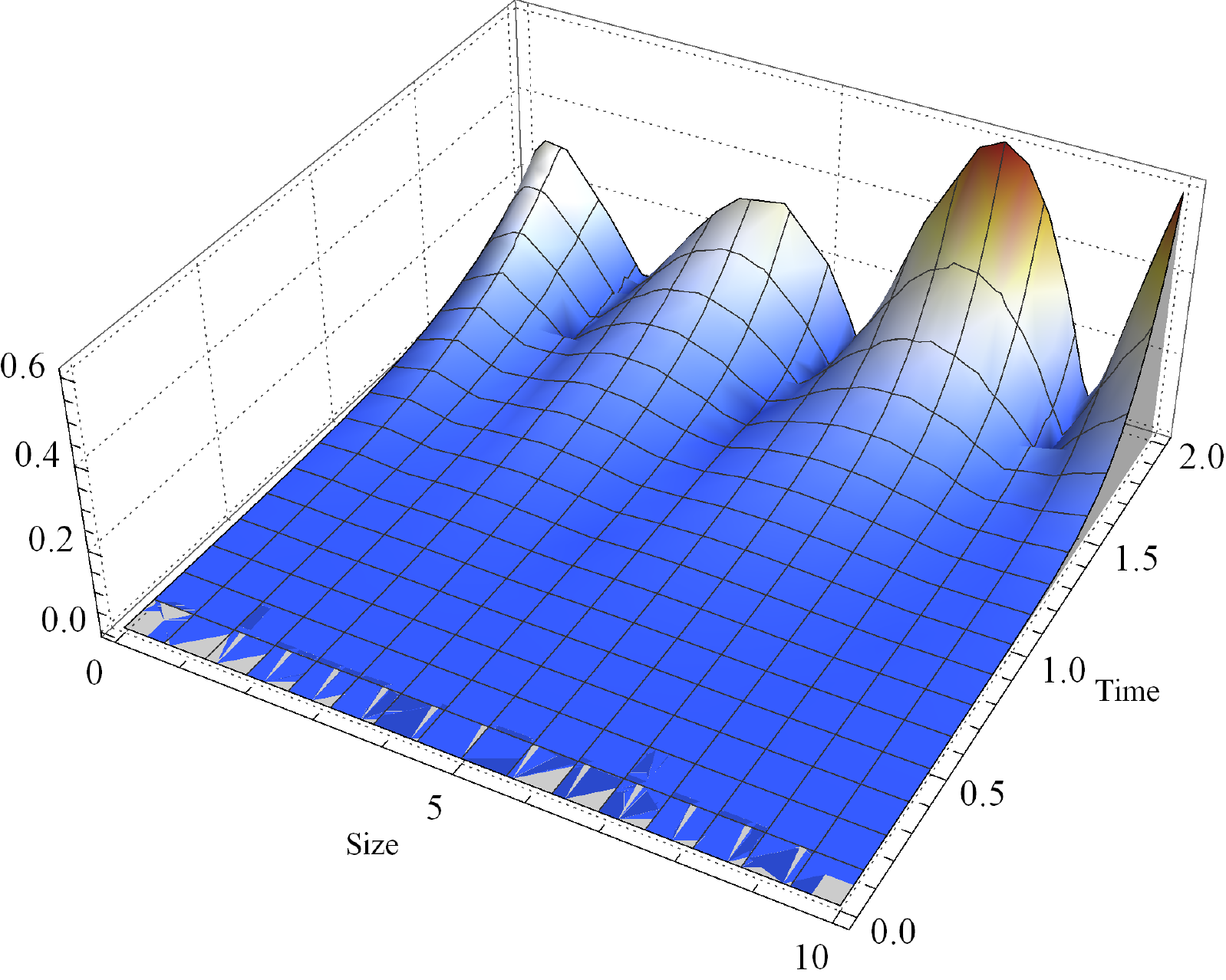}}
	 	\subfigure[HPM error $(n=4)$]{\includegraphics[width=0.45\textwidth,height=0.35\textwidth]{{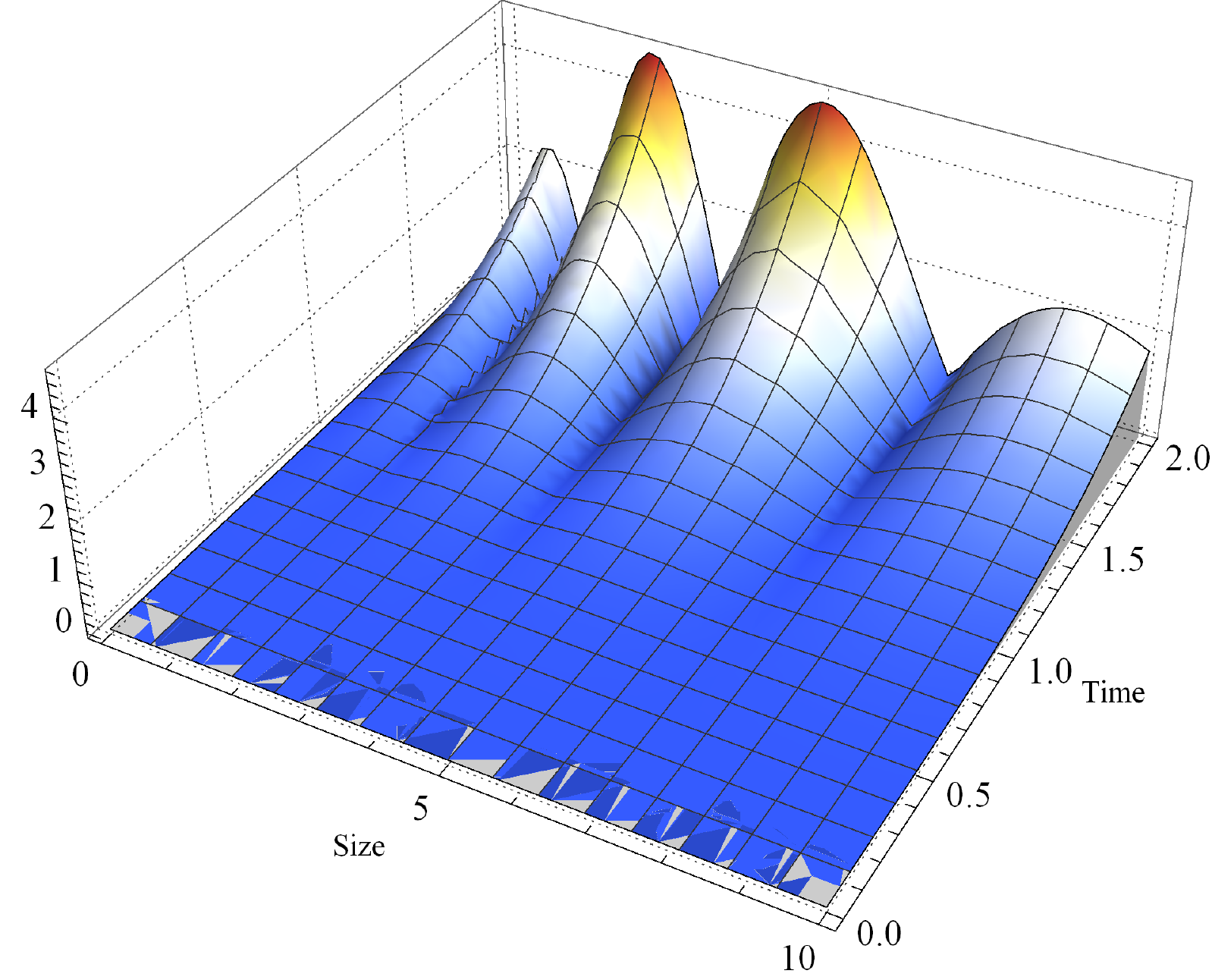}}}
	 		\subfigure[ODM error $(n=4)$]{\includegraphics[width=0.45\textwidth,height=0.35\textwidth]{{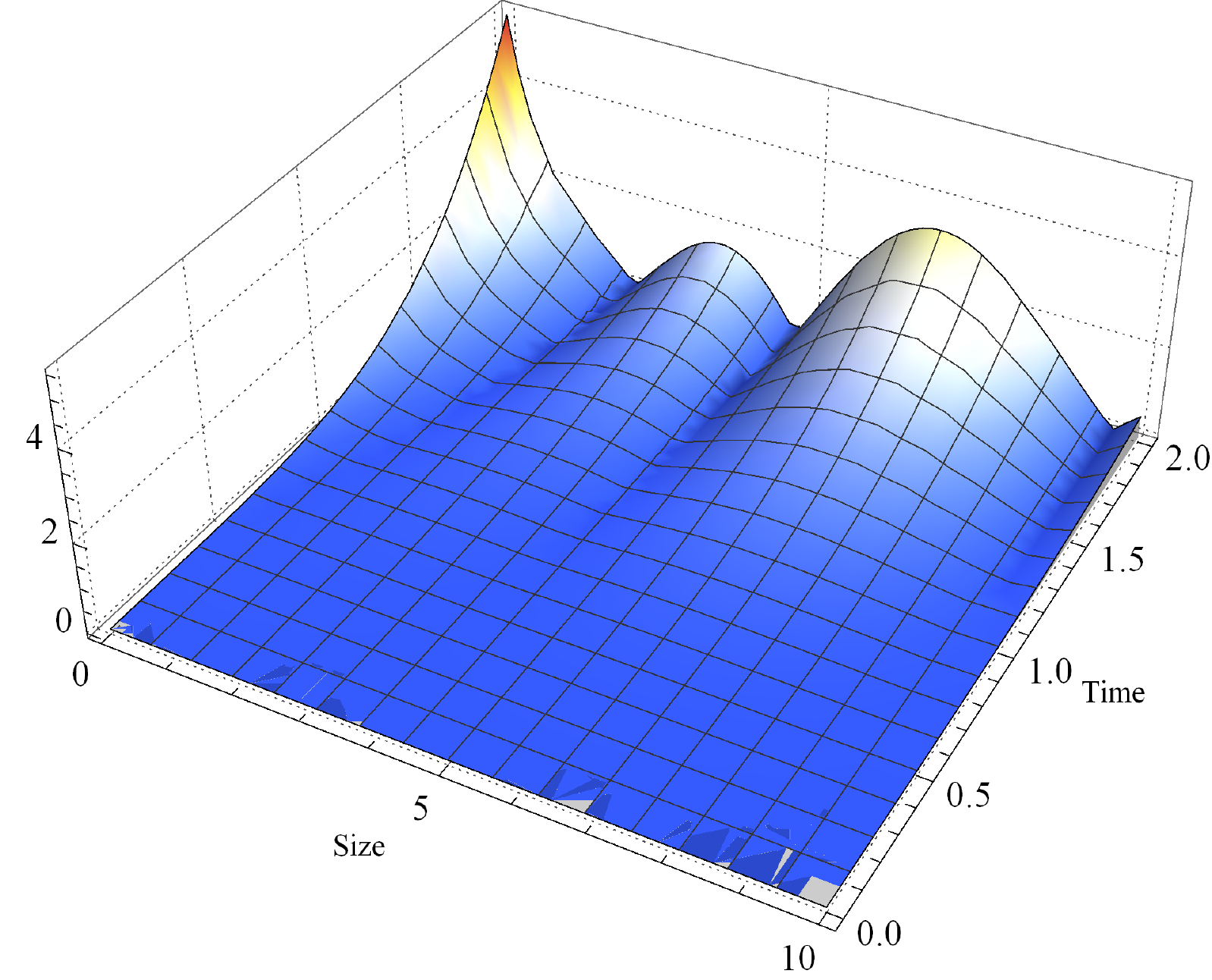}}}
	\caption{AHPETM, HPM/ADM/HAM \& ODM errors}
	\label{fig4}
	\end{figure}
	\begin{figure}[htb!]
	 		\centering
		\subfigure[Number density at $t=2$]{\includegraphics[width=0.45\textwidth,height=0.35\textwidth]{{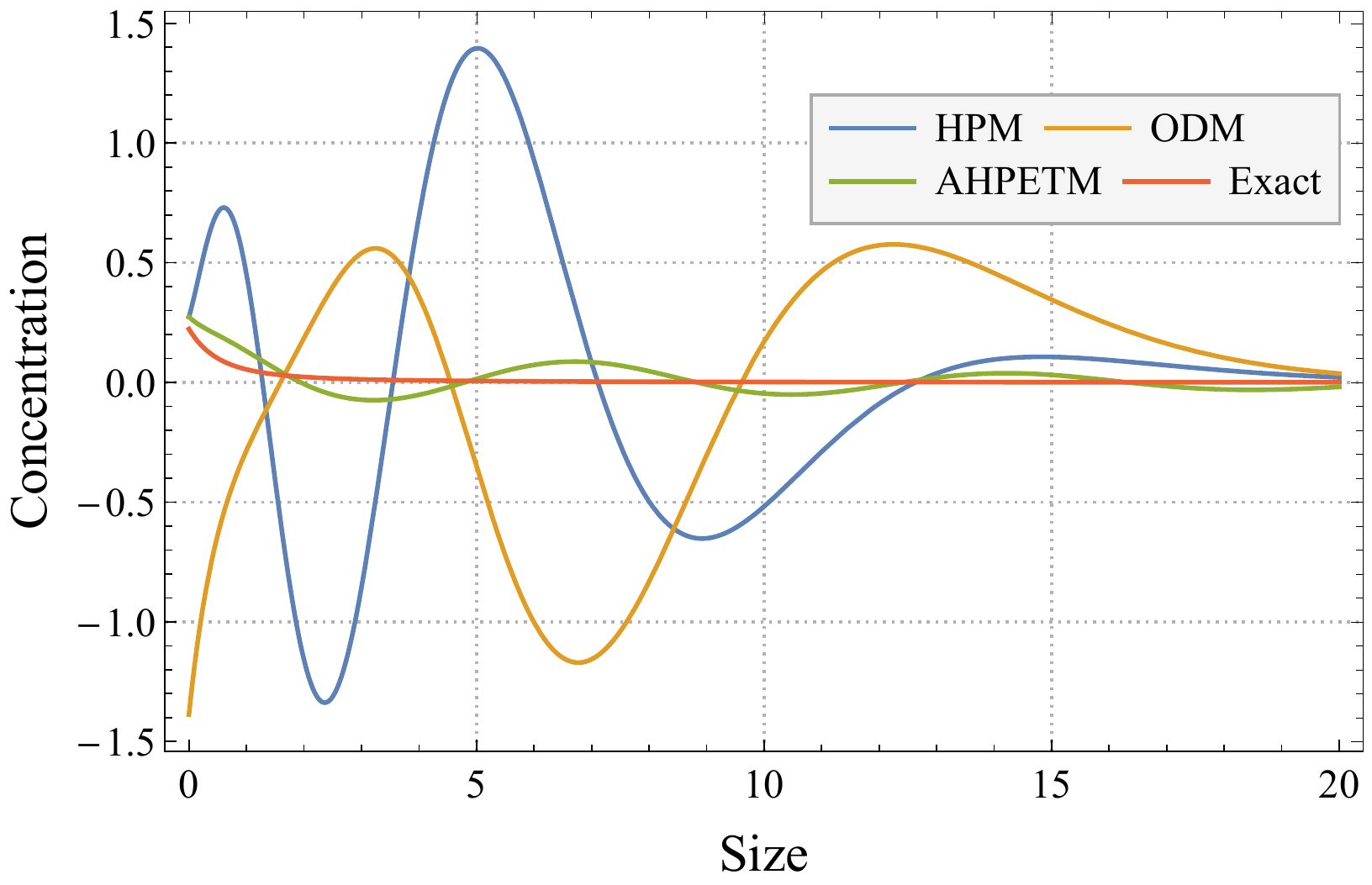}}}
	 		\subfigure[Error at $x=5$]{\includegraphics[width=0.45\textwidth,height=0.35\textwidth]{{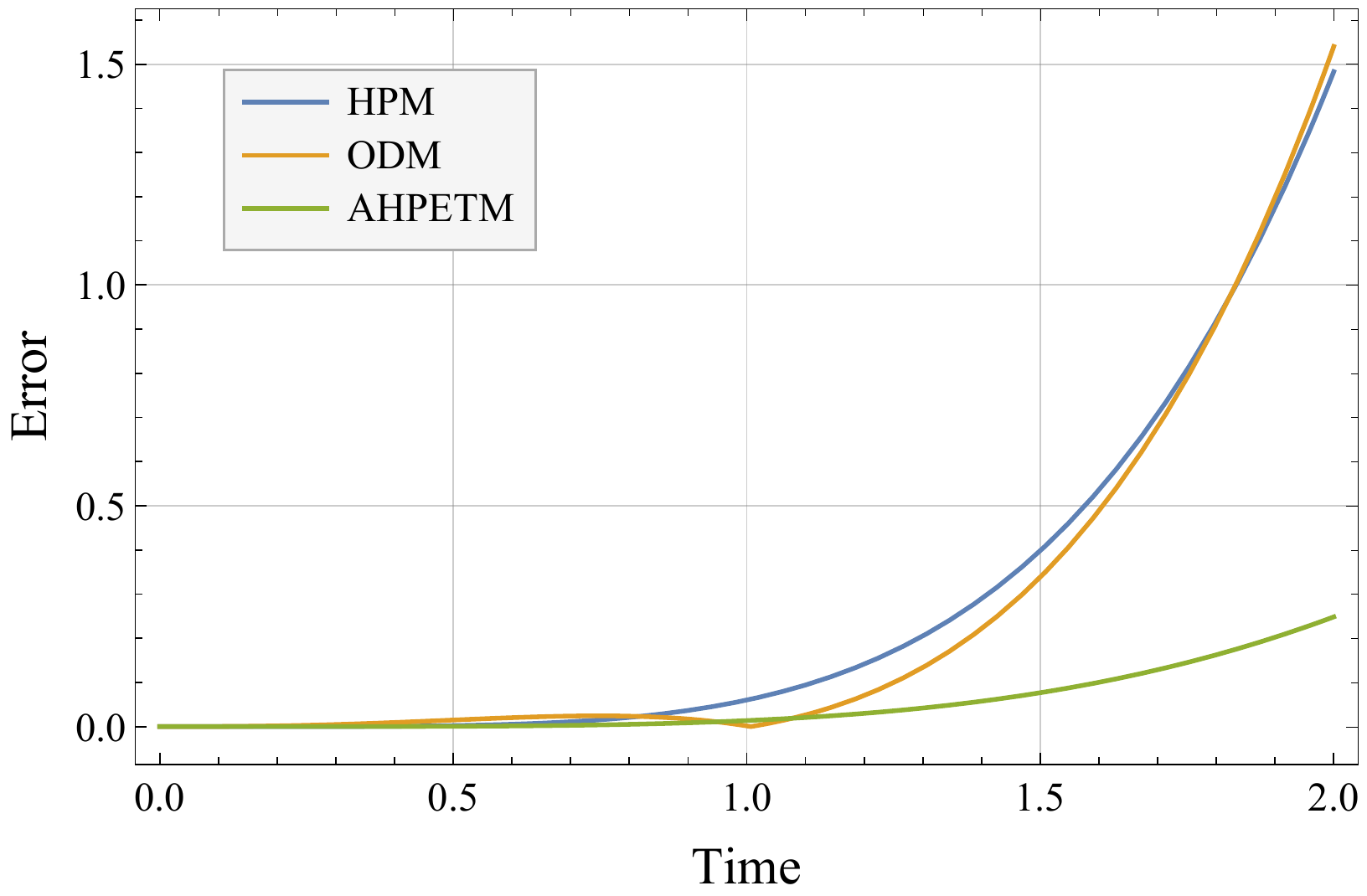}}}
	
	\caption{Number density and error}
	\label{fig5}
	\end{figure}\\
	\begin{table*}[h]\centering
	\caption{Absolute error for $x=5$ at different time levels}
	\begin{tabular}{p{0.5cm}| p{1.5cm} p{1.8cm} p{1.5cm} p{1.5cm} p{3cm} p{3cm} p{3cm}}\toprule

	$t$ & Exact & AHPETM & ODM & HPM & AHPETM error& ODM error & HPM error\\
			\hline \hline
	&&&&&&&\\
	0.2& 0.0129& 0.0129& 0.0118 & 0.0131& 2.71288$\times 10^{-5}$&1.0352$\times 10^{-3}$&1.4248$\times 10^{-4}$\\
	0.4& 0.0146& 0.0151& 0.0053 & 0.0184& 5.3035$\times 10^{-4}$& 9.2238$\times 10^{-3}$& 3.8424$\times 10^{-3}$\\
	0.6& 0.0138& 0.0162& -0.0177 & 0.03868& 2.3932$\times 10^{-3}$& 3.1524$\times 10^{-2}$& 2.4873$\times 10^{-2}$\\
	0.8& 0.0121& 0.0179& -0.0601 & 0.1026& 5.8862$\times 10^{-3}$& 7.223$\times 10^{-2}$& 9.060 $\times 10^{-2}$\\
	1.0& 0.0101& 0.0203& -0.1226 & 0.2523& 0.0102& 0.1327& 0.2422\\
	1.2& 0.0082& 0.0220& -0.2034 & 0.5428& 0.0137& 0.2117& 0.5345\\
	1.4& 0.0067& 0.0197&-0.2989 & 1.042& 0.0131 & 0.3057& 1.0357\\
	1.6& 0.00545& 0.00506& -0.4031 & 1.8325& 0.00038& 0.4085 & 1.8271\\
		\bottomrule 
	\end{tabular}
	\label{error1}
	\end{table*}
As observed in the previous case, AHPETM is again found to be more accurate than HAM and ODM, see Figure \ref{fig4}.  The error due to AHPETM is not only significantly smaller than the existing approximated solutions of HAM and ODM but also close to zero. This is also clear from Figure \ref{fig5}(a) which displays the error at $t=2$ for all the schemes. Further, Figure \ref{fig5}(b) and Table \ref{error1} demonstrate that for a fixed $x$ and time upto $t=1$, HPM and ODM errors are almost identical and insignificant but as the time increases, the error due to the first two schemes are still the same but grow almost exponentially while AHPETM performs almost consistently. 
	\begin{figure}[htb!]
	 		\centering
   		\subfigure[Zeroth moments]{\includegraphics[width=0.45\textwidth,height=0.35\textwidth]{{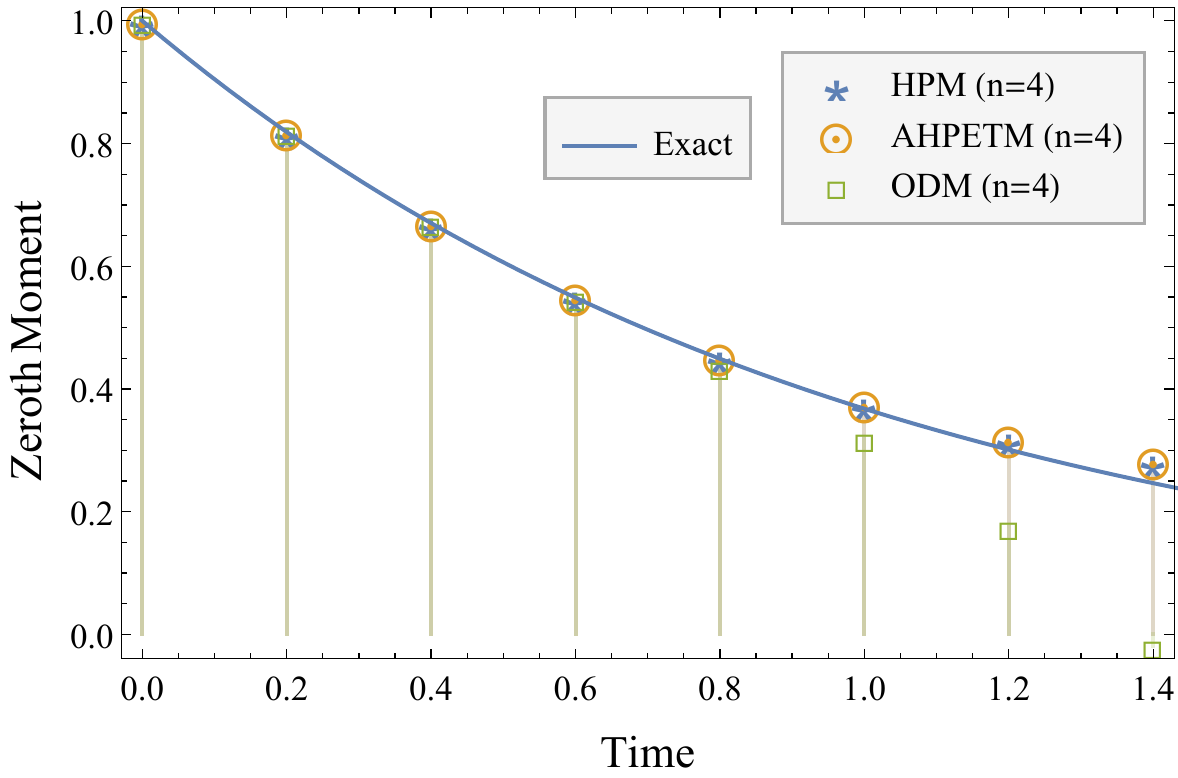}}}
	 		\subfigure[Second moments]{\includegraphics[width=0.45\textwidth,height=0.35\textwidth]{{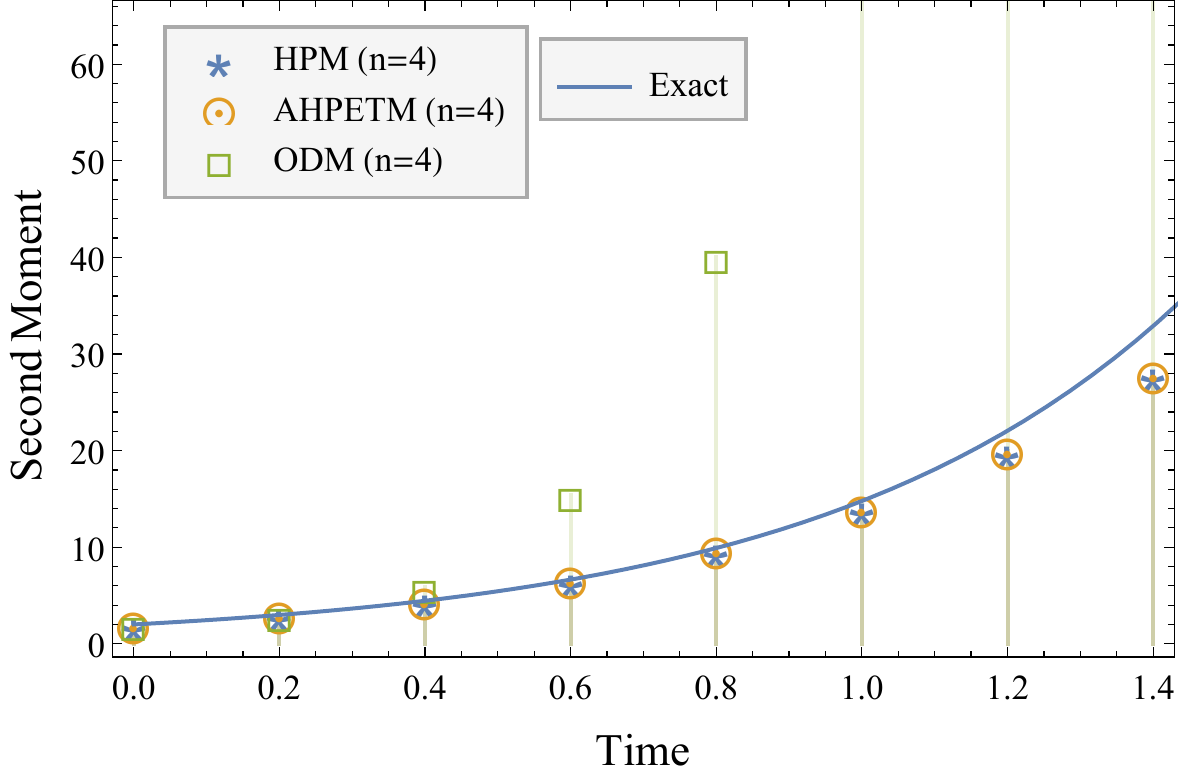}}}
	\caption{Zeroth and second moments}
	\label{fig6}
	\end{figure}\\
Moving further, approximated and exact moments are compared for all three methods in Figure \ref{fig6}. Surprisingly, ODM under predicts and over predicts the zeroth and second moment, respectively, while HPM and AHPETM gave almost identical findings and provided an excellent approximations to the exact zeroth and second moments. We would like to point out that the first moment is constant in all cases therefore, graphs are omitted. 
\begin{example}
Consider the case of product aggregation kernel $K(x,y)=xy$ with the exponential initial condition $u(x,0)=e^{-x}$ and the precise solution is provided in \cite{ranjbar2010numerical} as
$$u(x,t)=\sum _{k=0}^{\infty} \frac{t^k x^{3 k} \exp (-(t+1) x)}{(k+1)! \Gamma (2 k+2)}.$$ 
\end{example}
Using the recursive scheme defined in equation \eqref{agg_iterations}, a five term truncated solution is considered. Due to complexity of the terms, only few given here as
$$v_0(x,t)=e^{-x}, \quad v_1(x,t)= \frac{1}{12} t e^{-x} x \left(x^2-12\right),$$
\begin{align*}
v_2(x,t)=\frac{1}{544320}\left(t^2 e^{-x} x^2 \left(t x^7-144 t x^5+3024 t x^3+756 x^4-45360 x^2+272160\right)\right).
\end{align*}
		\begin{figure}[h]
	 		\centering
	 	\subfigure[AHPETM error $(n=5)$]{\includegraphics[width=0.45\textwidth,height=0.35\textwidth]{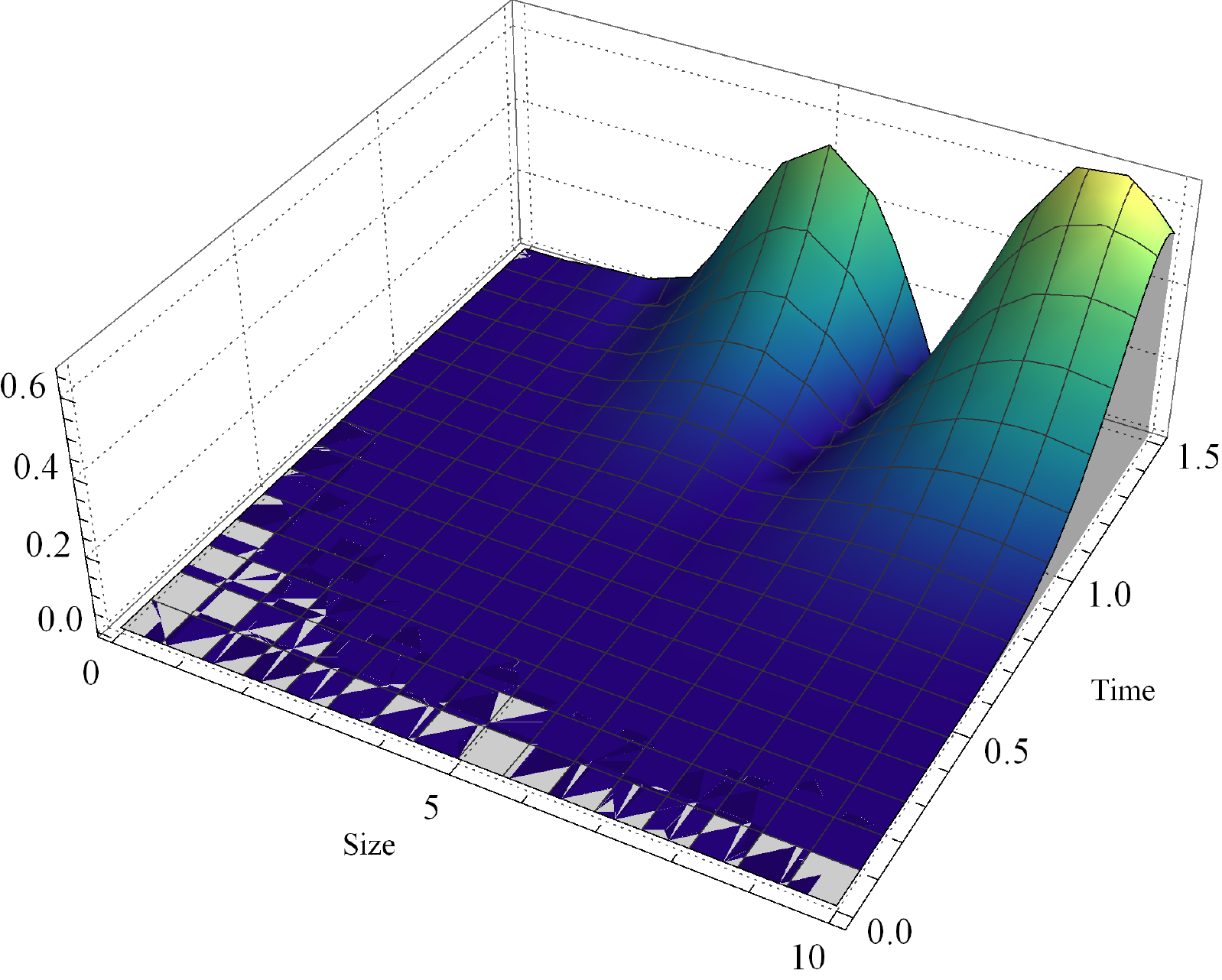}}
	 	\subfigure[HPM error $(n=5)$]{\includegraphics[width=0.45\textwidth,height=0.35\textwidth]{{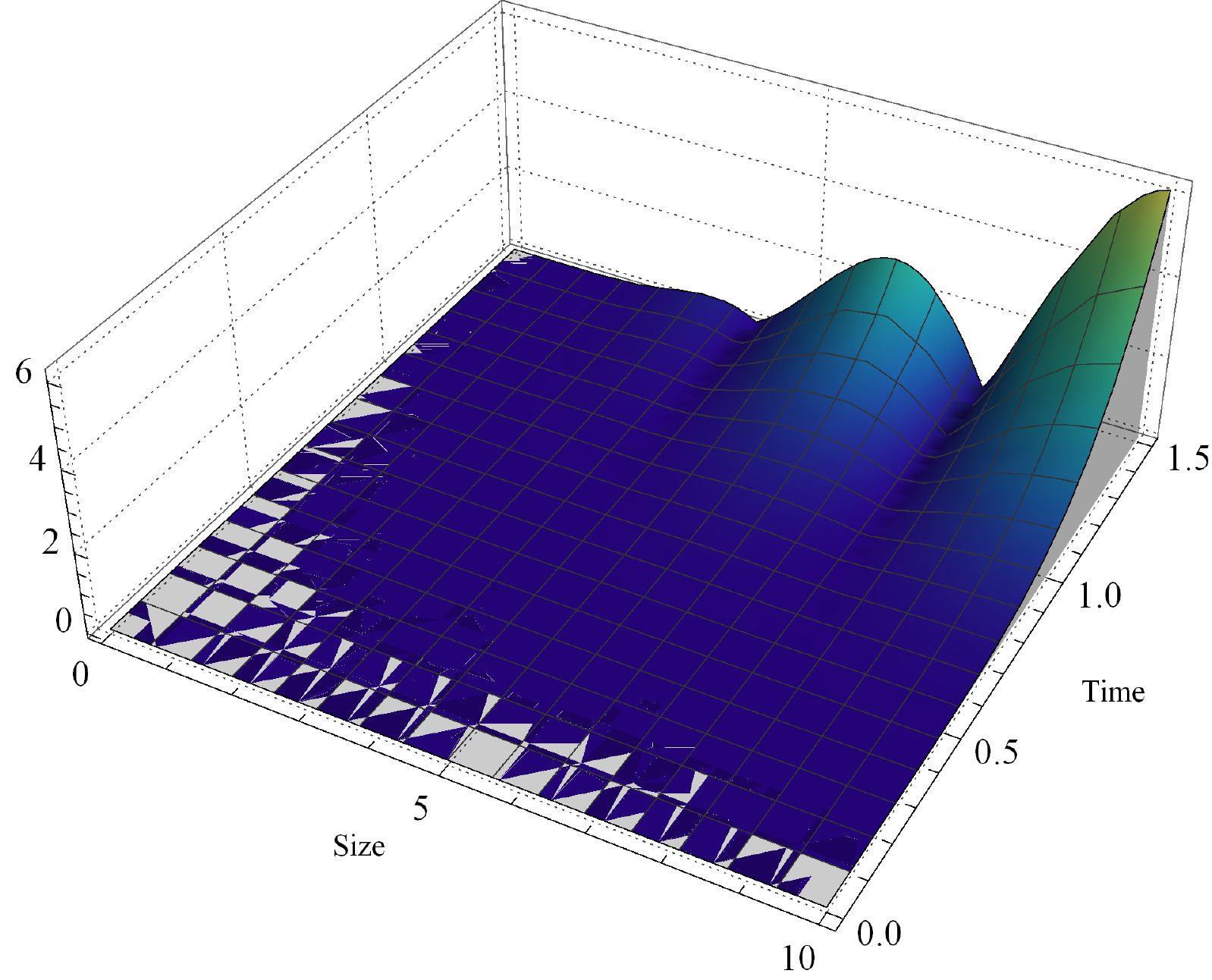}}}
	 		\subfigure[ODM error $(n=5)$]{\includegraphics[width=0.45\textwidth,height=0.35\textwidth]{{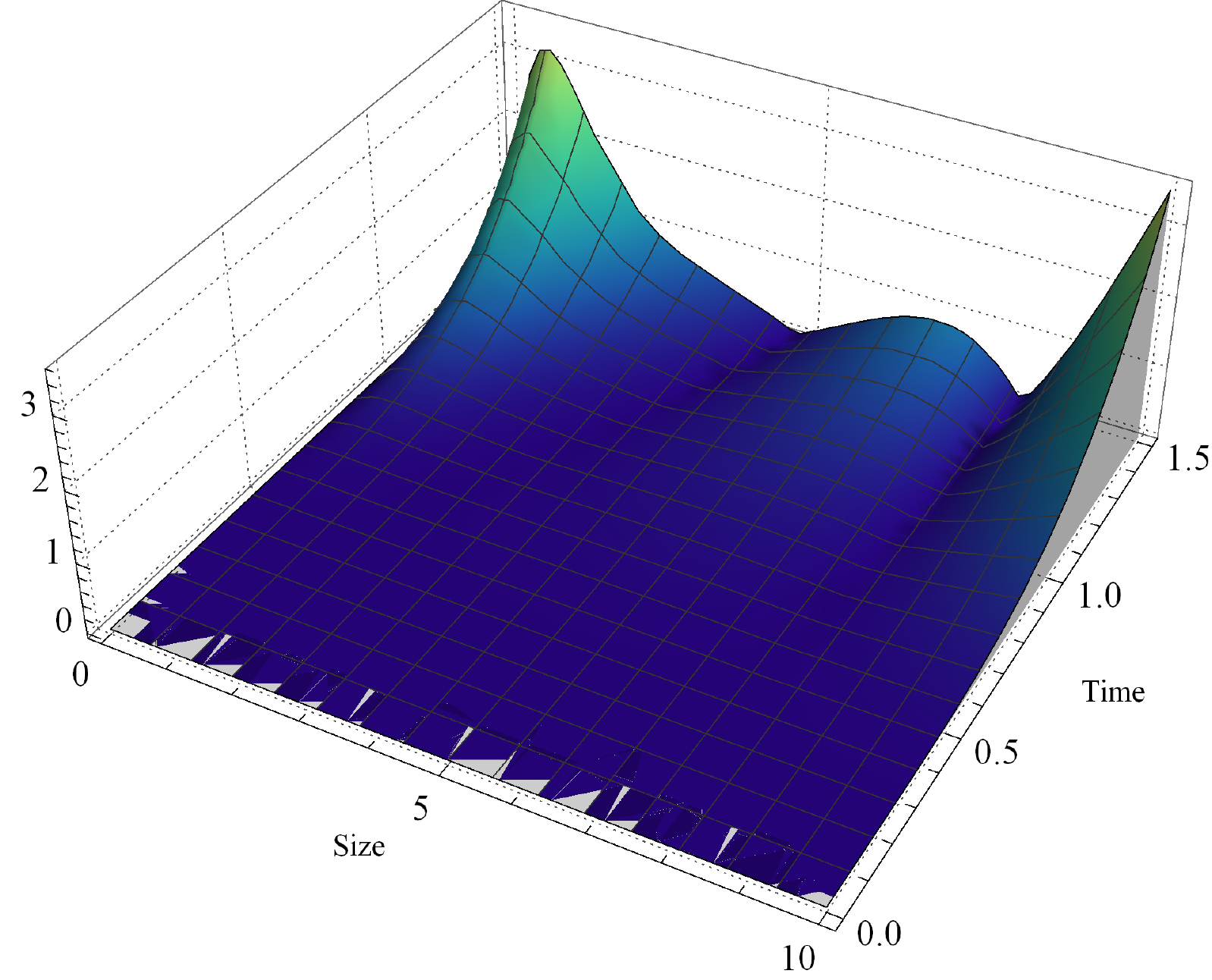}}}
	\caption{AHPETM, HPM/ADM/HAM \& ODM errors}
	\label{fig7}
	\end{figure}
	\begin{figure}[htb!]
	 		\centering
		\subfigure[Number density at $t=2$]{\includegraphics[width=0.45\textwidth,height=0.35\textwidth]{{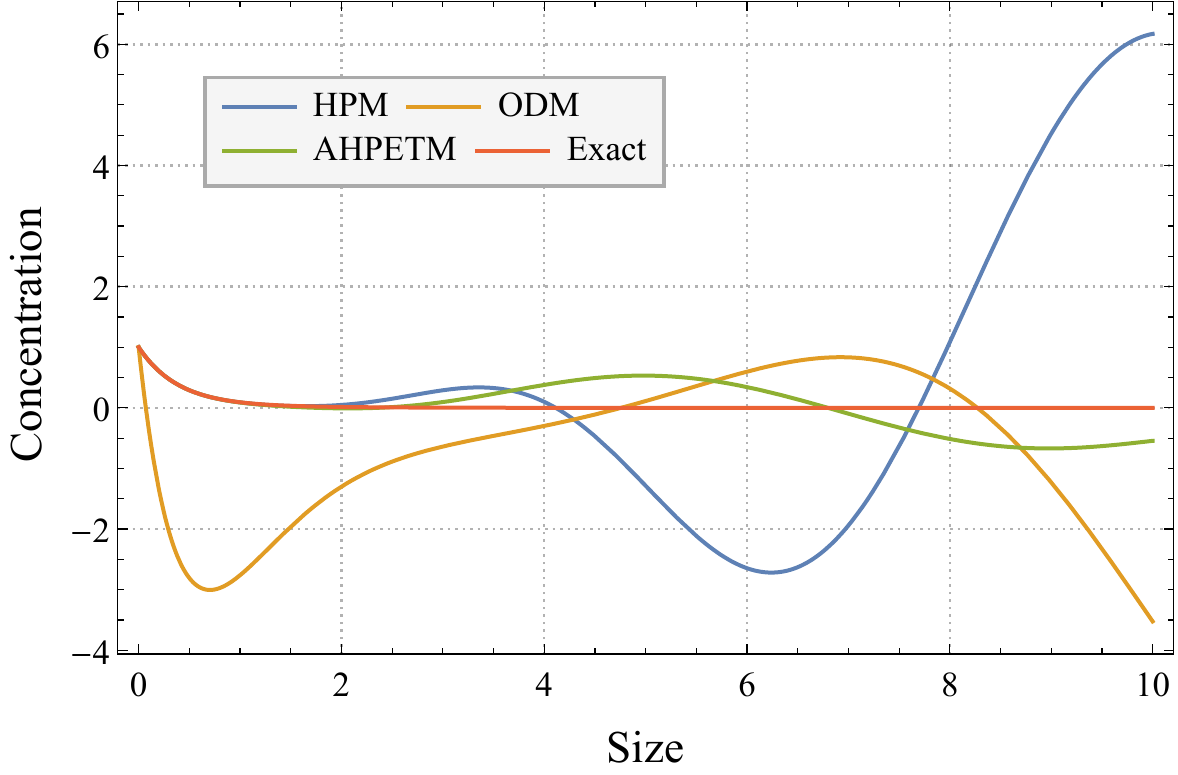}}}
	 		\subfigure[Error at $x=5$]{\includegraphics[width=0.45\textwidth,height=0.35\textwidth]{{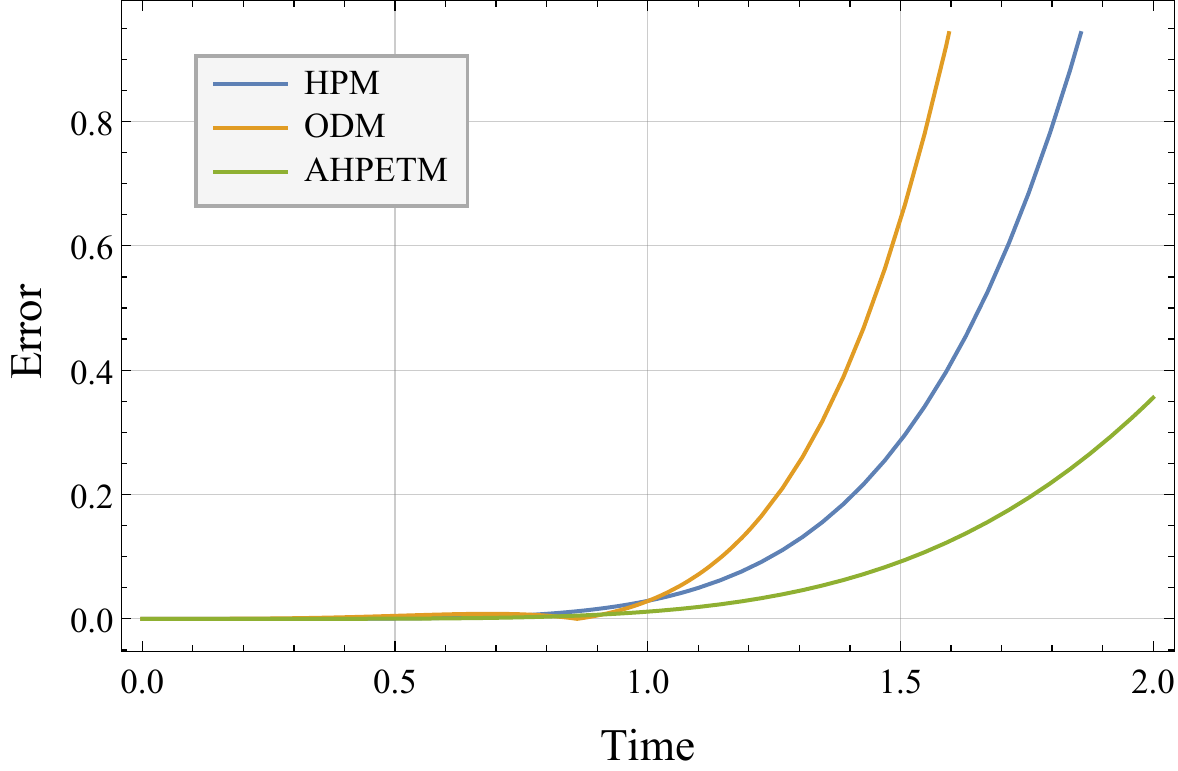}}}
	
	\caption{Number density and error}
	\label{fig8}
	\end{figure}
	\begin{figure}[htb!]
	 		\centering
		\subfigure[Zeroth moments]{\includegraphics[width=0.45\textwidth,height=0.35\textwidth]{{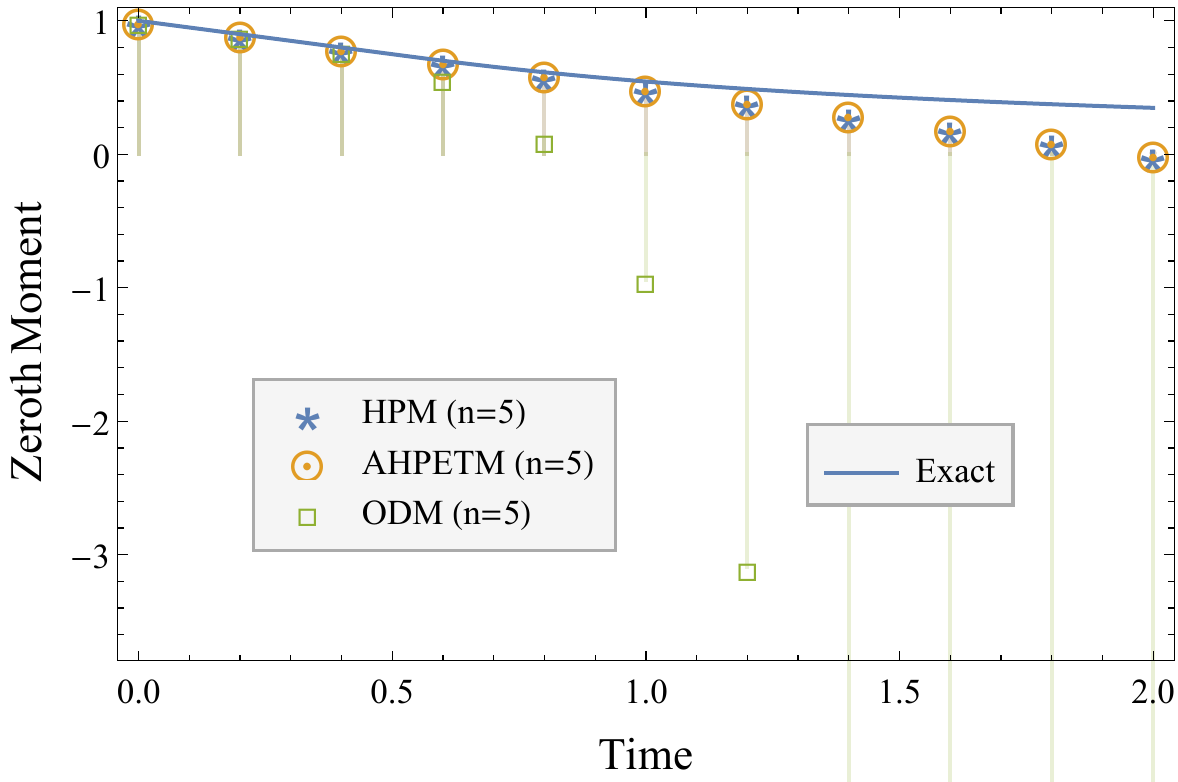}}}
	 		\subfigure[Second moments]{\includegraphics[width=0.45\textwidth,height=0.35\textwidth]{{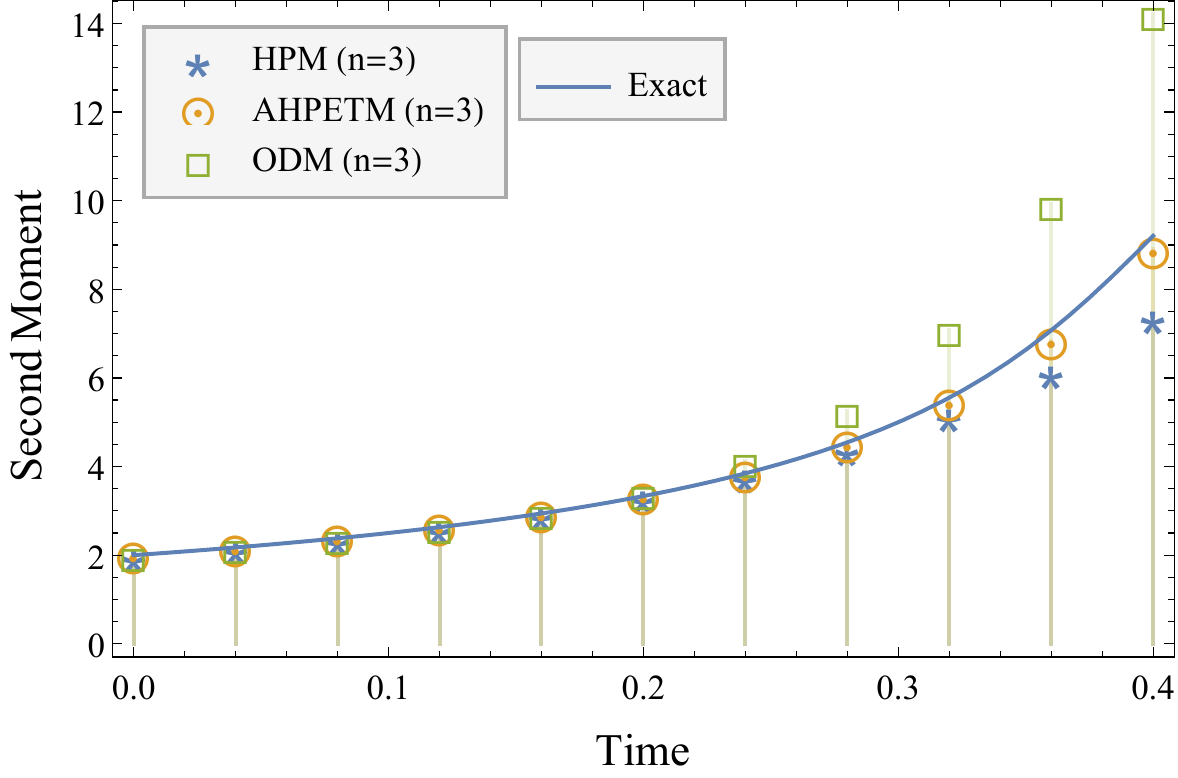}}}
	
	\caption{Zeroth and second moments}
	\label{fig9}
	\end{figure}\\
Figure \ref{fig7} contrasts the error between the exact and truncated solutions obtained via HAM/HPM/ADM, ODM, and AHPETM. The figure demonstrates that the AHPETM outperforms both HPM and ODM results. Figure \ref{fig8} further indicates the scheme's superiority as the unexpected behavior of the HPM and ODM solutions are noticed, where as the AHPETM offers better estimates of the exact solution. Figure \ref{fig9} continues by contrasting the analytical moments with the approximated moments. In situations where the HPM and AHPETM moments are almost identical and closer to exact moments, as seen in the prior occurrences, ODM moments explode.
\par From the above illustrations, it can be seen that in all contexts, AHPETM performs better than ADM, HPM, HAM, and ODM. Therefore, due to the novelty of the proposed scheme, we use AHPETM to solve the more complex models such as coupled aggregation breakage and bivariate aggregation equations.
\subsection{Coupled Aggregation-Breakage Equation}
\begin{example}
Considering the case of constant aggregation rate $(K(x,y)=1)$, binary breakage $(B(x,y)=2/y)$ with the selection rate $S(x)=\frac{x}{2}$ and for the initial condition $u(x,0)= 4 x e^{-2 x}$, the exact solution  for the problem \eqref{agg_break_eqn} is provided in \cite{lage2002comments}.  
\end{example}
Using the iterations defined in equation \eqref{agg_break_iterations}, components $v_i's$ of the solutions are computed as follows
$$v_0(x,t)= 4 x e^{-2 x}, \quad v_1(x,t)=\frac{1}{3} t e^{-2 x} \left(4 x^3-6 x^2-6 x+3\right),$$
\begin{align*}
v_2(x,t)=&\frac{1}{3780}\bigg(t^2 e^{-2 x} \bigg(8 t x^7-56 t x^6-84 t x^5+840 t x^4-420 t x^3-1260 t x^2+630 t x+504 x^5\\&-2520 x^4-1890 x^3+9450 x^2+945 x-1890\bigg)\bigg).
\end{align*}
\begin{figure}[htb!]
 		\centering
 	\subfigure[Number density  $(n=4)$]{\includegraphics[width=0.45\textwidth,height=0.35\textwidth]{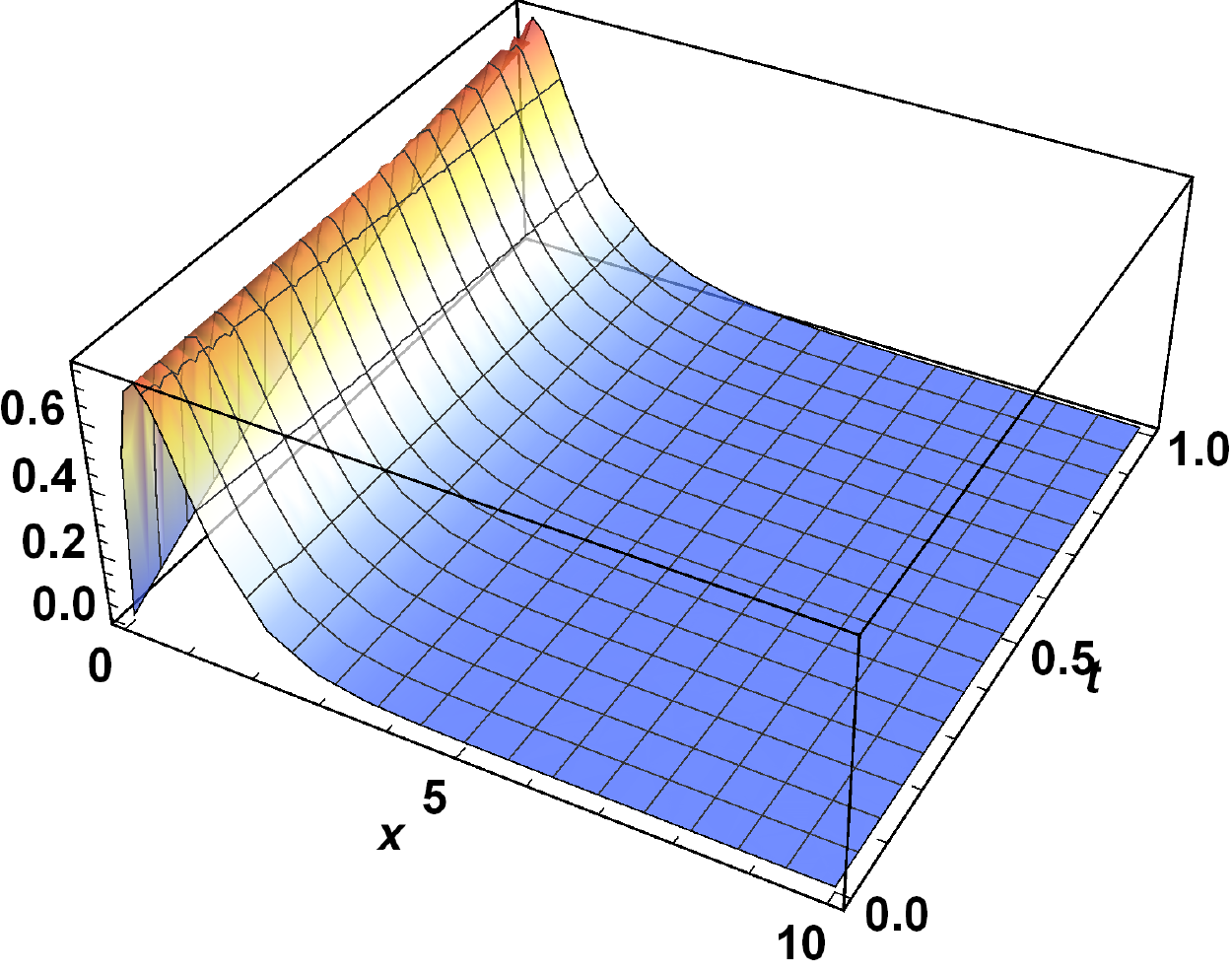}}
 	\subfigure[Time distribution ]{\includegraphics[width=0.45\textwidth,height=0.35\textwidth]{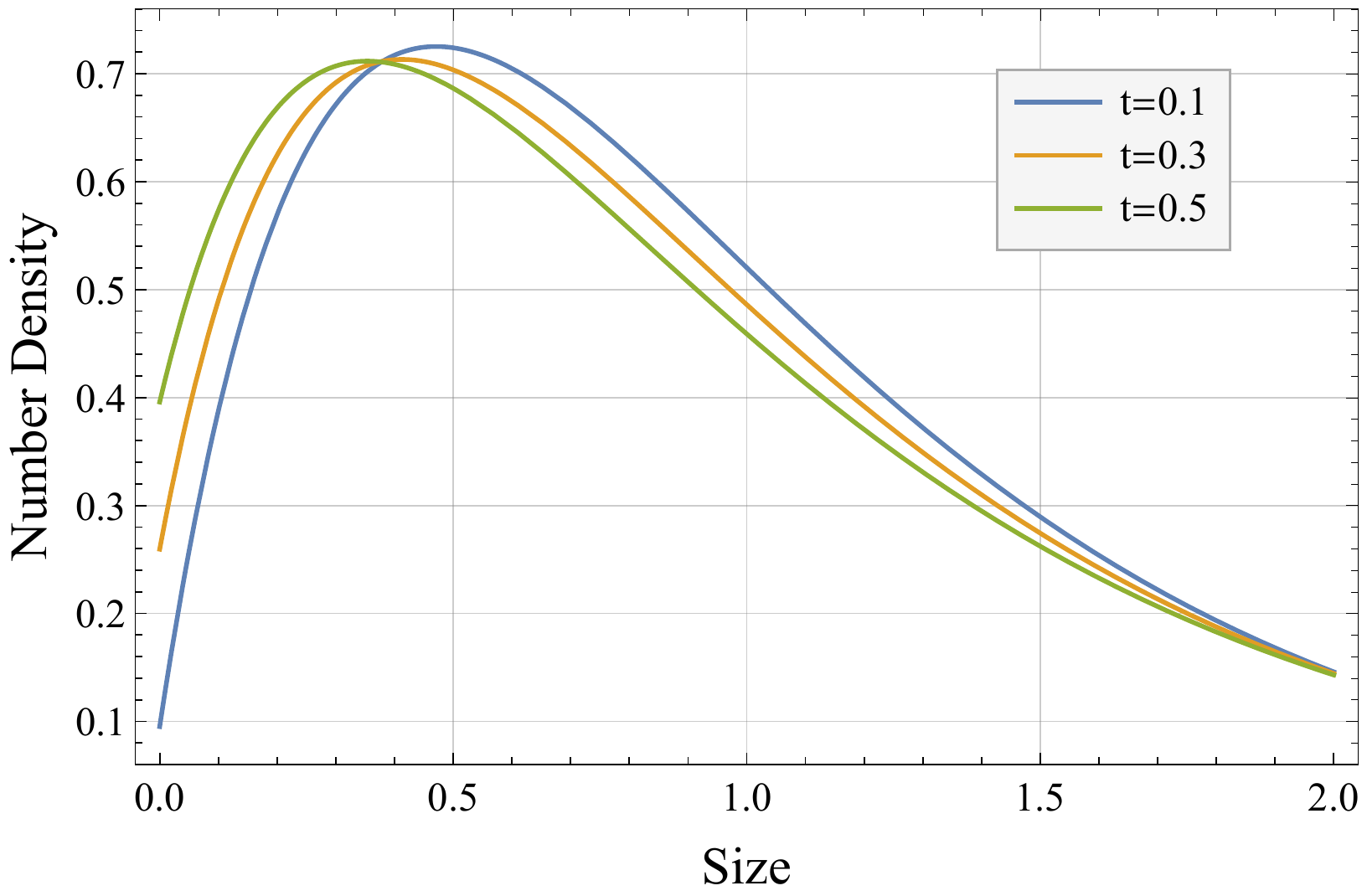}}
\caption{Number density}
\label{fig10}
\end{figure}
\begin{figure}[htb!]
 		\centering
 		\subfigure[Truncated error  ]{\includegraphics[width=0.45\textwidth,height=0.35\textwidth]{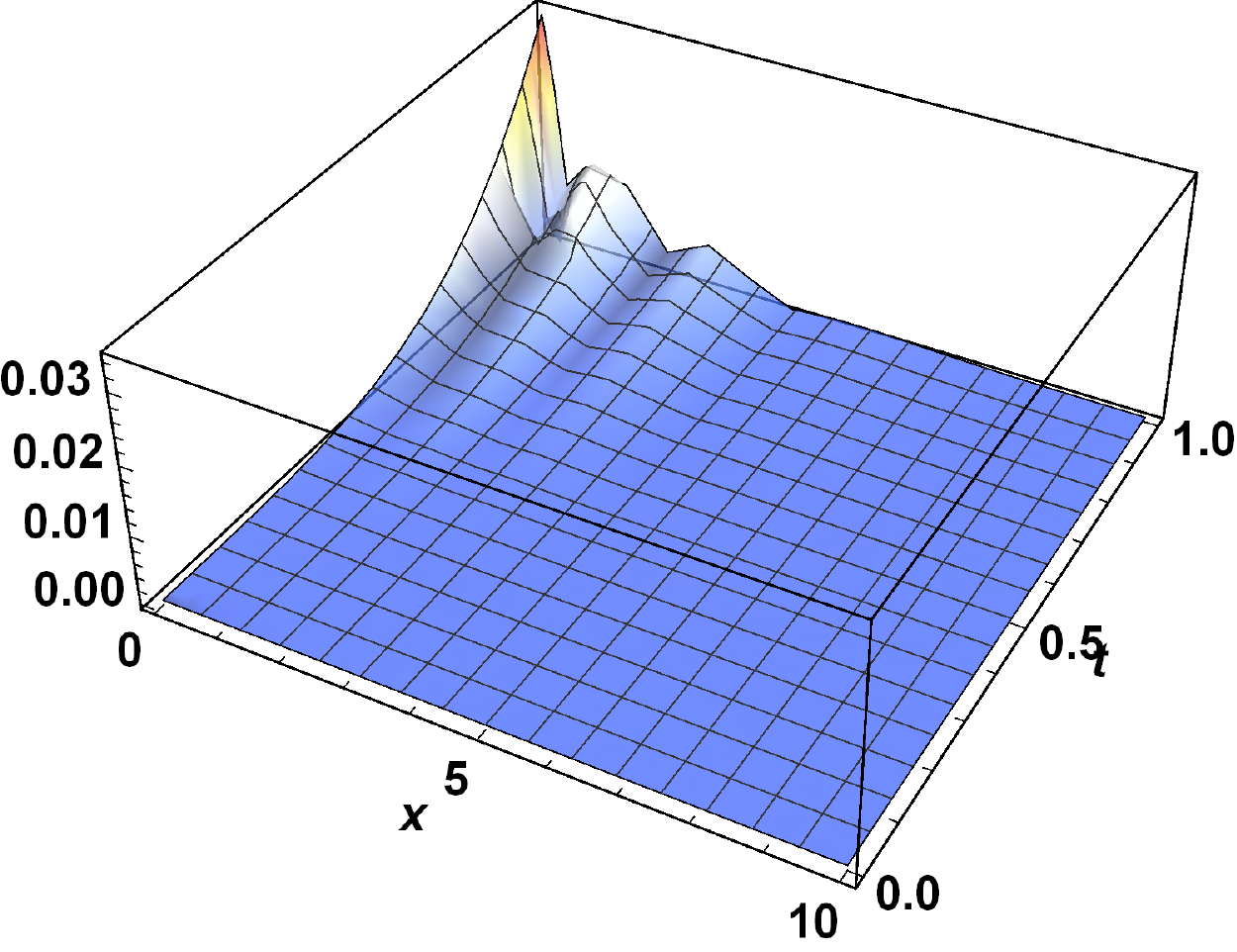}}
 			\subfigure[ Error at $t=0.5$  ]{\includegraphics[width=0.45\textwidth,height=0.35\textwidth]{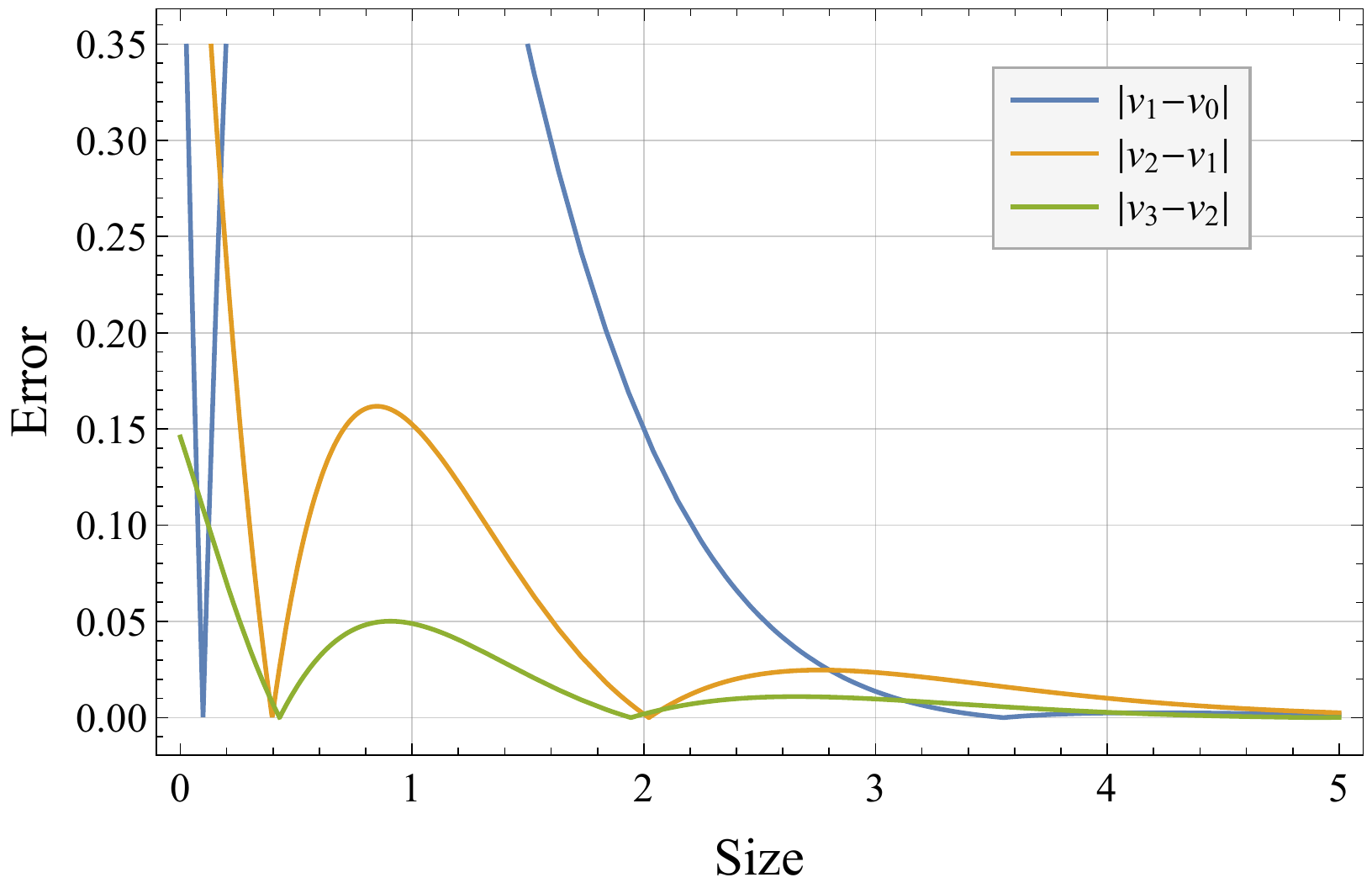}}
 	\subfigure[Zeroth moment]{\includegraphics[width=0.45\textwidth,height=0.35\textwidth]{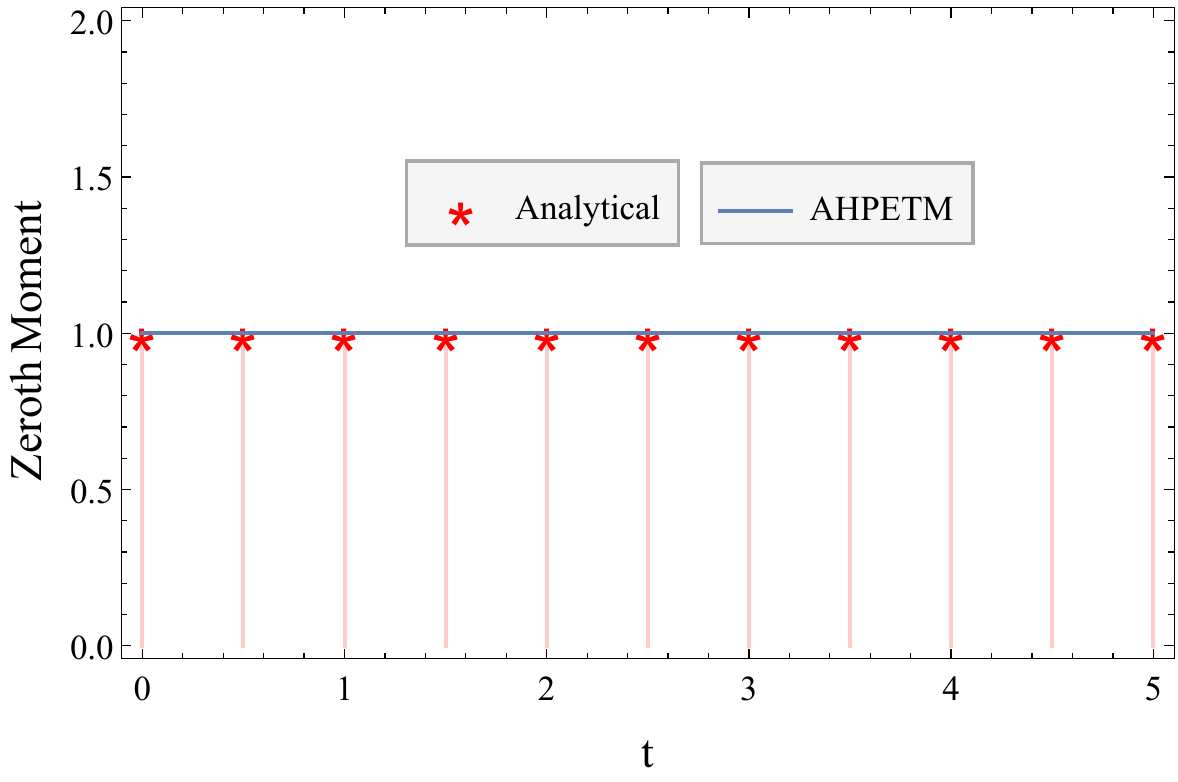}}
\caption{ Error and moment}
\label{fig10a}
\end{figure}
A four-term truncated solution is computed with the aid of "MATHEMATICA". At a specific period $t$, the number density of particles of size $x$ is shown in Figure \ref{fig10}(a). It is observed from Figures \ref{fig10}(a) and (b) that smaller particles tend to increase as time goes on, while larger particles start to fragments into smaller ones. The error between the exact and truncated solutions is presented in Figure \ref{fig10a}(a) and is found to be nearly insignificant. Further, Figure \ref{fig10a}(b) gives the absolute difference between the subsequent components of the series solution and it is clear that the difference between the second and third terms nearly vanishes, which serves as the inspiration for the decision to truncate the solution for three terms. As shown in Figure \ref{fig10a}(c), the truncated solution exhibits steady state behavior for the number of particles as the zeroth moment is constant. This behavior was also analyzed analytically in \cite{lage2002comments}, and thus demonstrating the method's novelty.
\begin{example}
Consider another example of the coupled aggregation-breakage equation \eqref{agg_break_eqn} with the same parameters as taken in Example 6.4 but with selection rate $S(x)=2x $ and initial condition $u(x,0)= 32 x e^{-4 x}$. Similar to the previous case, here as well, steady state behavior of zeroth moment was studied in \cite{lage2002comments}.
\end{example}
Thanks to the formula \eqref{agg_break_iterations}, three terms of the truncated solution are computed as 
$$v_0(x,t)=32 x e^{-4 x}, \quad  v_1(x,t)=\frac{8}{3} t e^{-4 x} \left(32 x^3-24 x^2-12 x+3\right),$$
\begin{align*}
v_2(x,t)=&\frac{8}{945} t^2 e^{-4 x} \bigg(1024 t x^7-3584 t x^6-2688 t x^5+13440 t x^4-3360 t x^3-5040 t x^2+1260 t x\\&+8064 x^5-20160 x^4-7560 x^3+18900 x^2+945 x-945\bigg).
\end{align*}
\begin{figure}[htb!]
 		\centering
 	\subfigure[Number density  $(n=4)$]{\includegraphics[width=0.45\textwidth,height=0.35\textwidth]{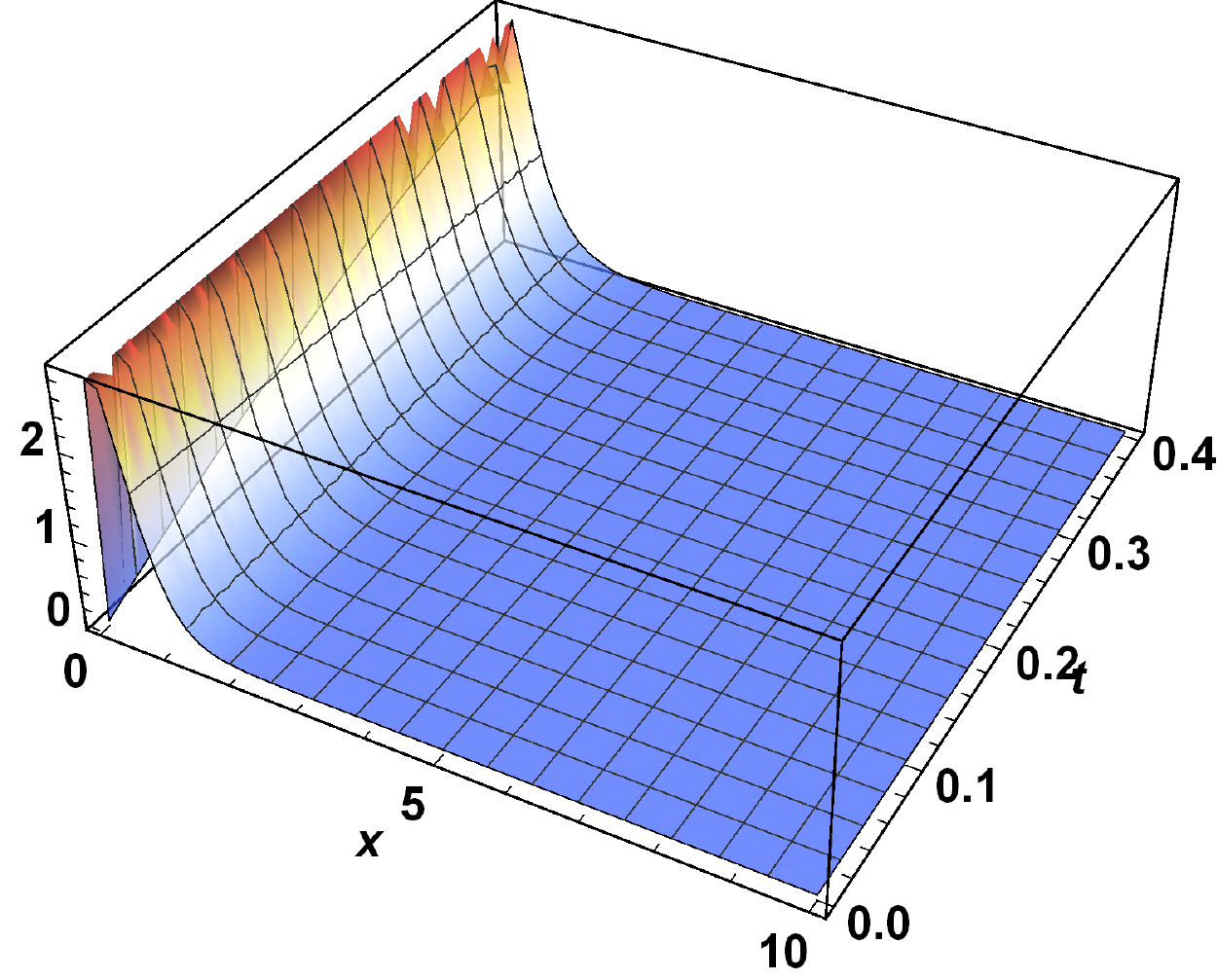}}
 	\subfigure[Time distribution ]{\includegraphics[width=0.45\textwidth,height=0.35\textwidth]{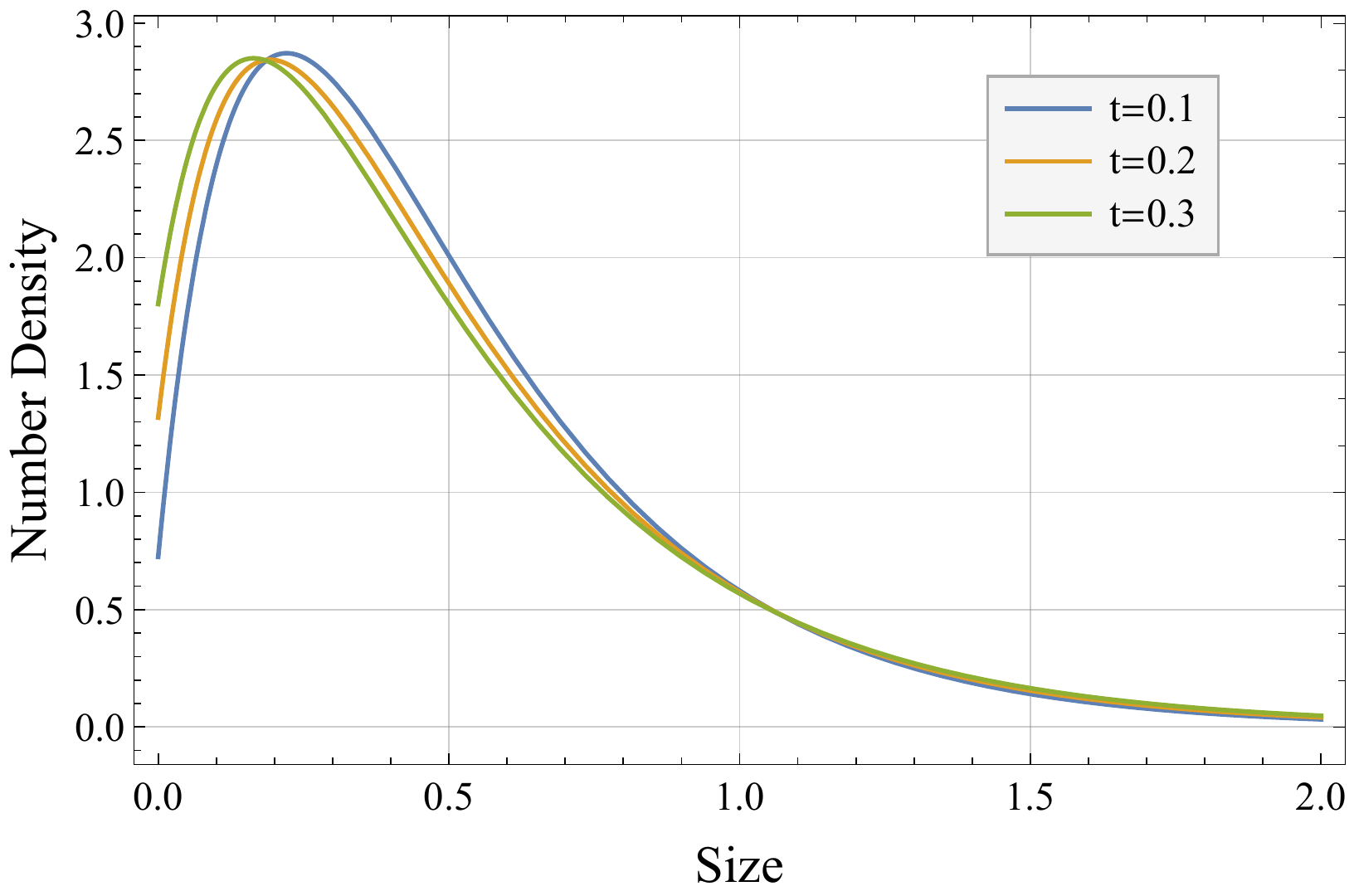}}
\caption{Number density}
\label{fig11}
\end{figure}
\begin{figure}[htb!]
 		\subfigure[Truncated error  ]{\includegraphics[width=0.45\textwidth,height=0.35\textwidth]{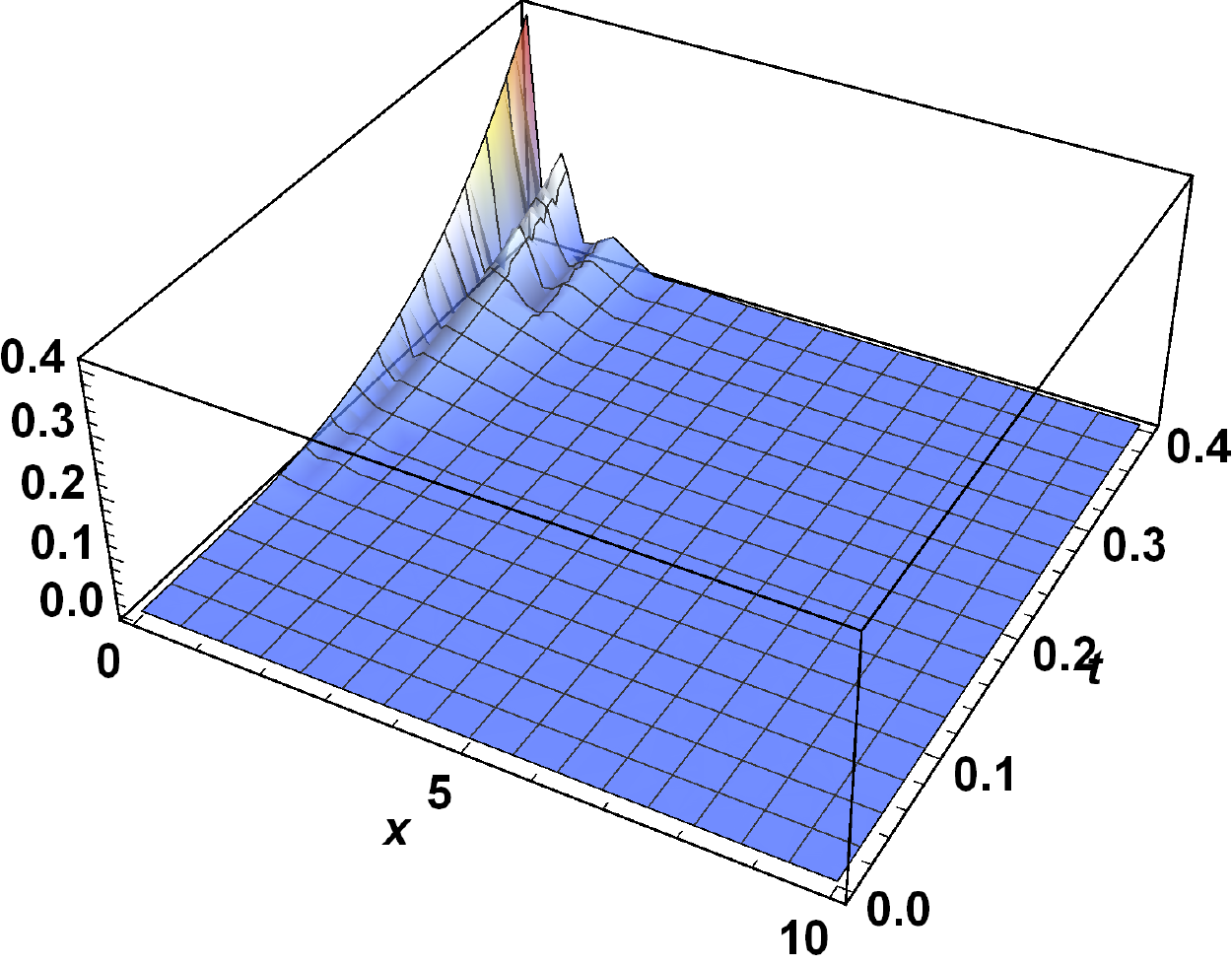}}
 			\subfigure[ Error at $t=0.5$  ]{\includegraphics[width=0.45\textwidth,height=0.35\textwidth]{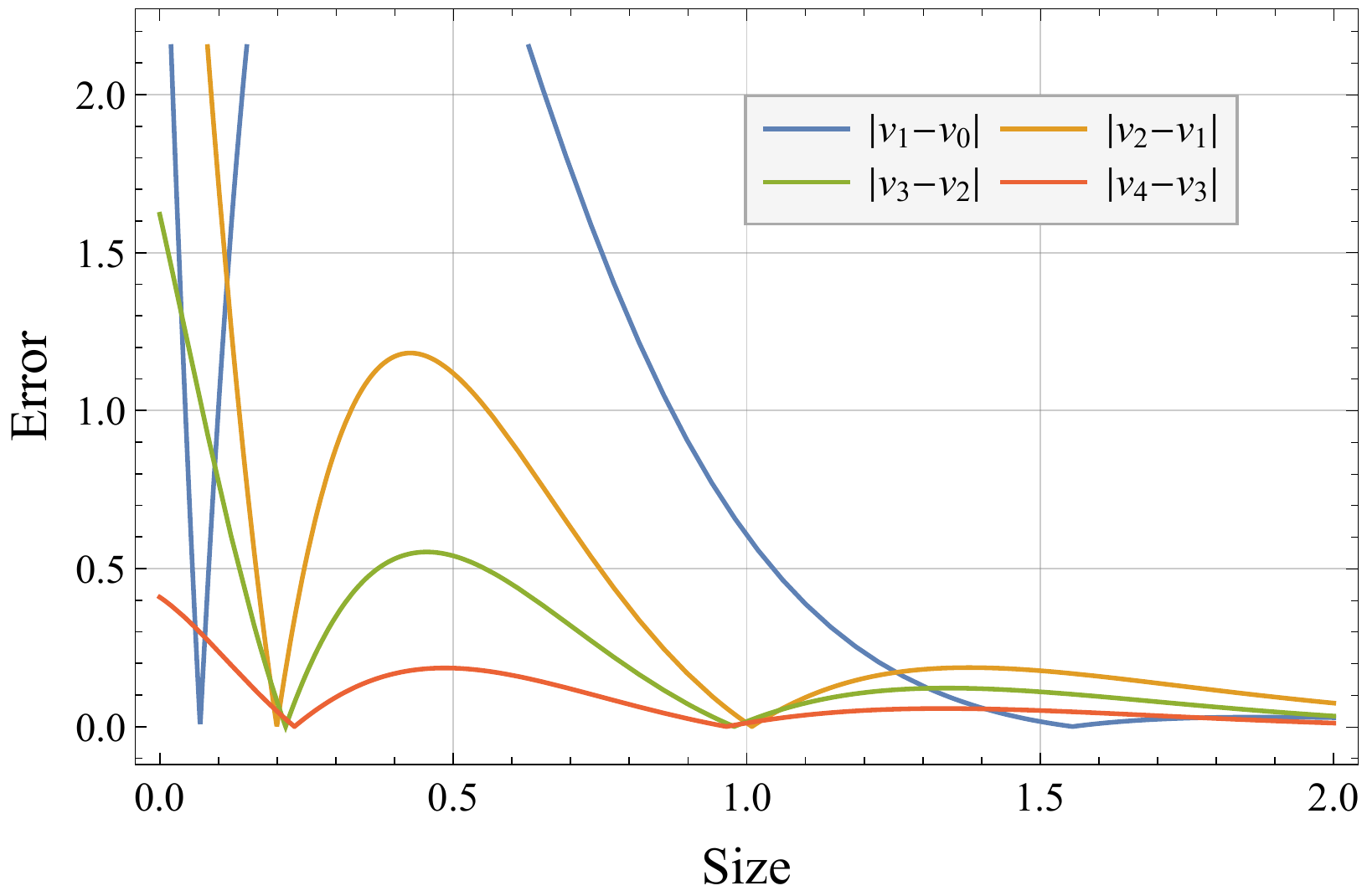}}
 	\subfigure[Zeroth moment]{\includegraphics[width=0.45\textwidth,height=0.35\textwidth]{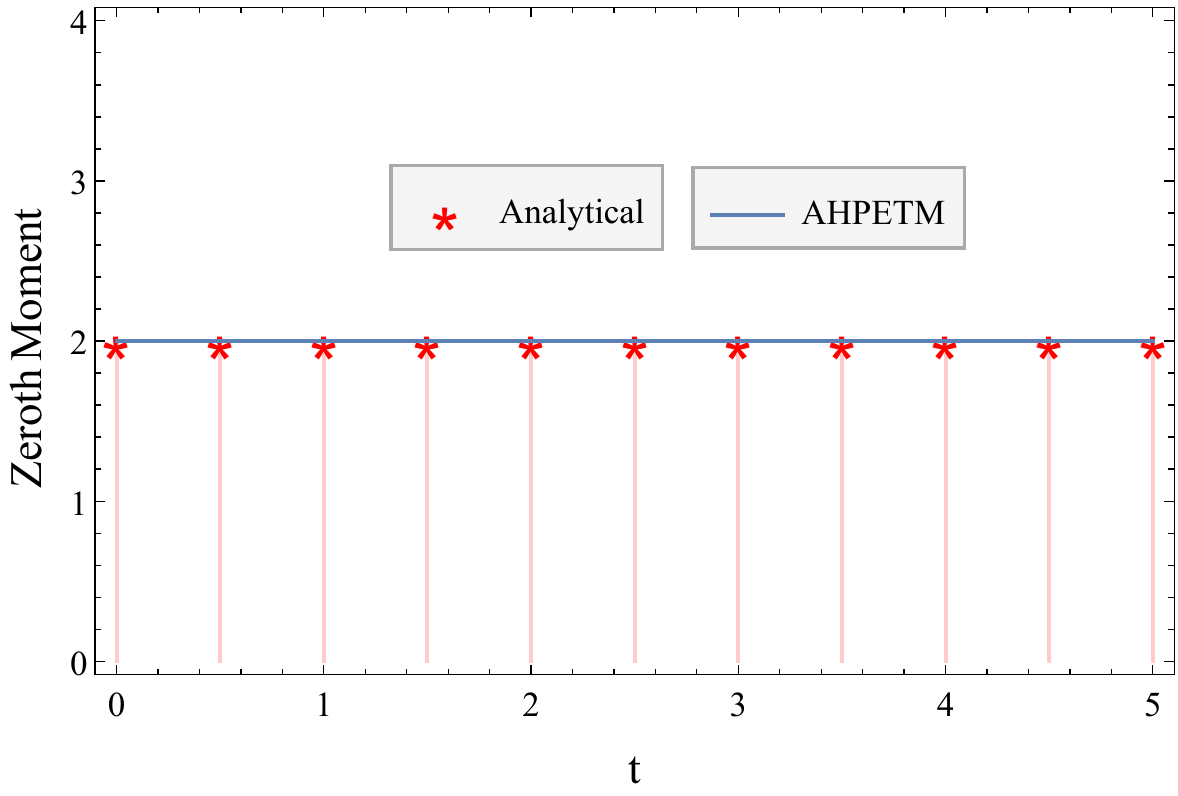}}
\caption{ Error and moment}
\label{fig11a}
\end{figure}
Due to the complexity involved in the terms, a four-term truncated solution is considered. The number density of particle size $x$ in the system is presented in Figure \ref{fig11}(a). Further, Figure \ref{fig11}(b) presents the concentration of particles at different time levels, and an increment in smaller particles is encountered, where as larger particles start breaking as time increases. In Figures \ref{fig11a}(a) and \ref{fig11a}(b), the difference between the consecutive terms is presented, and the error between the second and third terms seems to be vanishing, which leads us to truncate the solution for three terms. As expected, in Figure \ref{fig11a}(c) AHPETM shows the steady-state nature of zeroth moment and is exactly matching with the precise total number of particles.
\subsection{Bivariate Aggregation Equation}
\begin{example}
Let us take two-dimensional aggregation equation \eqref{2DPBE} with the constant aggregation kernel $K(x,x',y,y')=1$ and the initial condition $u(x,y,0)=\frac{16 N_0 x y }{m_1^2 m_2^2} \exp \{-\frac{2 x}{m_1}-\frac{2 y}{m_2}\}$ where the parameters and the exact solution are given in Table \ref{1}. For more details, readers may refer to \cite{singh2020discrete}.  
\end{example}
\begin{table}[h]
\caption{Parameters and exact solution}
\begin{center}
\begin{tabular}{c c } \toprule
$N_0$, $p_1$, $p_2$ & 1 \\\\
$m_1$, $m_2$ & 0.04 \\\\
$u(x,t)$ & $\frac{\left(4 N_0\right)}{\left(m_1 m_2\right) (t+2)^2} \left(\left(p_1+1\right){}^{p_1+1} \left(p_1+1\right){}^{p_2+1}\right) \exp \left(-\frac{\left(p_1+1\right) x}{m_1}-\frac{\left(p_2+1\right) y}{m_2}\right)$ \\ & $\sum _{k=0}^{\infty } \frac{\left(\frac{t}{t+2}\right)^k \left(\left(\left(p_1+1\right){}^{p_1+1}\right){}^k \left(\left(p_2+1\right){}^{p_2+1}\right){}^k \left(\frac{x}{m_1}\right){}^{(k+1) \left(p_1+1\right)-1} \left(\frac{y}{m_2}\right){}^{(k+1) \left(p_2+1\right)-1}\right)}{\Gamma \left(\left(p_1+1\right) (k+1)\right) \Gamma \left(\left(p_2+1\right) (k+1)\right)}$ \\
      \bottomrule
\end{tabular}
\end{center}
\label{1}
\end{table}
The first three elements of the series solution are provided below using the iterations specified in equation \eqref{2dagg_iterations}
$$v_0(x,y,t)= u_0(x,y,0), \quad v_1(x,y,t)=5.42535\times 10^{11} t x y e^{-50 x-50 y} \left( x^2 y^2-0.1152 \times 10^{-4}\right)$$
\begin{align*}
v_2(x,y,t)=& 3.10441\times 10^{-10} x y e^{-50 x-50 y} \bigg(8.06248\times 10^{27} t^3 x^6 y^6-9.10222\times 10^{24} t^3 x^4 y^4\\&+8.73813\times 10^{20} t^3 x^2 y^2-3.35544\times 10^{15} t^3+1.36533\times 10^{25} t^2 x^4 y^4-2.62144\times 10^{21} t^2 x^2 y^2\\& +1.50995\times 10^{16} t^2+6 t\bigg).
\end{align*}
\begin{figure}[htb!]
 		\centering
 	\subfigure[Number density  $(n=4)$]{\includegraphics[width=0.45\textwidth,height=0.35\textwidth]{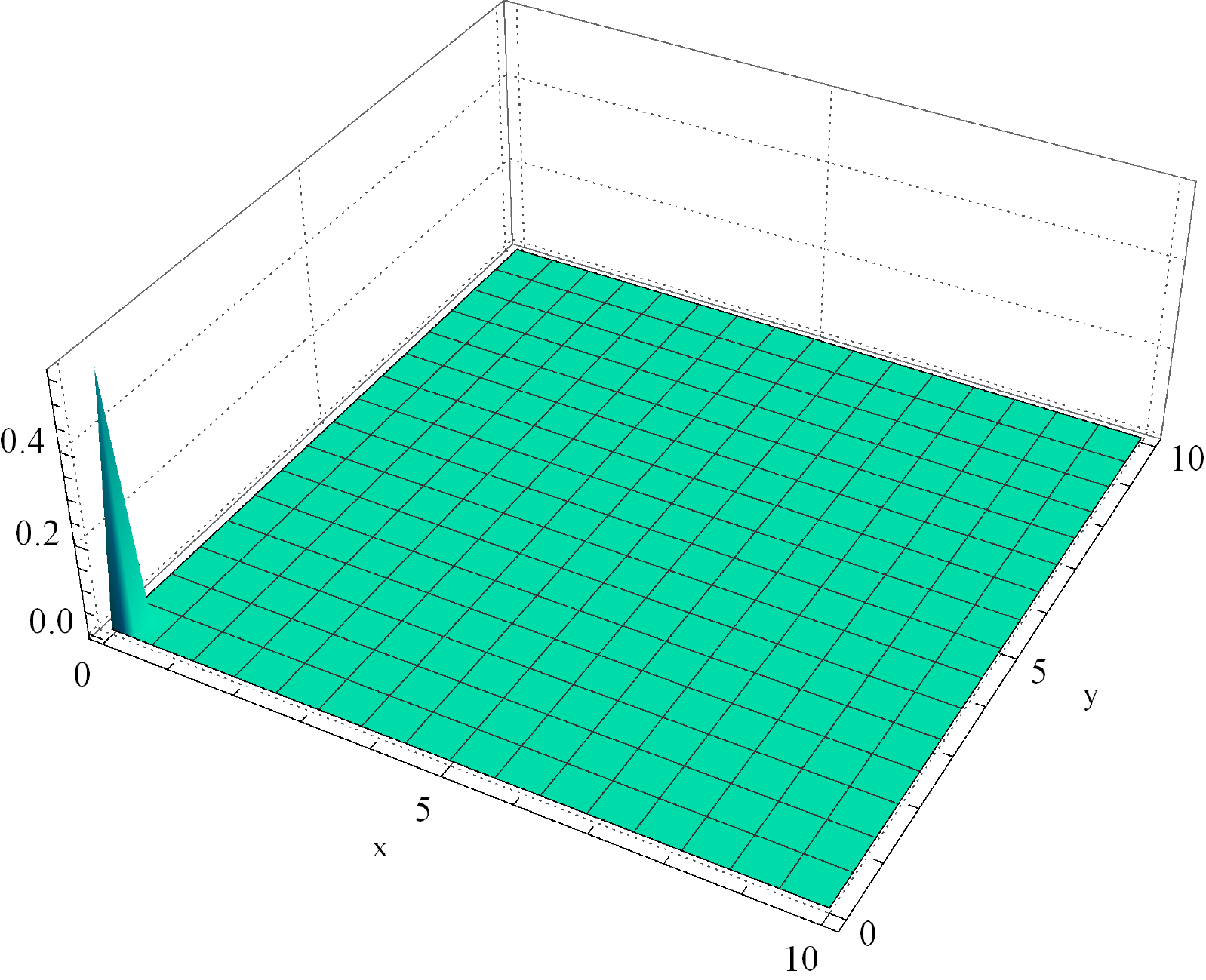}}
 	\subfigure[Truncated error ]{\includegraphics[width=0.45\textwidth,height=0.35\textwidth]{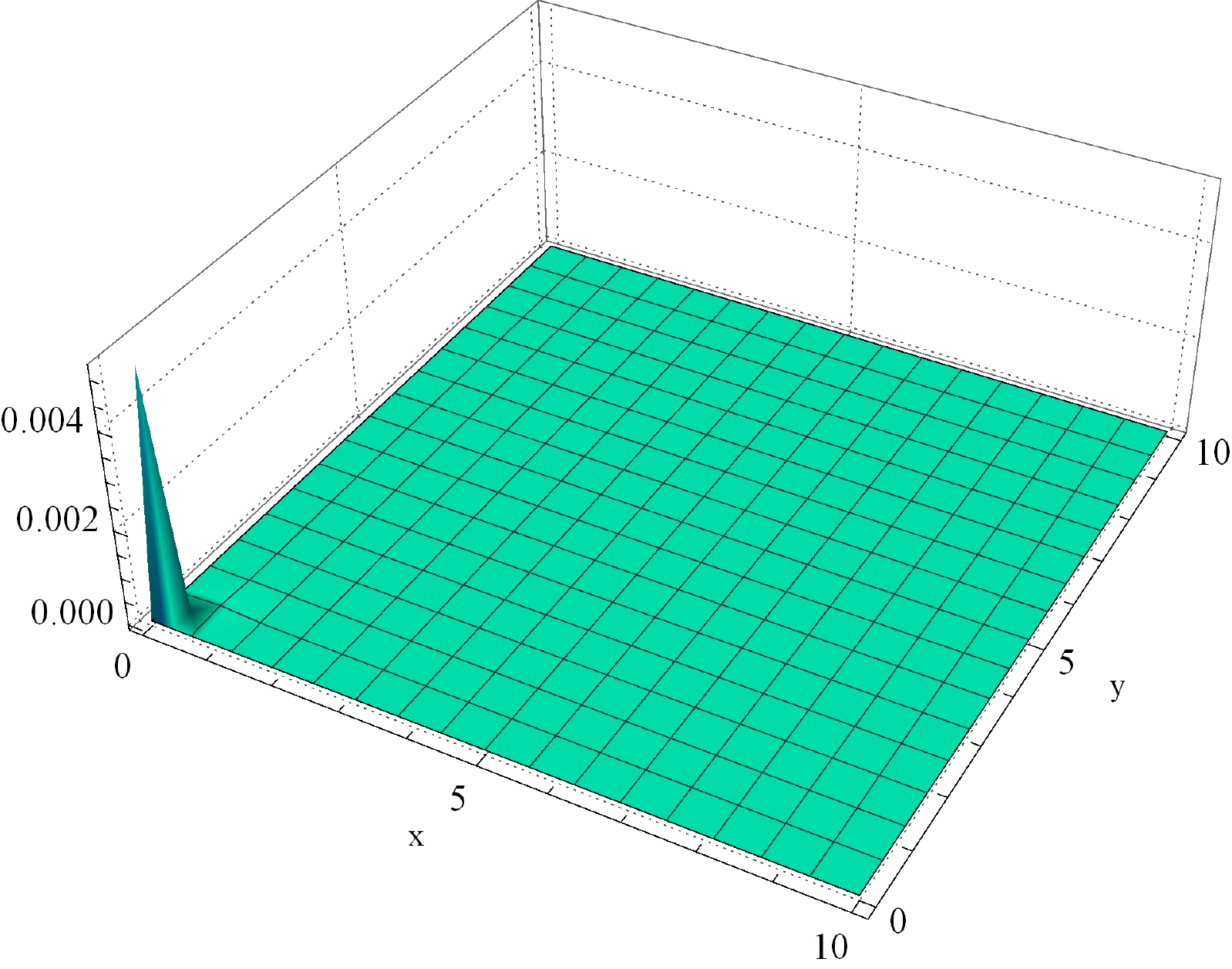}}
 		\subfigure[$|\Phi_2(x,y,t)-u(x,y,t)|$  ]{\includegraphics[width=0.45\textwidth,height=0.35\textwidth]{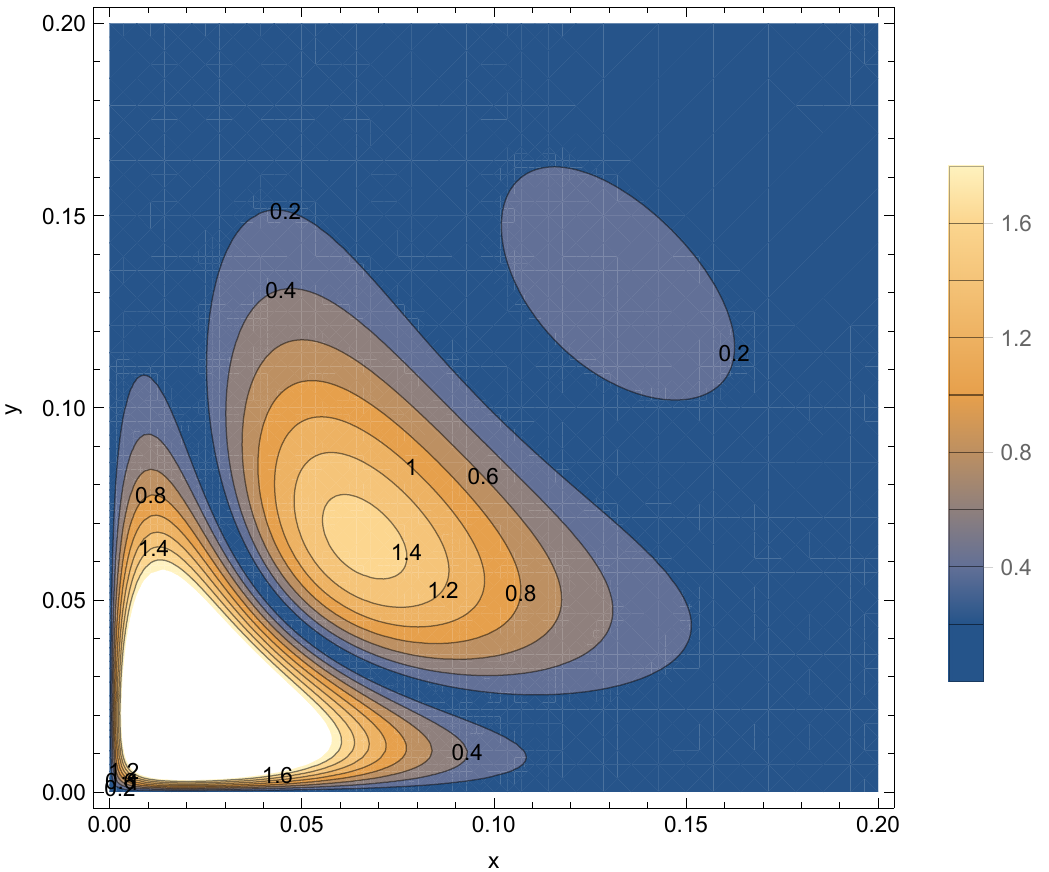}}
 			\subfigure[ $|\Phi_3(x,y,t)-u(x,y,t)|$   ]{\includegraphics[width=0.45\textwidth,height=0.35\textwidth]{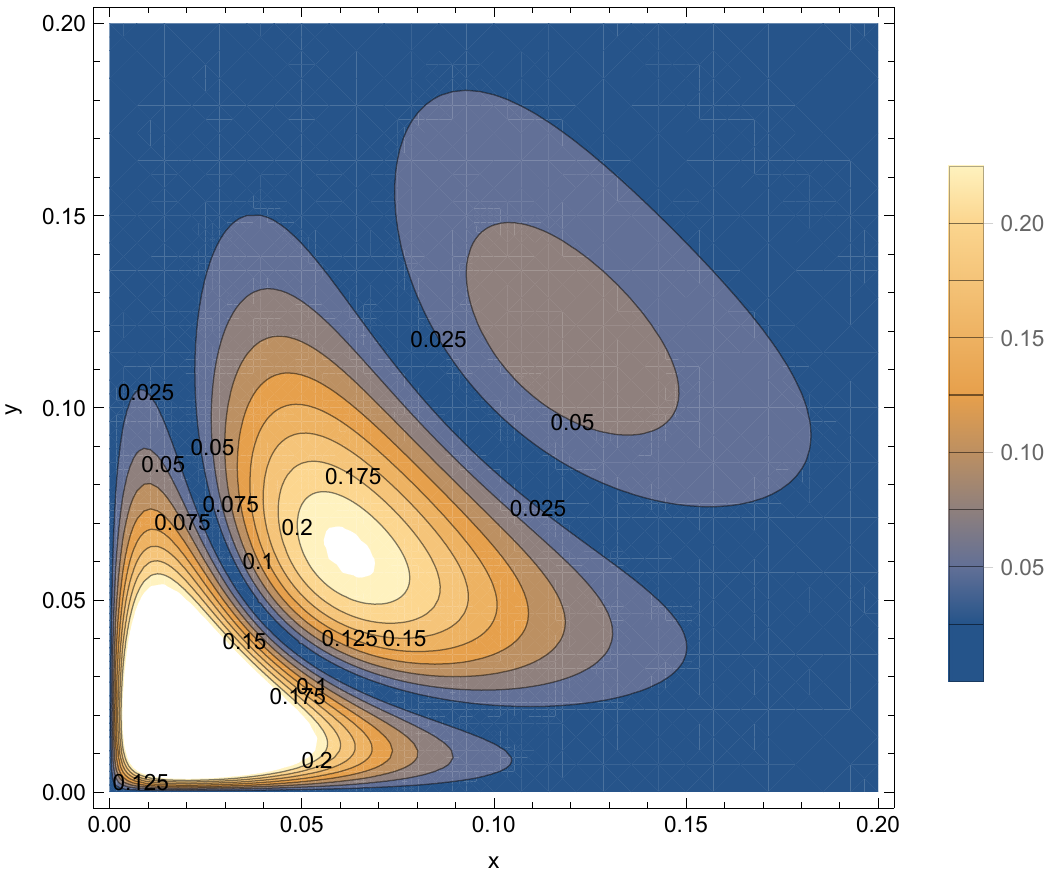}}
 				\subfigure[ $|\Phi_4(x,y,t)-u(x,y,t)|$   ]{\includegraphics[width=0.45\textwidth,height=0.35\textwidth]{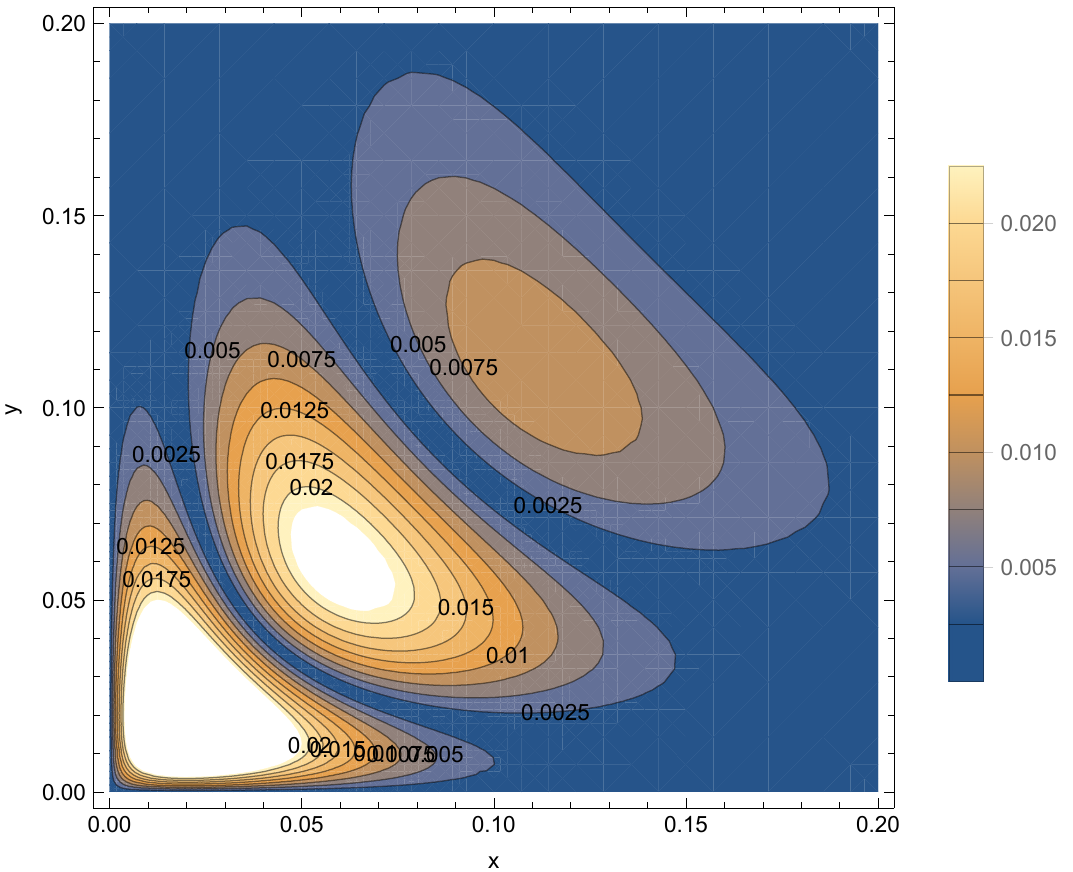}}
 	\subfigure[ Moments]{\includegraphics[width=0.45\textwidth,height=0.35\textwidth]{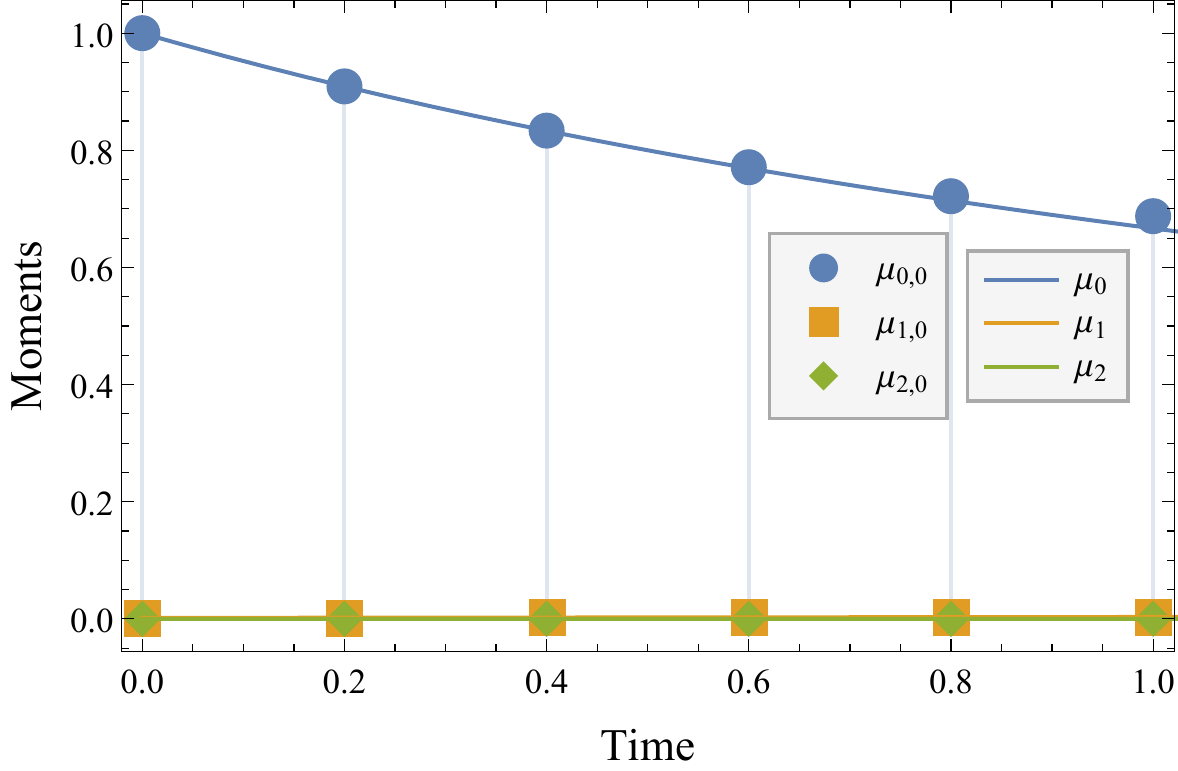}}
\caption{Number density, error and moments}
\label{fig12}
\end{figure}
Continuing in a similar pattern, a four-term truncated solution is computed and compared with the exact solution. Figure \ref{fig12}(a) gives the number density at time $t=0.4$ and it is marked that larger particles almost disappear, and microscopic particles dominate the system. A minimal error is seen between the exact and truncated solutions, according to the error curve shown in Figure \ref{fig12}(b). In addition to this, Figures \ref{fig12}(c)- \ref{fig12}(e) present the contour plots of the errors by taking two, three, and four terms truncated series solutions. One can observe that as the number of terms increases, the error reduces significantly. Finally, Figure \ref{fig12}(f) shows that the approximated moments, namely $\mu_{0,0},\mu_{1,0},\mu_{2,0}$, provide a great agreement with the corresponding exact moments.
	\section{Concluding remarks}
	This study employed AHPETM to solve the fragmentation, multi-dimensional coagulation, and linked aggregation-fragmentation equations. Due to the complexity in the models, convergence analysis were discussed for fragmentation and multi-dimensional aggregation equations considering the constant kernels. With the help of MATHEMATICA, this article also contained the detailed numerical investigations for each of the predefined models. It was observed that, for pure fragmentation equation which is linear, all the schemes offered the same results. However, for non-linear aggregation equation, AHPETM significantly outperformed the results of ADM, HAM, HPM and ODM even after a lengthy period of time. This justified the method's reliability and applicability. AHPETM was also designed to solve non-linear 2-D aggregation and combined aggregation-fragmentation equations due to the accuracy and efficiency observed in the pure aggregation equation and remarkable results were obtained in each case.			
				\clearpage
         						\bibliography{Ref}
         						\bibliographystyle{iEEEtran}
	\end{document}